\def\diag{\mathrm{diag}}
\def\us{u^{*}}
\def\RR{\mathbb R}
\def\NN{\mathbb N}
\def\eps{\varepsilon}
\def\mm{\mathcal{M}}
\def\nn{\mathcal{N}}
\def\jj{\mathcal{J}}
\def\diam{\mathrm{diam}}
\def\d{\mathrm{d}}
\def\de{\partial}
\def\remspace{\kern-0.5em}
\def\half{\frac{1}{2}}
\newcommand{\R}{\mathbb{R}}
\newcommand{\mF}{\mathbf{F}}
\newcommand{\ihalf}{{i+1/2}}
\newcommand{\imhalf}{{i-1/2}}
\newcommand{\partialt}[1]{\dfrac{\partial#1}{\partial t}}
\newcommand{\partialb}[1]{\dfrac{\partial#1}{\partial x_\beta}}
\newcommand{\ddt}{\frac{\mathrm{d}}{\mathrm{d}t}}
\newcommand{\fpartial}[1]{\dfrac{\partial}{\partial #1}}
\newcommand\ds{\displaystyle}
\newcommand{\nm}[1]{\left\| #1 \right\|}
\newcommand{\quot}[1]{``#1''}
\newcommand{\ms}{\medskip}
\newcommand{\amatrix}{\mathcal{A}}
\newcommand{\drift}{\mathcal{B}}
\newlength\eqnspace
\newlength\alnspace
\theoremstyle{definition}
\newtheorem{defi}{Definition}[section]
\newtheorem{rem}{Remark}
\newtheorem*{rem*}{Remark}
\theoremstyle{plain}
\newtheorem{lem}{Lemma}[section]
\newtheorem{prop}[lem]{Proposition}
\newtheorem{theor}[lem]{Theorem}
\begin{document}

\title{Trend to equilibrium for systems with small cross-diffusion}
\author{Luca Alasio$^1$}
\address[$^1$]{Gran Sasso Science Institute, Viale Francesco Crispi 7, L'Aquila, 67100, Italy}
\author{Helene Ranetbauer$^2$}
\address[$^2$]{University of Vienna, Oskar-Morgenstern-Platz 1, 1090 Vienna, Austria}
\author{Markus Schmidtchen$^3$}
\address[$^3$]{Laboratoire Jacques-Louis Lions, Sorbonne-Universit\'e, 4 place Jussieu, 75252 Paris}
\author{Marie-Therese Wolfram$^4$}
\address[$^4$]{Mathematics Institute, University of Warwick, Gibbet Hill Road, CV47AL Coventry, UK}
\address{\quad\, Radon Institute for Computational and Applied Mathematics, Altenbergerstr. 69, 4040 Linz, Austria.}

\date{}

\subjclass{35B40, 35B45, 35K51, 65N08}
\keywords{cross-diffusion systems, asymptotic behaviour}

\begin{abstract} 
This paper presents new analytical results for a class of nonlinear parabolic systems of partial different equations with small cross-diffusion which describe the macroscopic dynamics of a variety of large systems of interacting particles. Under suitable assumptions, we prove existence of classical solutions and we show exponential convergence in time to the stationary state.
Furthermore, we consider the special case of one mobile and one immobile species, for which the system reduces to a nonlinear equation of Fokker-Planck type. In this framework, we improve the convergence result obtained for the general system and we derive sharper $L^{\infty}$-bounds for the solutions in two spatial dimensions. We conclude by illustrating the behaviour of solutions with numerical experiments in one and two spatial dimensions.
\end{abstract}

\maketitle

\section{Introduction}

\subsection{Background and motivation}

In this paper we focus on a class of nonlinear cross-diffusion systems with sufficiently small off-diagonal diffusion terms. The smallness assumption ensures that the system we consider is \quot{close}, in a suitable sense, to a linear, decoupled system. This allows us to adapt techniques developed in the theory of linear, parabolic systems of partial differential equations (PDEs) in order to study long-time behaviour of solutions of the nonlinear system. 
More specifically, we consider a parabolic system of PDEs of the form

\begin{equation}\label{main0}
    \partialt u 
    -\nabla \cdot \left\{
    D(x) [ (I + \delta \Phi(u)) \nabla u 
    + (\diag(\nabla V) + \delta \Psi(u) ) u ]
    \right\}
    =
    0,
\end{equation}
which is a compact notation for the system
\begin{equation}
    \ds \partialt {u_{i}} - \ds \sum_{\alpha, \beta} \fpartial {x_\alpha} 
    \left\{ D_{i}^{\alpha\beta}(x) 
    \left[ \left(
    \partialb {u_{i}} + \delta \sum_j \Phi_{ij}(u)\partialb {u_{j}}
    \right)
    +
    \left(\partialb {V_{i}} u_i 
    + \delta \sum_j \Psi_{ij}(u)u_{j} \right)
    \right] \right\}  =  0,
\end{equation}
for $1\leq i, j \leq m,$ and $1\leq \alpha, \beta \leq d$. Here $m\geq 1$ represents the number of components (or species) and $d\in\{1,2,3\}$ the number of space dimensions. We denote by $u_i$ the $i$-th species. We will specify the assumptions on the diffusion tensor $D$, nonlinear mobilities  $\Phi$ and $\Psi$ and the potential $V$ in Section 
\ref{sec:model}.

Cross-diffusion systems arise in multiple contexts in Physics, Life Sciences and Social Sciences; in particular, they have been derived as formal macroscopic limits of several microscopic models describing multi-species systems in the presence of finite volume effects, size exclusion or joint population pressures
(see, for example, \cite{Bruna:2012cg, Burger:2010gb, perthame2015parabolic, simpson2009multi}). Models with finite volume effects, which ensure physical bounds on the density, received significant attention in the past years. Recently Gavish et al. derived a mean-field model for a one-dimensional hard rod system rigorously, see \cite{gavish2019large}. However these techniques can  be used in 1D only; in higher space dimensions only approximate models have been developed so far. These mean field models share common features, such as degenerate diffusion and small cross-diffusion terms (see, e.g., Examples 1.1-1.3 in \cite{alasio2018stability}). They can be derived from a lattice based microscopic description, see, for instance, \cite{bodnar2005} and a  subsequent formal Taylor expansion of the associated master equation, as considered in \cite{burger2012nonlinear, burger2016}. Another derivation was performed in \cite{Bruna:2012cg, Bruna:2012wu}, where the authors derived a cross-diffusion system from an underlying stochastic microscopic representation using the method of matched asymptotic expansions. We highlight that the two approaches yield different continuum models, however, they share the property of having small cross-diffusion terms. 
The smallness assumption is justified since the cross-diffusion terms are of the same order of magnitude as the microscopic particle size.

\subsection{Gradient flow techniques}

In recent years, gradient flow methods have been successfully employed to study certain families of cross-diffusion systems, see, among others, \cite{jungel2016entropy, difrancesco2018nonlinear, desvillettes2015entropic}.
Techniques such as the boundedness-by-entropy principle introduced in \cite{jungel2015boundedness}, provide a mathematical framework to ensure existence and uniqueness of solutions to general nonlinear cross-diffusion systems that exhibit a gradient flow structure. In general, entropy methods have been proven to be a very useful tool to analyse the long time behavior of evolution equations. Especially the Bakry-Emery strategy (see \cite{bakry1985}) provides the necessary convex Sobolev inequalities to quantify the trend to equilibrium. Then entropy-entropy dissipation estimates allow to deduce exponential decay rates for general classes of linear and nonlinear scalar evolution equations, see for example \cite{arnold2001convex}.
The cross-diffusion systems mentioned above lack a full gradient flow structure (in the Wasserstein sense) in certain parameter ranges, even though the underlying microscopic system possesses a natural one. This lack is caused by approximations made due to the finite volume effects. A first connection between the large deviations of stochastic particle systems and macroscopic Wasserstein gradient flows was established in \cite{adams2011large} without finite volume constraints. Questions related to structural features of cross-diffusion systems to be interpreted as macroscopic gradient flows (as in \cite{zamponi2017analysis}) or to have a strong solution (see \cite{Berendsen2019}) were investigated rather recently.\\
Unfortunately, the results and techniques mentioned above cannot be applied at this stage to the systems of PDEs we want to study, nevertheless in this work we present an alternative strategy that relies on the smallness of the off-diagonal cross-diffusion terms.

\subsection{Numerical methods}

The development of computational methods for non-linear cross-diffusion systems, especially structure preserving schemes, advanced significantly with the recent analytic progress. Structure preserving methods are designed in such a fashion that they preserve important physical and structural features such as positivity, conservation of mass or the dissipation of the associated entropy. Owing to the fact that Wasserstein gradient flows are posed in the set of probability measures, conservation of mass (or probability) of solutions is an important physical feature and finite volume discretisations are a natural framework to guarantee this. In addition, in recent years, several advances have been made in designing flux approximations that are in agreement with the energy dissipation. Bessemoulin-Chatard and Filbet, see \cite{bessemoulin2012finite}, were among the first to present a finite volume method for nonlinear degenerate parabolic equations, which resolved the long-time behaviour correctly. Based on their scheme, different finite volume schemes have been proposed for systems, see for example \cite{CHS18, carrillo2018fvconvergence}. Other numerical approximations are  based on the underlying variational Wasserstein gradient flow structure. These so-called variational schemes are often restricted to one space dimension, as the computational complexity of computing the Wasserstein distance in higher space dimension is significant, see, for instance, \cite{carrillo2016diffeo}. Also, convergence results are, to the best of our knowledge,  restricted to one spatial dimension; cf. \cite{matthes2014}.

\subsection{Summary of the main results}

We consider a family of systems of PDEs with small cross-diffusion terms (namely system \eqref{main0}) for which existence and uniqueness of solutions were investigated in \cite{alasio2018stability}, as recalled in Proposition \ref{prop:ABC}.
We extend the analysis presented in \cite{alasio2018stability} by showing in Theorem \ref{thm:main1} that (under suitable assumptions) the solutions are global-in-time and classical. Furthermore, we provide insights into the equilibration behaviour of such cross-diffusion systems in Theorem \ref{lem:energysys}. A special instance of system \eqref{main0}, to which we shall return in Section \ref{sec:agf}, was analysed in \cite{bruna2017cross} using gradient flow techniques. This reduced PDE (namely problem \eqref{eq:main-equation}) can be interpreted as  the diffusion of hard-core interacting particles through a domain with obstacles distributed according to the given immobile function. We recall existing results for the full cross-diffusion system and its stationary states in Proposition \ref{prop:stationary} and how they can be improved for the reduced PDE at hand. In addition we present a more general equilibration result as well as sharper $L^{\infty}$-bounds for the reduced non-linear PDE in Theorem \ref{thm1} and Theorem \ref{theor:maximum} respectively. We conclude by illustrating our analytic results with numerical simulations. In doing so we construct a finite volume scheme in one and two spatial dimensions, investigate the exponential rate of convergence to equilibrium numerically and illustrate the dynamics of the system in physically relevant scenarios. 
The impact of the immobile species is clearly visible in that the immobile species diminishes the mobility of the mobile species; cf. Section \ref{sec:numerics}.

The rest of paper is organised as follows: we start by recalling well-posedness results for general systems with small cross-diffusion and prove existence of global classical solutions in Section \ref{sec:model}.  Section \ref{sec:expdec} states the main exponential convergence result for the full system. In Section \ref{sec:agf} we consider a specific example of an asymptotic gradient flow system, describing a mobile species diffusing in the presence of an immobile given species. In particular we are able to prove $L^\infty$-bounds for such scalar, nonlinear equations of Fokker-Planck type.
Numerical simulations illustrating the analytic results are presented in Section \ref{sec:numerics}.

\section{Analysis of systems with small cross-diffusion}\label{sec:model}

We start by introducing the cross-diffusion system under consideration and by discussing analytic properties of solutions. We recall existing well-posedness results and establish existence of classic solutions under certain conditions on the initial datum.

\subsection{A well-posedness result}\label{ssec:wellposed}

Throughout the paper $m\geq 1$ represents the number of components and $d\in\{1,2,3\}$ corresponds to the number of space dimensions.
In addition, we shall denote the $i$-th species by $u_i$ and the related initial distribution by $u_i^0\geq 0$.
We call $\nu$ the outward normal of $\Omega$.

For $1\leq i, j \leq m,$ and $1\leq \alpha, \beta \leq d$, we introduce the following notation:
\begin{align}  
    \label{eq:1011}
    \left\{
    \begin{aligned}
        \amatrix_{ij}^{\alpha\beta}(x,u) 
        &=  D_{i}^{\alpha \beta}(x)
        ( I_{ij}
        +\delta \Phi_{ij}(u)
        ),
        \\[\eqnspace]
        \drift_{ij}^{\alpha}(x,u) 
        &=  \sum_\beta
        D_{i}^{\alpha \beta}(x)
        \left( I_{ij} \partialb {V_j(x)}
        +\delta \Psi_{ij}(u)
        \right),
    \end{aligned}
    \right.
\end{align}
where $\delta$ is a small parameter such that 
$$
0 \leq \delta \leq \delta_0,
$$
for a suitable constant $\delta_0$ depending on $u_0$, $D$, $V$, $\Phi$, $\Psi$, and $\Omega$ (see \cite{alasio2018stability} for further details on the smallness assumption).
We consider the following  initial-boundary value problem for system \eqref{main0}:
\begin{align}
    \label{main-general}
    \left\{\;
    \begin{aligned}
	    \ds \partialt {u_{i}} - \ds \sum_{\alpha, \beta, j} \fpartial {x_\alpha} \left[\amatrix_{ij}^{\alpha\beta}(x,u) \partialb {u_{j}} - \drift_{ij}^{\alpha}(x,u)u_{j}\right] & =  0 &\text{in }& \Omega,\\[\eqnspace]
        \sum_{\alpha, \beta, j} \nu_{\alpha} \left[ \amatrix_{ij}^{\alpha\beta}(x,u)\partialb {u_{j}} - \drift_{ij}^{\alpha}(x,u)u_{j}\right] & =  0 &\text{on }& \partial\Omega,\\[\eqnspace]
        u_i(0,\cdot) & = u_i^{0} &\text{in }& \Omega.
    \end{aligned}
    \right.
\end{align}

Note that, due to the no-flux boundary condition, the total \quot{mass} (or number of particles in the microscopic case) of each species is conserved.

Global existence and regularity of solutions for system \eqref{main-general} was proved in \cite{alasio2018stability}. In this framework, it is essential to assume that the system is close to a diagonal, decoupled, linear problem.  Such \quot{reference problem} is given by the weak formulation of 
\begin{align}
    \label{eq:ModelZero}
    \left\{
    \begin{aligned}
	   \ds  \partialt {u_{i}} - \sum_{\alpha, \beta} \fpartial {x_\alpha}
	    \left[
	    D_{i}^{\alpha\beta}(x)
	        \left(\partialb {u_{i}} - \partialb {V_i} u_{i}\right)
	    \right] 
	    &= \ds 0 &\text{in }&\Omega,\\[\eqnspace]
         \sum_{\alpha, \beta} \nu_{\alpha} \cdot \left[
	        D_{i}^{\alpha\beta}(x)
	        \left(\partialb {u_{i}} - \partialb {V_i} u_{i}\right)
        \right]
        &=  0 &\text{on }&\partial\Omega, \\[\eqnspace]
        u_i(0,\cdot) &=  u_i^{0} &\text{in }& \Omega,
    \end{aligned}
    \right.
\end{align}
for $1\leq i \leq m,$ and $1\leq \alpha, \beta \leq d$.\\

We now state our assumptions in detail (notice that they are strictly related to those in \cite{alasio2018stability}).
\begin{enumerate}
\item[\textbf{H1}]
We assume that  $D_{i}^{\alpha\beta}\in C^1(\bar{\Omega})$ is symmetric in the space indices $\alpha$ and $\beta$.
Furthermore, we suppose that there exist two constants $\Lambda \geq \lambda >0$
such that for every $x\in\Omega$, $1\leq i \leq m$, and $\xi\in\mathbb{R}^{d}$,
it holds that
\begin{equation}
    \label{eq:D-Elliptic}
    \lambda\left|\xi\right|^{2}
    \leq \sum_{\alpha,\beta}
    D_{i}^{\alpha \beta}(x)\xi^{\alpha}\xi^{\beta}
    \leq
    \Lambda\left|\xi\right|^{2},
\end{equation}
for any $i=1,\ldots,m$.
\item[\textbf{H2}]
The dependence on $u$ of the nonlinear terms is (at least) of class $C^2$:
\begin{equation}
    \Phi,\Psi\in C^{2}\left(\RR^{m}\right)^{m\times m},  \qquad \text{and} \qquad \Phi(0)=\Psi(0)=0.\label{eq:LipschitzPerturbationAssumption}
\end{equation}

\item[\textbf{H3}]
We assume that, for $i=1,\dots,m$, the potentials $V_i:\RR^d\to\RR$ are (at least) of class $C^2$.

\item[\textbf{H4}] The domain $\Omega$ is bounded, connected and sufficiently smooth (e.g. of class $C^2$). The initial datum  $u^{0}$  in \eqref{main-general} is assumed to be non-negative and to belong to $H^{2}(\Omega)$  satisfying the following compatibility condition on the boundary, $\partial\Omega$, for $i=1,\ldots,m$:
\begin{equation}
    \label{eq:compatcondu_0}
    \sum_{\alpha,j}
    \left[
    \sum_{\beta}
    \amatrix_{ij}^{\alpha\beta}(x,u^0)\partialb {u^0_{j}}
    - 
    \drift_{ij}^{\alpha}(x,u^0)u^0_{j} \right] \nu_\alpha=0.
\end{equation}
\end{enumerate}

Before recalling the well-posedness result from \cite{alasio2018stability} we introduce the following notation.
\begin{defi}\label{def:weaksol}
We denote by  $W(Q_{T})$ the Banach space of functions with two spatial weak derivatives taking values in $L^{2}(\Omega)$ continuously in time, 
and one time derivative in $L^2(0,T;H^1(\Omega))$, that is, 
\begin{equation*}
    W\left(Q_{T}\right)  = C\left(\left[0,T\right];H^{2}(\Omega)\right)
    \cap H^{1}
    \left( 0,T; H^1(\Omega)\right).
\end{equation*}
Note that, given $T>0$, we denote the parabolic cylinder by $Q_{T}=(0,T)\times\Omega$. 
Also, we write $H^2(\Omega)$ for $H^2(\Omega; \mathbb R^m)$, and similarly for other spaces.
\end{defi}

\begin{prop}[See \cite{alasio2018stability}]
\label{prop:ABC} 
Let hypotheses
\textbf{H1}-\textbf{H4}
hold, then system \eqref{main-general} admits a unique, global solution $u\in W(Q_{T})$ for all $T>0$.
Furthermore, there exist constants $\Gamma_0, \Gamma_1 > 0$ independent of $T$ such that:
\begin{equation}
    \nm{u}_{C^0(Q_{T})}
    \leq
    \Gamma_0 \nm{u}_{W(Q_{T})}
    \leq
    \Gamma_1 \nm{u_0}_{H^2(\Omega)}.
\end{equation}
\end{prop}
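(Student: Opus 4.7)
The plan is to treat \eqref{main-general} as a small perturbation of the decoupled linear reference problem \eqref{eq:ModelZero} and to run a fixed-point argument in $W(Q_T)$, exploiting the factor $\delta$ in front of $\Phi$ and $\Psi$, together with the Sobolev embedding $H^2(\Omega)\hookrightarrow C^0(\bar\Omega)$ (valid for $d\le 3$) to produce the first inequality in the conclusion.

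First I would establish maximal $W(Q_T)$-regularity for the reference problem \eqref{eq:ModelZero}. Each scalar equation is uniformly parabolic by \textbf{H1} with $C^1$ coefficients and $C^2$ potential, the no-flux boundary is of standard conormal type, and the initial compatibility follows from \eqref{eq:compatcondu_0} read at $\delta=0$. Classical parabolic theory (Ladyzhenskaya--Solonnikov--Ural'tseva or maximal-regularity for analytic semigroups) then yields a unique $u^{\mathrm{ref}}\in W(Q_T)$ with $\|u^{\mathrm{ref}}\|_{W(Q_T)}\le C\,\|u^{0}\|_{H^2(\Omega)}$ and, crucially, $C$ independent of $T$. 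The time-uniformity rests on the dissipative Fokker--Planck structure of \eqref{eq:ModelZero}: it admits an explicit Gibbs-type stationary state proportional to $e^{-V_i}$ and decays to it exponentially in the associated weighted $L^2$-norm by a Poincar\'e/Bakry--Emery argument, so the energy estimates close uniformly in $T$.

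Given this linear building block, I would define a map $\mathcal{F}$ by freezing the nonlinearities: for $v\in W(Q_T)$, let $\mathcal{F}(v)=u$ solve the linear parabolic system obtained from \eqref{main-general} by replacing $u$ with $v$ inside $\amatrix$ and $\drift$. Since $\Phi,\Psi\in C^2$ with $\Phi(0)=\Psi(0)=0$, one has $\amatrix(\cdot,v)=D+O(\delta\|v\|_{C^0})$ and similarly for $\drift$; for $\delta\le\delta_0$ small enough the frozen coefficients remain uniformly elliptic and are a small perturbation of the reference problem. Hence $\mathcal{F}$ maps a closed ball $\mathcal{B}_R\subset W(Q_T)$ of radius $R\sim\Gamma_1\|u^{0}\|_{H^2(\Omega)}$ into itself. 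A contraction estimate follows by subtracting the equations for $\mathcal{F}(v_1)$ and $\mathcal{F}(v_2)$ and running a standard energy argument in the weaker norm $L^\infty(0,T;L^2)\cap L^2(0,T;H^1)$: the difference of right-hand sides is Lipschitz in $v_1-v_2$ with constant $O(\delta)$, so Banach's fixed-point theorem produces a unique solution, whose $W(Q_T)$-regularity is inherited from the range of $\mathcal{F}$.

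The hard part, and the place where the smallness of $\delta$ really bites, is to keep \emph{every} constant independent of $T$. A naive Gronwall argument would produce factors $e^{CT}$ that blow up as $T\to\infty$; the remedy is to combine the exponential decay of the reference problem with the $O(\delta)$ perturbation bound to absorb the cross-diffusion contributions on the dissipative side of the energy identity rather than treating them as source terms on the right. This is precisely the mechanism that fixes the threshold $\delta_0$ appearing in the statement, with its dependence on $u^{0}$, $D$, $V$, $\Phi$, $\Psi$ and $\Omega$.
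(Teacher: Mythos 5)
The paper itself does not prove Proposition \ref{prop:ABC}: it is imported verbatim from \cite{alasio2018stability}, so there is no internal proof to compare against. Your outline (perturbation of the decoupled reference problem \eqref{eq:ModelZero}, frozen-coefficient fixed point in $W(Q_T)$, smallness of $\delta$ to keep constants $T$-independent) is consistent with the perturbative philosophy the paper attributes to that reference, but as a standalone argument it has concrete gaps.

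First, the compatibility claim is wrong as stated: \eqref{eq:compatcondu_0} is the compatibility condition for the \emph{nonlinear} flux with $\delta>0$, and it does not imply the $\delta=0$ condition you invoke for the reference problem; what it does give is exactly the compatibility for the frozen problem $\mathcal{F}(v)$ when $v(0)=u^0$, so the iteration must be set up with that restriction rather than through \eqref{eq:ModelZero} with the same datum. Second, the $T$-uniform bound you assert for the linear problem is only justified at the weighted $L^2$ level (Poincar\'e/Bakry--\'Emery), whereas $W(Q_T)=C([0,T];H^2)\cap H^1(0,T;H^1)$ requires global-in-time control of $\nabla^2 u$ and of $\partial_t u$ in $H^1$; obtaining that requires higher-order energy estimates (differentiating the equation, handling the conormal boundary terms), and this is precisely the technical core of \cite{alasio2018stability} that your sketch replaces by the assertion that ``the energy estimates close uniformly in $T$.'' Third, the absorption of the $O(\delta)$ terms uses $\|u\|_{C^0}$, which is part of what is being proven; closing this circularity needs a continuation/bootstrap argument, and it is exactly why $\delta_0$ depends on $u^0$. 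Without these steps the proposal is a plausible strategy, not a proof.
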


\subsection{Existence of global classical solutions}\label{sec:reg}

We are now going to show that solutions (in the sense of Definition \ref{def:weaksol}) of problem \eqref{main-general} are, indeed, classical solutions if the initial data is sufficiently smooth. The arguments in this section follow closely those presented in \cite{alasio2019global} for a different class of parabolic systems.
We start by showing that classical solutions exist for a short time (see Proposition \ref{thm:AT}) and then extend their maximal time interval of definition to the positive half-line using uniform estimates (see Proposition \ref{thm:amann}). This yields the main result of this section:
\begin{theor}
    \label{thm:main1}
    Let hypotheses \textbf{H1}-\textbf{H4} hold and assume that $u_0$ is (at least) of class $C^2(\bar{\Omega})$. Then problem \eqref{main-general} admits a unique, bounded, global, classical solution in the space $ C^0([0,\infty);C^2(\bar{\Omega})) \cap C^1([0,\infty);C^0(\bar{\Omega}))$.
\end{theor}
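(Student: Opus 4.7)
The plan is to combine the global weak existence result of Proposition \ref{prop:ABC} with a local classical existence theory for quasilinear parabolic systems, and then use uniqueness to transfer the uniform weak estimates to the classical solution, thereby ruling out finite-time blow-up.

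First, I would rewrite \eqref{main-general} in quasilinear non-divergence form by expanding the outer divergence. The principal part has coefficients $\amatrix_{ij}^{\alpha\beta}(x,u) = D_i^{\alpha\beta}(x)(I_{ij} + \delta \Phi_{ij}(u))$; for $\delta \leq \delta_0$ small, the matrix $I + \delta\Phi(u)$ remains close to the identity on the set where $u$ is bounded, so by \textbf{H1} the principal part is normally elliptic and the conormal boundary operator can be shown to satisfy the Lopatinskii-Shapiro condition uniformly. Given $u_0 \in C^2(\bar\Omega)$ satisfying the compatibility condition \eqref{eq:compatcondu_0}, Amann's local existence theorem for quasilinear parabolic systems (the content of Proposition \ref{thm:AT} to follow) yields a unique maximal classical solution $u \in C^0([0,T_{\max}); C^2(\bar\Omega)) \cap C^1([0,T_{\max}); C^0(\bar\Omega))$ for some $T_{\max} \in (0, +\infty]$, together with the continuation alternative that, if $T_{\max} < \infty$, then $\|u(t,\cdot)\|_{C^2(\bar\Omega)}$ diverges as $t \uparrow T_{\max}$.

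Next, I would invoke Proposition \ref{prop:ABC}: for every $T > 0$ the same problem admits a unique weak solution in $W(Q_T)$ with the $T$-independent bound $\|u\|_{W(Q_T)} \leq \Gamma_1 \|u_0\|_{H^2(\Omega)}$, and in particular $\|u\|_{L^\infty(Q_T)} \leq \Gamma_0 \Gamma_1 \|u_0\|_{H^2(\Omega)}$. Since any classical solution is a weak solution in $W(Q_T)$, uniqueness forces the two to coincide on $[0,T_{\max})$, so the classical solution inherits the uniform $L^\infty$ bound on its entire interval of existence. Then I would bootstrap: with $u$ uniformly bounded, the coefficients $\amatrix_{ij}^{\alpha\beta}(x,u)$ and $\drift_{ij}^{\alpha}(x,u)$ are uniformly bounded and sufficiently regular in $(t,x)$, so interior and boundary Schauder estimates for linear parabolic systems with conormal boundary operators (exploiting \eqref{eq:compatcondu_0}) upgrade $u$ to $C^{2+\alpha,1+\alpha/2}$ on every compact subinterval of $[0,T_{\max})$, with bounds depending only on $\|u_0\|_{H^2(\Omega)}$ and the length of the subinterval but not on $T_{\max}$ itself. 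This rules out the blow-up alternative, yielding $T_{\max} = +\infty$, and uniqueness in the claimed class is inherited from the uniqueness in $W(Q_T)$ of Proposition \ref{prop:ABC}.

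The main obstacle I expect is twofold. First, one must verify that the non-divergence form of \eqref{main-general} fits the structural hypotheses of Amann's local theorem, specifically normal ellipticity uniformly in $u$ for the full $m\times m \times d \times d$ tensor, and the appropriate Lopatinskii-Shapiro condition on the conormal boundary operator; both rely crucially on the smallness $\delta \leq \delta_0$ to keep the coefficient matrix close to the decoupled, elliptic reference system \eqref{eq:ModelZero}. Second, the parabolic bootstrap has to be carried out near the boundary with conormal rather than Dirichlet conditions, which requires careful treatment of the compatibility condition at $t=0$ on $\partial\Omega$. Once these two points are secured, the coupling to Proposition \ref{prop:ABC} to close the continuation argument is essentially routine.
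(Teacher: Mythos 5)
Your overall skeleton (local classical existence, identification with the unique $W(Q_T)$ solution of Proposition \ref{prop:ABC}, then a continuation argument) matches the paper's strategy, but the step that actually closes the continuation argument has a genuine gap. You propose to rule out blow-up by a Schauder bootstrap, arguing that once $u$ is uniformly bounded, the coefficients $\amatrix_{ij}^{\alpha\beta}(x,u)$ and $\drift_{ij}^{\alpha}(x,u)$ are ``uniformly bounded and sufficiently regular in $(t,x)$'' so that $C^{2+\alpha,1+\alpha/2}$ estimates follow. Schauder theory, however, requires the coefficients to be H\"older continuous in $(t,x)$, not merely bounded, and the coefficients here are composed with $u$ itself: their H\"older regularity is exactly the H\"older regularity of $u$, uniformly up to the maximal time. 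For scalar equations one could invoke De Giorgi--Nash--Moser to pass from an $L^\infty$ bound to a H\"older bound, but no such theory is available for general parabolic systems, which is precisely why Amann's global-existence criterion (Proposition \ref{thm:amann}) is formulated in terms of a time-uniform H\"older modulus in $C^0(\bar\Omega)$ rather than an $L^\infty$ bound. As written, your bootstrap is circular: the regularity of the coefficients you need is the regularity of $u$ you are trying to prove, and the $L^\infty$ bound you extract from Proposition \ref{prop:ABC} does not supply it.

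The missing ingredient is supplied in the paper by exploiting the \emph{full} strength of Proposition \ref{prop:ABC}: the solution is bounded in $W(Q_T)\subset L^2(0,T;H^2(\Omega))\cap H^1(0,T;L^2(\Omega))$ uniformly in $T$, and the Fourier/interpolation argument of Lemma \ref{lem:fractional} converts this into a bound in $C^{0,\eta}([0,T];C^{0,\theta}(\bar\Omega))$ with exponents and constants independent of $T$. This uniform H\"older estimate is exactly what Proposition \ref{thm:amann} needs, and it is also what would legitimize your Schauder bootstrap if you preferred to keep that route. So the fix is available with the tools you already cite: do not discard the $H^2$-in-space and $H^1$-in-time information down to an $L^\infty$ bound; interpolate it into a time-uniform H\"older modulus first, and only then either invoke Amann's criterion directly (as the paper does) or run the coefficient-freezing/Schauder argument.
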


In order to prove Theorem \ref{thm:main1}, we will use the following two fundamental building blocks:

\begin{prop}[Existence of classical solutions for short time, \cite{acquistapace1988quasilinear}]
\label{thm:AT}
Under the hypotheses of Theorem \ref{thm:main1}, there exists a time $\tau \in(0,T)$ such that, given a sufficiently smooth initial datum $u_0$ that is compatible with the boundary conditions, problem \eqref{main-general} has a unique solution $u$ in the interval $[0,\tau]$ which satisfies
$$
    u\in C^{1+\alpha_0}([0,\tau];L^2(\Omega))\cap C^{\alpha_0}([0,\tau];H^2(\Omega)),
$$
where $\alpha_0\in (0,\frac{1}{2})$ depends on $u_0$, $\amatrix$ and $\drift$.
Furthermore, for all $\alpha_1\in(0,\alpha_0)$, we have
$$
    \partialt u, \; \nabla^2 u \; \in C^{\alpha_1}([0,\tau];C^0(\bar{\Omega})).
$$
\end{prop}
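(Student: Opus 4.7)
The plan is to reduce the quasilinear system \eqref{main-general} to a family of linearized non-autonomous parabolic problems with Hölder-continuous coefficients, and then to close a contraction-mapping argument in a suitable high-regularity function space.

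First, I would freeze the nonlinear coefficients along a candidate trajectory $v(t,x)$ chosen in a small ball around the constant path $t\mapsto u_0$, giving the linear system
\begin{equation*}
\partialt {u_i} - \sum_{\alpha,\beta,j}\fpartial{x_\alpha}\bigl[\amatrix_{ij}^{\alpha\beta}(x,v)\partialb{u_j} - \drift_{ij}^{\alpha}(x,v)u_j\bigr] = 0
\end{equation*}
with the conormal boundary condition and initial datum $u_0$. By \textbf{H1}--\textbf{H3}, the coefficients depend smoothly on $v$ and therefore inherit time-Hölder regularity from any admissible $v$; uniform parabolicity is guaranteed by \eqref{eq:D-Elliptic}.

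Next, I would invoke maximal regularity for non-autonomous linear parabolic systems of this type. One realizes the spatial operator (with conormal boundary conditions) as an unbounded operator on $L^{2}(\Omega)$, verifies that it is sectorial with $t$-Hölder-continuous resolvents, and constructs the associated evolution family. Interpolating between its domain (essentially $H^{2}(\Omega)$ with the compatibility \eqref{eq:compatcondu_0} built in at $t=0$) and $L^{2}(\Omega)$ yields the regularity $u\in C^{1+\alpha_0}([0,\tau];L^{2}(\Omega))\cap C^{\alpha_0}([0,\tau];H^{2}(\Omega))$. The final assertion $\partialt u,\nabla^{2}u\in C^{\alpha_1}([0,\tau];C^{0}(\bar{\Omega}))$ then follows by combining Schauder-type interior and boundary estimates with the embedding $H^{2}(\Omega)\hookrightarrow C^{0}(\bar{\Omega})$ (valid since $d\leq 3$), losing a small amount of Hölder exponent in the process.

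Finally, I would define the solution operator $\mathcal{T}:v\mapsto u$ and show it is a contraction on a small ball in the above regularity class for $\tau$ small enough. The Lipschitz continuity of $\Phi,\Psi$ on bounded sets (from the $C^{2}$ assumption) gives the contraction estimate, and the compatibility condition \eqref{eq:compatcondu_0} is essential so that the boundary data of the linearized problem are actually satisfied at $t=0$, without which the claimed regularity up to the boundary could not hold. Uniqueness follows from the same Lipschitz estimates applied to the difference of two solutions. The principal obstacle is the Hölder maximal regularity for the non-autonomous linearization: for autonomous sectorial problems, analytic semigroup theory delivers the estimates directly, but with time-dependent coefficients one must invoke the Acquistapace--Terreni construction of evolution operators and verify their (somewhat delicate) Hölder and compatibility hypotheses at $t=0$, which is precisely why the proposition is quoted as an off-the-shelf result rather than proved from scratch.
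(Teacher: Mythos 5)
The paper offers no proof of this proposition at all: it is imported verbatim from Acquistapace--Terreni, exactly as you acknowledge in your closing remark, so your sketch (freezing the coefficients along a candidate trajectory, building the evolution family for the non-autonomous sectorial linearization with time-H\"older coefficients, closing by contraction in a high-regularity ball, with the compatibility condition \eqref{eq:compatcondu_0} ensuring regularity up to $t=0$) is essentially the same approach as the cited source rather than a different route. One small caution: the final assertion $\partialt u,\ \nabla^2 u\in C^{\alpha_1}([0,\tau];C^0(\bar{\Omega}))$ cannot be obtained by applying $H^2(\Omega)\hookrightarrow C^0(\bar{\Omega})$ to $\nabla^2 u$ (that embedding controls $u$, not its second derivatives), but must come from the H\"older/Schauder side of the Acquistapace--Terreni theory using $D\in C^1(\bar{\Omega})$ and $u_0\in C^2(\bar{\Omega})$, which is consistent with, though glossed over in, your appeal to ``Schauder-type interior and boundary estimates.''
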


\begin{rem}
We denote by $J(u^0) \subseteq [0,\infty)$ the maximal interval of definition of classical solutions of \eqref{main-general} given an initial datum $u^0$.
\end{rem}

\begin{prop}[Criterion for global existence,  \cite{amann1989dynamic}]\label{thm:amann} Let all assumptions of  Proposition \ref{thm:main1} hold and  consider a solution $u$ of problem \eqref{main-general}. If an exponent $\eps\in (0,1)$ (not depending on time) exists, such  that for any $T>0$,
$$
    u\in C^\eps(J(u_0)\cap[0,T];C^0(\bar{\Omega})),
$$
then $u$ is a global solution, i.e., $J(u_0)=[0,\infty)$.
\end{prop}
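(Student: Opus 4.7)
The strategy is a classical contradiction-and-extension argument. Suppose $T^{\ast} := \sup J(u^0)$ is finite. I would construct a terminal value $u^{\ast}$ at time $T^{\ast}$ that is regular enough to serve as initial datum for a new short-time solution produced by Proposition~\ref{thm:AT}, thereby contradicting the maximality of $T^{\ast}$.

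The first step is to identify the terminal value. The uniform H\"older continuity $u \in C^{\eps}(J(u^0)\cap[0,T^{\ast}]; C^0(\bar{\Omega}))$ makes the family $\{u(t)\}_{t<T^{\ast}}$ Cauchy in $C^0(\bar{\Omega})$ with rate $|t-s|^{\eps}$, producing a limit $u^{\ast}\in C^0(\bar{\Omega})$ as $t \uparrow T^{\ast}$. To upgrade $u^{\ast}$ to $C^{2+\alpha}(\bar{\Omega})$, I would freeze $u$ and view \eqref{main-general} as a \emph{linear} parabolic system whose coefficients $\amatrix_{ij}^{\alpha\beta}(x,u(t,x))$ and $\drift_{ij}^{\alpha}(x,u(t,x))$ are now given: by \textbf{H1}-\textbf{H3} combined with the uniform H\"older-in-time continuity of $u$ (and the fact that the composition of H\"older functions with $C^2$ nonlinearities is again H\"older), these coefficients belong to $C^{\alpha, \alpha/2}_{x,t}$ on cylinders $(T^{\ast}-\eta, T^{\ast})\times\Omega$. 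Parabolic Schauder estimates up to the boundary, exploiting the smoothness of $\Omega$ from \textbf{H4} and the conormal structure of the boundary condition in \eqref{main-general}, then yield uniform bounds on $\|u(t)\|_{C^{2+\alpha'}(\bar{\Omega})}$ for some $\alpha' \in (0,\eps)$ and $t$ in a left-neighbourhood of $T^{\ast}$. Passing to the limit via Arzel\`a-Ascoli gives $u^{\ast}\in C^{2+\alpha'}(\bar{\Omega})$, and the compatibility condition \eqref{eq:compatcondu_0} for $u^{\ast}$ follows by continuity from the condition satisfied by each $u(t)$.

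With $u^{\ast}$ in hand, Proposition~\ref{thm:AT} applied with initial time $T^{\ast}$ produces a classical solution $\tilde u$ on $[T^{\ast}, T^{\ast}+\tau]$ for some $\tau>0$. Concatenating $u$ with $\tilde u$ at $t=T^{\ast}$ yields a classical solution on $[0, T^{\ast}+\tau]$; the uniqueness statement in Proposition~\ref{prop:ABC} guarantees that this concatenation coincides with the maximal solution on the common interval, so we have genuinely extended $u$ past $T^{\ast}$, contradicting the definition of $T^{\ast}$.

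The main obstacle is the Schauder upgrade in the second paragraph. Although the ellipticity \eqref{eq:D-Elliptic} and the smoothness assumptions on $D$, $V$, $\Phi$, $\Psi$ place us within the scope of classical parabolic Schauder theory, the estimates must be \emph{uniform} as $t\uparrow T^{\ast}$. This is precisely where the hypothesis that the exponent $\eps$ is \emph{independent of $T$} enters: it ensures that the H\"older seminorms of the composed coefficients $\amatrix(x,u(\cdot,\cdot))$ and $\drift(x,u(\cdot,\cdot))$, and hence the Schauder constants, remain bounded by a quantity that does not deteriorate as $t$ approaches $T^{\ast}$, so that $u^{\ast}$ genuinely inherits a $C^{2+\alpha'}$ bound rather than losing regularity in the limit.
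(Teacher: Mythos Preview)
The paper does not prove this proposition at all: it is quoted as a black-box result from Amann, and the Remark immediately following the statement makes this explicit (``Proposition~\ref{thm:amann} is a rephrasing of Theorem~2 in \cite{amann1989dynamic} with $\theta=0$''). There is therefore no in-paper proof to compare your attempt against.

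That said, your sketch follows the right overall template (contradiction via extension past a putative finite maximal time), but the Schauder upgrade step has a gap as written. The hypothesis gives $u\in C^{\eps}$ in time with values in $C^{0}(\bar\Omega)$; it supplies no spatial H\"older regularity of $u$ directly. Consequently the frozen coefficients $\amatrix(x,u(t,x))$ and $\drift(x,u(t,x))$ are, through the composition with $u$, a priori only continuous in $x$, not $C^{\alpha}$ in $x$, and classical parabolic Schauder theory does not apply on that basis alone. Amann's actual argument in \cite{amann1989dynamic} avoids this by working in the abstract semigroup framework: the analytic semigroup generated by the principal part smooths, and the uniform H\"older-in-time bound with values in $C^{0}$ is precisely what is needed to control the solution uniformly in the domains of fractional powers of the generator, hence in $C^{2+\alpha'}$, up to $T^{\ast}$. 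In the specific setting of this paper your route can in fact be rescued, because Proposition~\ref{prop:ABC} already places $u$ in $C([0,T];H^{2}(\Omega))$ uniformly in $T$, which for $d\le 3$ yields spatial H\"older continuity and makes the frozen-coefficient Schauder argument legitimate; but you would need to invoke that explicitly rather than relying on \textbf{H1}--\textbf{H3} and time-H\"older continuity alone.
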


\begin{rem}[Notation]
Proposition \ref{thm:amann} is a rephrasing of Theorem 2 in \cite{amann1989dynamic} with $\theta=0$.
Notice also that the \quot{convective term}, indicated by $f$ in the notation of \cite{amann1989dynamic}, is affine in the gradient in our case. Additionally, we do not use the notation $BUC^\eps$ to denote the space of \quot{bounded, uniformly $\eps$-H\"older continuous functions} but simply write that the H\"older exponent does not depend on time.
\end{rem}

The following interpolation results provides sufficient conditions for the criterion in Proposition \ref{thm:amann} to be satisfied.

\begin{lem}\label{lem:fractional}
Let $f: Q_T \to\RR$ be a function in $X = L^2(0,T;H^2(\Omega)) \cap H^1(0,T;L^2(\Omega))$, then
$$
    f\in H^r(0,T;H^s(\Omega)), \;\;\; \text{ for all } \; r,s\geq 0 
    \; \text{ such that } \; r + \frac{s}{2} \leq 1,
$$
and, in turn,
$$
f\in  C^{0,\eta}([0,T];C^{0,\theta}(\bar{\Omega})), 
\; \text{ for all } \; 
\eta,\theta\geq 0 
\; \text{ such that } \; 2\eta + \theta \leq \frac{1}{2}.
$$
\end{lem}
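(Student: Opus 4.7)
The plan is to prove the two statements in sequence: the first by Hilbert-space (or Lions–Peetre) interpolation between the two end spaces defining $X$, and the second by composing the resulting regularity with standard Sobolev–Morrey embeddings in the time and space variables independently.

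For the first statement, I would view $X$ as the intersection of the two anisotropic Hilbert spaces $X_0 = L^2(0,T;H^2(\Omega))$ and $X_1 = H^1(0,T;L^2(\Omega))$, whose ``regularity vectors'' $(0,2)$ and $(1,0)$ interpolate linearly. Using the classical result of Lions–Magenes on interpolation of anisotropic Sobolev spaces (see Chapter 1 of Lions–Magenes), for every $\vartheta\in[0,1]$ one has the continuous embedding
\begin{equation*}
    X \;\hookrightarrow\; [X_0,X_1]_\vartheta
    \;\hookrightarrow\; H^{\vartheta}\bigl(0,T;H^{2(1-\vartheta)}(\Omega)\bigr).
\end{equation*}
Choosing $\vartheta = r$ so that $s = 2(1-r)$ gives exactly the extremal case $r + s/2 = 1$. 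For the subcritical range $r + s/2 < 1$ it then suffices to lower the regularity in either the time or space variable using the monotonicity $H^{r'}\hookrightarrow H^r$ and $H^{s'}\hookrightarrow H^s$ for $r'\geq r$, $s'\geq s$.

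For the second statement, I would apply the standard fractional Sobolev embeddings separately in time and in space. In the one-dimensional time variable, $H^r(0,T)\hookrightarrow C^{0,\eta}([0,T])$ whenever $r > 1/2 + \eta$, and on $\Omega\subset\R^d$ (smooth, bounded) one has $H^s(\Omega)\hookrightarrow C^{0,\theta}(\bar\Omega)$ whenever $s > d/2 + \theta$. Applied to a representative $f\in H^r(0,T;H^s(\Omega))$ obtained from the first part with $r+s/2=1$, this yields $f\in C^{0,\eta}([0,T];C^{0,\theta}(\bar\Omega))$ provided
\begin{equation*}
    \eta \leq r-\tfrac{1}{2}, \qquad \theta \leq s-\tfrac{d}{2},
\end{equation*}
which (for $d=1$) combines to the asserted relation $2\eta + \theta \leq 2r + s - 3/2 = 1/2$; for any target pair $(\eta,\theta)$ satisfying $2\eta+\theta\leq 1/2$ I would pick $r$, $s$ saturating the two embeddings and verify $r+s/2 \leq 1$ to feed into the first part.

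The main technical obstacle I foresee is justifying the anisotropic interpolation identity rigorously for this particular intersection space, since the precise form $[L^2(0,T;H^2), H^1(0,T;L^2)]_\vartheta = H^{\vartheta}(0,T;H^{2(1-\vartheta)}(\Omega))$ requires either a vector-valued real-interpolation argument or a reflection/extension trick to first extend $f$ from $(0,T)$ to $\R$ before taking fractional time derivatives. A secondary issue is the sharp Sobolev exponent at the borderline $r = 1/2$ or $s = d/2$, which I would avoid by restricting to strict subcritical indices and then closing the desired inequality by a density/continuity argument.
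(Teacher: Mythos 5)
Your proposal is correct in outline and rests on the same underlying idea as the paper -- interpolating between the two endpoint regularities $L^2(0,T;H^2(\Omega))$ and $H^1(0,T;L^2(\Omega))$ and then applying Sobolev--Morrey embeddings in time and space -- but the key step is executed differently. The paper does not invoke the Lions--Magenes identification of $[L^2(0,T;H^2),H^1(0,T;L^2)]_\vartheta$; instead it extends $f$ to all of $\RR^{d+1}$ (Sobolev extension plus cut-off, which is exactly the ``extension/reflection trick'' you flag as a technical obstacle) and proves the intermediate regularity by hand in Fourier variables, via the elementary pointwise bound $\langle \omega\rangle^r\langle\kappa\rangle^s|\hat g|\leq(\langle\omega\rangle+\langle\kappa\rangle^2)^{r+s/2}|\hat g|$ for $r+s/2\leq 1$, together with the norm control $\nm{g}_{X'}\leq 2\nm{f}_X$. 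Your route is shorter but leans on the nontrivial vector-valued interpolation theorem (and on the standard embedding of the intersection into every interpolation space), whereas the paper's argument is self-contained, quantitative, and treats all admissible $(r,s)$ at once without passing through the extremal case and monotonicity. For the H\"older step the two arguments coincide: the paper takes $\eta=r-\tfrac12$, $\theta=s-\tfrac12$, which, as you correctly observe, is the one-dimensional Sobolev exponent in the space variable; your explicit remark that in general one only gets $\theta\leq s-\tfrac d2$, so that the stated relation $2\eta+\theta\leq\tfrac12$ is tied to $d=1$, is if anything more careful than the paper on this point (in the application only $\theta=0$, i.e.\ continuity in $\bar\Omega$ with time-H\"older regularity, is actually needed). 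Your final bookkeeping (choosing $r=\eta+\tfrac12$, $s=\theta+\tfrac12$ and checking $r+s/2\leq1$) is consistent, so there is no gap in the proposal beyond the dimensional caveat shared with the paper.
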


\begin{proof}
Thanks to the higher order extensions for Sobolev functions, we can define $f$ on a larger cylindrical domain $R\subseteq \RR^{d+1}$ containing $Q_T$. Introducing a cut-off function, we further extend $f$ to the whole space ensuring sufficiently fast decay at infinity.
Let us call $g$ such an extension. We observe that the norm of $g$ in $X'=L^2(\RR;H^2(\RR^d))\cap H^1(\RR;L^2(\RR^d))$ is controlled by the corresponding norms of $f$ on $Q_T$.
Let $\langle \kappa \rangle = (1+|\kappa|^2)^{1/2}$. Denoting by $(\omega, \kappa)$ the conjugate variables of $(t,x)$ in Fourier space, we have that
$$
    \langle \omega \rangle \hat{g} \in L^2(\RR^{d+1}), 
    \quad
    \text{as well as}
    \quad 
    \langle \kappa \rangle^2 \hat{g} \in L^2(\RR^{d+1}).
$$
This means that 
$
    \left(
    \langle \omega \rangle + \langle \kappa \rangle^2
    \right) \hat{g} \in L^2(\RR^{d+1})
$
and we obtain
$
\langle \omega \rangle^r \langle \kappa \rangle^s |\hat{g}|
\leq
\left(
\langle \omega \rangle + \langle \kappa \rangle^2
\right)^{r+\frac{s}{2}} |\hat{g}|.
$
Thus we obtain the desired fractional Sobolev regularity provided that $r+\frac{s}{2}\leq 1$. We are also using the following inequality, relating the norms of $g$ and $f$,
$$
\nm{g}_{X'} \leq 2\nm{f}_{X}.
$$
Then the H\"older regularity follows from standard embeddings for fractional Sobolev spaces (see, e.g., \cite{dinezza2012hitchhiker}). In particular, for $r,s>\frac{1}{2}$, we take $\eta = r - \frac{1}{2}$ and $\theta = s - \frac{1}{2}$.
\end{proof}

\begin{proof}[Proof of Theorem \ref{thm:main1}]
Thanks to Proposition \ref{thm:AT}, we know that classical solutions exist for short times and that, as explained in \cite{acquistapace1988quasilinear}, they may be extended to a maximal interval of existence denoted by $J(u_0)$ by standard methods.
In order to show that such solutions exist for arbitrarily large time we are going to use the criterion provided by Proposition \ref{thm:amann}.
In particular, we need H\"older continuity of $u$ with respect to time, as well as a uniform $L^\infty$-bound in the space variable.
Thanks to Proposition \ref{prop:ABC}, we know that $u\in W$ with a bound that is uniform in time. This implies that $u\in X$ and hence we apply Lemma \ref{lem:fractional} in order to obtain obtain uniform H\"older estimates. Thus Proposition \ref{thm:amann} allows us to conclude the proof.
\end{proof}

\subsection{Poincar\'e's inequality}

In the next sections we will need the following simple modification of Poincar\'e inequality (see e.g. \cite[Lemma 1.36]{troianiello}). 
We recall that, if $\Omega$ is convex,  we may take $C_P\leq \diam(\Omega) \pi^{-1}$ (see \cite{payne1960optimal}); otherwise a detailed treatment of this type of inequalities for more general $\Omega$ can be found in  \cite[Chapter 4]{ziemer2012weakly}.

\begin{lem}[A Poincar\'e--type inequality]
\label{lem:poincare}
Consider a bounded, connected, Lipschitz domain $\Omega$, a function $f\in H^1(\Omega)$ and a subset $S\subset\Omega$ with positive Lebesgue measure. Suppose that $\int_S f(x) \d x=0$, then there exists a constant $C_P>0$ (depending on $\Omega$ and $S$) such that
\begin{equation}\label{poincare}
    \nm{f}_{L^2(\Omega)} \leq C_P\nm{\nabla f}_{L^2(\Omega)}.
\end{equation}
Furthermore, given a family of functions $(f_\ell)_{\ell\in\RR} \subset H^1(\Omega)$, such that $\sigma_0 = \mathrm{inf}_{\ell\in\RR} \{
{|\Omega|^{-1}}{|\mathrm{ess\, supp}(f_\ell)|}
\} \in (0,1)$, then inequality \eqref{poincare} holds for any function $f_\ell$ and the constant $C_P$ depends only on $\Omega$ and $\sigma_0$.
\end{lem}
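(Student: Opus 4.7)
The plan is to reduce the statement to the standard Poincaré--Wirtinger inequality and then to convert the vanishing-integral condition on $S$ into a direct control of the mean of $f$.

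First I would decompose $f = (f - \bar f) + \bar f$ with $\bar f = |\Omega|^{-1}\int_\Omega f\,\d x$. Since $\Omega$ is bounded, connected and Lipschitz, Poincaré--Wirtinger supplies a constant $C_{PW} = C_{PW}(\Omega)$ such that $\nm{f - \bar f}_{L^2(\Omega)} \leq C_{PW}\,\nm{\nabla f}_{L^2(\Omega)}$. To estimate the mean itself I would exploit the hypothesis $\int_S f\,\d x = 0$, which rewrites as $|S|\,\bar f = \int_S(\bar f - f)\,\d x$; Cauchy--Schwarz then yields $|\bar f| \leq |S|^{-1/2}\nm{f-\bar f}_{L^2(\Omega)}$. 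Combining this with the orthogonal decomposition $\nm{f}_{L^2(\Omega)}^2 = \nm{f-\bar f}_{L^2(\Omega)}^2 + |\Omega|\bar f^{\,2}$ gives \eqref{poincare} with the explicit constant $C_P^2 = C_{PW}^2\bigl(1 + |\Omega|/|S|\bigr)$, which exhibits the advertised dependence on $\Omega$ and $S$.

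For the family version, the key observation is that the constant just produced depends on $S$ only through the ratio $|S|/|\Omega|$. Applied to each $f_\ell$ with the natural choice of a set $S_\ell$ on which $f_\ell$ has vanishing integral (for instance the essential zero set associated to the support hypothesis), the relative measure $|S_\ell|/|\Omega|$ is bounded below by a positive constant determined by $\sigma_0$. The scalar argument then applies verbatim to every index, producing a single constant $C_P = C_P(\Omega,\sigma_0)$ valid throughout the family.

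I do not expect a serious obstacle: Poincaré--Wirtinger on bounded connected Lipschitz domains is classical, and the rest is a one-line Cauchy--Schwarz manipulation. As an alternative I would keep in reserve the usual compactness/contradiction argument (normalise $\nm{f_n}_{L^2}=1$, $\nm{\nabla f_n}_{L^2}\to 0$, pass to an $L^2$-limit via Rellich--Kondrachov, observe that $\nabla f \equiv 0$ and connectedness force $f$ constant, then use $\int_S f_n\to\int_S f=0$ to force $f\equiv 0$ against $\nm{f}_{L^2}=1$), but this route is less informative because it does not make explicit the dependence on $|S|$, which is precisely what the family statement requires.
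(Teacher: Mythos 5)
Your proof is correct, but it takes a genuinely different route from the paper. The paper argues by contradiction and compactness: it normalises $\nm{f_n}_{L^2(\Omega)}=1$, uses $\nm{\nabla f_n}_{L^2(\Omega)}<n^{-1}$ and Rellich--Kondrachov to extract an $L^2$-strong limit $f_\infty$ with $\nabla f_\infty=0$, $\int_S f_\infty=0$, $\nm{f_\infty}_{L^2(\Omega)}=1$, and concludes by connectedness; the proof of the uniform family statement is then omitted as ``very similar''. This is exactly the argument you keep in reserve. Your direct route --- Poincar\'e--Wirtinger for $f-\bar f$ plus the Cauchy--Schwarz bound $|\bar f|\leq |S|^{-1/2}\nm{f-\bar f}_{L^2(\Omega)}$ coming from $\int_S f=0$, combined through the orthogonal splitting $\nm{f}_{L^2(\Omega)}^2=\nm{f-\bar f}_{L^2(\Omega)}^2+|\Omega|\bar f^{\,2}$ --- buys something the paper's proof does not: the explicit constant $C_P^2=C_{PW}^2\bigl(1+|\Omega|/|S|\bigr)$, which depends on $S$ only through the ratio $|S|/|\Omega|$, so the uniform constant for the family $(f_\ell)$ is immediate rather than needing a separate compactness argument over all admissible sets. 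One small point of care in the family case: the natural choice is $S_\ell=\Omega\setminus\mathrm{ess\,supp}(f_\ell)$, on which $f_\ell$ vanishes a.e., and its relative measure is $1-|\Omega|^{-1}|\mathrm{ess\,supp}(f_\ell)|$; a uniform lower bound on $|S_\ell|/|\Omega|$ therefore requires the support measures to be bounded away from $|\Omega|$, which is how the paper actually uses $\sigma_0$ later (it takes $\sigma_0=1-\alpha$ with the zero set of measure at least $(1-\alpha)|\Omega|$), even though the lemma as written phrases $\sigma_0$ as an infimum of the support measure. This is an imprecision of the statement itself rather than a gap in your argument, but you should state explicitly which set plays the role of $S_\ell$ and why its measure is uniformly bounded below in terms of $\sigma_0$.
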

\begin{proof}
We argue by contradiction.
If the statement did not hold, then for each $n>0$ we could find a function $f_n\in H^1(\Omega)$ such that $\int_S f_n(x) \d x=0$ and
$$
\nm{f_n}_{L^2(\Omega)} > n\nm{\nabla f_n}_{L^2(\Omega)}.
$$
Without loss of generality, we can assume that $\nm{f_n}_{L^2(\Omega)}=1$ (by homogeneity of the norm).
This implies that $\nm{\nabla f_n}_{L^2(\Omega)}<n^{-1}$. 
Therefore, as $n\to\infty$, we can extract a subsequence converging strongly in $L^2(\Omega)$ to a limit function $f_\infty\in H^1(\Omega)$ which satisfies the following properties 
$$
    \nm{\nabla f_\infty}_{L^2(\Omega)}= 0, \quad 
    \int_S f_\infty(x) \d x=0,
    \quad \text{and} \quad
    \nm{f_\infty}_{L^2(\Omega)}=1.
$$
The first two conditions above imply that $f_\infty = 0$, which contradicts the third.
The proof of the second statement is very similar and we therefore omit it.
\end{proof}

\begin{rem}\label{rem:P}
Under the same assumptions of Lemma \ref{lem:poincare}, we have
\begin{equation}\label{eq:estim1}
    \nm{\nabla f}_{L^2(\Omega)}
    \leq \frac{1}{1-C_P\nm{\nabla V}_\infty}
    \nm{\nabla f + f\nabla V}_{L^2(\Omega)},
\end{equation}
provided that $C_P\nm{\nabla V}_\infty < 1$.
More explicitly, we have
\begin{align*}
    \nm{\nabla f}_{L^2(\Omega)} 
    &\leq \nm{\nabla f + f\nabla V}_{L^2(\Omega)}
    \,+\, \nm{f\nabla V}_{L^2(\Omega)}
    \\[\alnspace]
    &\leq \nm{\nabla f + f\nabla V}_{L^2(\Omega)}
    \,+\, \nm{\nabla V}_{\infty}\nm{f}_{L^2(\Omega)}
    \\[\alnspace]
    &\leq \nm{\nabla f + f\nabla V}_{L^2(\Omega)}
    \,+\, C_P\nm{\nabla V}_{\infty}\nm{\nabla f}_{L^2(\Omega)}.
\end{align*}
Notice that the condition $C_P\nm{\nabla V}_\infty < 1$ gives a restriction on the size of $\Omega$
(since the Poincar\'e constant depends on $\diam(\Omega)$) or, alternatively, on the $L^\infty$-norm of the drift term $\nabla V$. We will use this relation in estimate \eqref{eq:hLtwoestimate}.
\end{rem}

\section{Trend to equilibrium}\label{sec:expdec}

In this section we investigate the equilibration behaviour of solutions to system \eqref{main-general}. We show that they converge exponentially fast to the stationary state in the $L^2$-norm if assumptions \textbf{H1}-\textbf{H5} are satisfied. \\
We choose a more compact notation in the following. Let $u = (u_1, \ldots u_m)$, then system \eqref{eq:ModelZero} can be written as 
\begin{equation}
    \label{eq:mainsys}
    \left\{
    \begin{array}{rl}
    \partialt u 
    -\nabla \cdot \left\{
    D(x) [ (I + \delta \Phi(u)) \nabla u 
    + (\diag(\nabla V) + \delta \Psi(u) ) u ]
    \right\}\remspace
    &=
    0
    \quad \text{ in } \Omega,
    \\[\eqnspace]
    \nu \cdot \left\{  D(x) [ (I + \delta \Phi(u)) \nabla u 
    + (\diag(\nabla V) + \delta \Psi(u) ) u ] \right\} \remspace
    &= 0
    \quad \text{ on } \de\Omega,
    \\[\eqnspace]
    u(0,x)\remspace
    &=
    u^0(x),
    \end{array}
    \right.
\end{equation}
and the corresponding stationary problem:
\begin{equation}
    \label{eq:mainsysstat}
    \left\{
    \begin{array}{rl}
    \nabla \cdot \left\{ 
    D(x) [ (I + \delta \Phi(\us)) \nabla \us 
    + (\diag(\nabla V) + \delta \Psi(\us) ) \us ]
    \right\}\remspace
    &=
    0
    \quad \text{ in } \Omega,
    \\[\eqnspace]
    \nu \cdot \left\{D(x) [ (I + \delta \Phi(\us)) \nabla \us 
    + (\diag(\nabla V) + \delta \Psi(\us) ) \us ] \right\} \remspace
    &= 0
    \quad \text{ on } \de\Omega,
    \end{array}
    \right.
\end{equation}
where we used the compact notation
$$
    D = \left\{ D_{i}^{\alpha \beta} \right\}^{1\leq\alpha,\beta\leq d}_{ 1\leq i\leq m}, \qquad\text{and}\qquad \diag(\nabla V) = \left\{ \partialb{V_i} \right\}^{1\leq\beta\leq d}_{1\leq i\leq m},
$$
as well as
$$
    \Phi = \{ \Phi_{ij} \}_{1\leq i,j\leq m}, \qquad\text{and}\qquad \Psi = \{ \Psi_{ij} \}_{1\leq i,j\leq m}.
$$

Recall that $\Omega$ is a bounded domain of class $C^2$ in $\RR^d$ (with $d=1,2,3$), and $0\leq\delta\leq \delta_0\leq 1$, $u^0_i\in C^2(\Omega)$ and $V_i\in C^{2}(\bar{\Omega})$. 

Before stating and proving the main theorem of this section, we introduce some notation and simple inequalities that we will use in the proof.

\begin{rem}\label{rem:L}
Given a function $F:\RR^m\to\RR^{m\times m}$ of class at least $C^k$ ($k\geq 0$) such that $F(0)=0$, we define the quantity
$$
    L_k(F,R) := \nm{F}_{C^k(\overline{B_{R}(0)})};
$$
in particular, given two functions $w,z:\RR^d\to\RR^m$ such that $w,z\leq M$ for some $M>0$ and for a.e. $x\in\RR^d$, then we have (by Taylor expansion)
$$
    |F(w)|\leq L_0(F,M), \quad \text{ and } \quad 
    |F(w)-F(z)|\leq L_1(F,M) |w-z|.
$$
\end{rem}

\begin{rem}\label{rem:V}
We shall use the following notation:
$$
    \nm{V}_{\infty} = \max_{1\leq i \leq m} \nm{ V_i}_{L^\infty(\Omega)},
    \quad
    \text{ and } 
    \quad
    \nm{\nabla V}_{\infty} = \max_{1\leq i \leq m} \nm{\nabla V_i}_{L^\infty(\Omega)}.
$$
\end{rem}

We now state the main result of this section:

\begin{theor}[Exponential convergence]\label{lem:energysys}
Let assumptions \textbf{H1}-\textbf{H4} hold and 
suppose that $u\in W(Q_T)$ and $\us \in W^{1,\infty}(\Omega)$ are solutions of problem \eqref{eq:mainsys} and \eqref{eq:mainsysstat} respectively.
Let us denote by $M>0$ a constant not depending on $T$ such that
\begin{equation}\label{eq:Mbound2}
    |u(t,x)|\leq M,
\end{equation}
for any $(t,x)\in[0,T]\times\Omega$. Furthermore, suppose that 
\begin{equation}
\label{eq:deltabound2}
    \delta \leq 
    \min \left\{
    \delta_0,\,\,
    \frac{\lambda (1-C_P\nm{\nabla V}_{\infty})}{2\Lambda e^{\nm{V}_{\infty}}\Gamma_M(1+C_P\nm{\nabla V}_{\infty})}
    \right\},
    \quad
    \text{ and }
    \quad
    C_P\nm{\nabla V}_\infty < 1,
\end{equation}
where $\Gamma_M$ is given by \eqref{const:gamma}.
Then the following inequality holds for any $t_1, t_2 \geq 0$:
\begin{equation}\label{eq:eneryest2}
    \nm{u(t_2) - \us}_{L^2(\Omega)}
    \leq 
    \exp\left(3\nm{V}_\infty- \frac{\lambda}{2C_P^2} (t_2 - t_1)\right)
    \nm{u(t_1) - \us}_{L^2(\Omega)}.
\end{equation}
\end{theor}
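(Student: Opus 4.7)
The approach is to test the equation for the discrepancy $h := u - \us$ against the exponentially weighted quantity $e^{V_i} h_i$ and thereby close a Gronwall inequality for
\begin{equation*}
    E(t) := \half \sum_{i=1}^m \int_\Omega e^{V_i(x)}\, h_i^2(t,x)\, \d x.
\end{equation*}
This weight is natural because the decoupled linear reference problem \eqref{eq:ModelZero} admits stationary states proportional to $e^{-V_i}$; concretely, the identity $\nabla(e^{V_i} h_i) = e^{V_i}(\nabla h_i + h_i \nabla V_i)$ renders the drift self-adjoint after integration by parts. Moreover, $E$ is equivalent to $\half \nm{h}_{L^2(\Omega)}^2$ up to factors of $e^{\pm \nm{V}_\infty}$, and mass conservation (with $\us$ chosen at the common mass) gives $\int_\Omega h_i\, \d x = 0$, which is what makes Lemma \ref{lem:poincare} applicable to each species.

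Subtracting the stationary system \eqref{eq:mainsysstat} from \eqref{eq:mainsys}, testing the equation for $h_i$ against $e^{V_i} h_i$, and integrating by parts using the no-flux boundary conditions (which hold separately for $u$ and $\us$) yields, with $G_i := \nabla h_i + h_i \nabla V_i$,
\begin{equation*}
    \ddt E = -\sum_i \int_\Omega e^{V_i}\, G_i\cdot D_i\, G_i\, \d x
    \;-\; \delta \sum_i \int_\Omega e^{V_i}\, G_i \cdot D_i\, R_i \, \d x,
\end{equation*}
where $R_i := \sum_j\bigl[\Phi_{ij}(u)\nabla u_j - \Phi_{ij}(\us)\nabla \us_j + \Psi_{ij}(u) u_j - \Psi_{ij}(\us)\us_j\bigr]$ collects the cross-diffusive perturbations. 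The linear piece is bounded below by $\lambda \sum_i \int e^{V_i} |G_i|^2\, \d x$ via \textbf{H1}. Using the splittings $\Phi_{ij}(u)\nabla u_j - \Phi_{ij}(\us)\nabla \us_j = [\Phi_{ij}(u)-\Phi_{ij}(\us)]\nabla u_j + \Phi_{ij}(\us)\nabla h_j$ (and analogously for $\Psi$), together with Remark \ref{rem:L}, assumption \textbf{H2}, and the uniform bound \eqref{eq:Mbound2}, one arrives at a pointwise estimate $|R_i| \leq \Gamma_M(|h| + |\nabla h|)$ with a constant $\Gamma_M$ built from the $L_0, L_1$ norms of $\Phi, \Psi$, from $M$, and from $\nm{\nabla \us}_\infty$; this is the constant \eqref{const:gamma}.

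To close the argument, I would combine Lemma \ref{lem:poincare} (using $\int_\Omega h_i\, \d x = 0$) with Remark \ref{rem:P} to derive both $\nm{h_i}_{L^2} \leq C_P (1-C_P\nm{\nabla V}_\infty)^{-1}\nm{G_i}_{L^2}$ and $\nm{\nabla h_i}_{L^2} \leq (1-C_P\nm{\nabla V}_\infty)^{-1}\nm{G_i}_{L^2}$. Inserting these into the pointwise bound on $R_i$ yields $\int e^{V_i}|R_i|^2\, \d x \leq C\, e^{\nm{V}_\infty}\Gamma_M^2 (1+C_P\nm{\nabla V}_\infty)^2 (1-C_P\nm{\nabla V}_\infty)^{-2}\int e^{V_i}|G_i|^2\, \d x$; Cauchy-Schwarz combined with the threshold \eqref{eq:deltabound2} then lets the $\delta$-term be absorbed into one half of the linear dissipation, leaving a differential inequality $\ddt E \leq -\alpha E$. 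Gronwall, followed by translating from $E$ back to $\nm{h}_{L^2(\Omega)}$ through the two-sided weight equivalence, produces the prefactor $e^{3\nm{V}_\infty}$ and the rate $\lambda/(2C_P^2)$ in \eqref{eq:eneryest2}. The main obstacle is the quantitative bookkeeping in this nonlinear step: every piece of $R_i$ must ultimately be dominated by the dissipation $\nm{G_i}_{L^2}^2$ itself (and not only by $\nm{\nabla h_i}_{L^2}^2$), and the algebraic form of the threshold \eqref{eq:deltabound2}---in particular the factor $(1-C_P\nm{\nabla V}_\infty)/(1+C_P\nm{\nabla V}_\infty)$---is exactly what this Cauchy-Schwarz absorption prescribes.
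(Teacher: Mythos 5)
Your overall strategy is the same as the paper's (test the equation for $h=u-\us$ against $e^{V}h$, split the order-$\delta$ terms, bound them via Remark \ref{rem:L}, Remark \ref{rem:P} and Poincar\'e, absorb them into half of the dissipation under \eqref{eq:deltabound2}, then apply Gronwall), but two steps as written do not go through. First, the splitting $\Phi_{ij}(u)\nabla u_j-\Phi_{ij}(\us)\nabla \us_j=[\Phi_{ij}(u)-\Phi_{ij}(\us)]\nabla u_j+\Phi_{ij}(\us)\nabla h_j$ leaves the difference term multiplied by $\nabla u_j$, so your claimed pointwise bound $|R_i|\leq \Gamma_M(|h|+|\nabla h|)$ would require $\nabla u\in L^\infty$, which is not available: the hypotheses give only $|u|\leq M$ and $u\in W(Q_T)$, hence $\nabla u\in H^1(\Omega)\hookrightarrow L^6(\Omega)$ but not $L^\infty$. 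The constant \eqref{const:gamma} contains $\nm{\nabla \us}_{L^\infty(\Omega)}$ precisely because the paper groups the other way, namely $\Phi_{ij}(u)\nabla h_j+[\Phi_{ij}(u)-\Phi_{ij}(\us)]\nabla \us_j$, exploiting $\us\in W^{1,\infty}(\Omega)$; this is an easy fix, but as stated the step fails.

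Second, and more importantly, your endgame does not yield the rate asserted in \eqref{eq:eneryest2}. After absorption you are left with $\frac{\d}{\d t}E\lesssim-\frac{\lambda}{2}\int e^{V}|\nabla h+h\nabla V|^2\d x$, and you propose to pass to $\frac{\d}{\d t}E\leq-\alpha E$ via $\nm{h_i}_{L^2}\leq C_P(1-C_P\nm{\nabla V}_\infty)^{-1}\nm{\nabla h_i+h_i\nabla V_i}_{L^2}$; tracking the constants, this produces $\alpha$ of order $\lambda(1-C_P\nm{\nabla V}_\infty)^{2}e^{-c\nm{V}_\infty}C_P^{-2}$, which is strictly weaker than the rate $\lambda/C_P^{2}$ (for the squared norm) claimed in the theorem, so your final assertion that this route "produces the prefactor $e^{3\nm{V}_\infty}$ and the rate $\lambda/(2C_P^{2})$" is not justified by the computation you describe. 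The paper avoids exactly this loss: after the absorption it drops the weight using $e^{V_i}\geq 1$, changes variables $g=he^{V}$ so that $\nabla h+h\,\diag(\nabla V)=e^{-V}\nabla g$, and applies the plain Poincar\'e inequality to $g$; in this way the factor $(1-C_P\nm{\nabla V}_\infty)$ enters only the smallness threshold \eqref{eq:deltabound2}, while the decay rate remains $\lambda/(2C_P^{2})$ and the exponential weights contribute only the prefactor $e^{3\nm{V}_\infty}$. You either need this change of variables (or an equivalent sharp weighted Poincar\'e argument) at the end, or you prove a weaker statement than the one claimed.
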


\begin{proof}
Let $h := u - \us$, then $h$ satisfies the following system (with no-flux boundary conditions) in $\Omega$:

\begin{equation}
    \label{eq:mainsysh}
    \begin{split}
    \partialt h 
    =\nabla \cdot \left\{
    D(x) \left[ (I + \delta \Phi(u)) \nabla h
    +\delta ( \Phi(u)-\Phi(\us) ) \nabla \us
    + \left(\diag(\nabla V) + \delta \Psi(u) \right) h
    + \delta (\Psi(u) - \Psi(\us))\us\right]
    \right\}.
    \end{split}
\end{equation}
Testing system \eqref{eq:mainsysh} against $h \exp(V)$ gives
\begin{align}
    \label{eq:testhV}
    \ddt \int_{\Omega} \frac{1}{2} \sum_{i} e^{V_i} h_i^2 \d x
    \,+\,
    \int_{\Omega} \sum_{ij} e^{V_i}
    \left[ \nabla h_i + h_i\nabla V_i \right]
    \cdot D_{ij} \left[ \nabla h_i + h_i\nabla V_i 
    + \delta \nn_j
    \right] \d x
    = 0,
\end{align}
where 
\begin{align*}
    \nn_j &= \nn_j^{\mathrm{I}} + \nn_j^{\mathrm{II}} + \nn_j^{\mathrm{III}} + \nn_j^{\mathrm{IV}}
    \\[\alnspace]
    &= \sum_k \left\{
    \Phi_{jk}(u)\nabla h_k
    + [ \Phi_{jk}(u)-\Phi_{jk}(\us) ] \nabla \us_k
    + \Psi_{jk}(u) h_k
    + [\Psi_{jk}(u) - \Psi_{jk}(\us)]\us_{k}
    \right\}.
\end{align*}
Next we estimate the order $\delta$ terms in \eqref{eq:testhV} and define
$$
    \int_{\Omega} \sum_{ij} e^{V_i}
    \left[ \nabla h_i + h_i\nabla V_i \right]
    \cdot D_{ij} 
    \left[
    \nn_j^{\mathrm{I}} + \nn_j^{\mathrm{II}} + \nn_j^{\mathrm{III}} + \nn_j^{\mathrm{IV}}
    \right] 
    \d x
    =
    \jj^{\mathrm{I}} + \jj^{\mathrm{II}} + \jj^{\mathrm{III}} + \jj^{\mathrm{IV}}.
$$
In particular, making extensive use of Remarks \ref{rem:L}, \ref{rem:V}, and \ref{rem:P},  we derive the following four estimates:
\begin{align*}
    \jj^{\mathrm{I}} 
    &= \int_{\Omega} 
    \sum_{ijk} e^{V_i}
    \left[ \nabla h_i + h_i\nabla V_i \right]
    \cdot D_{ij}
     \Phi_{jk}(u)\nabla h_k
    \d x
    \\[\alnspace]
    & \leq
    \Lambda \,L_0(\Phi,M) e^{\nm{V}_{\infty}}\, 
    \left[ 
    \nm{\nabla h}_{L^2(\Omega)}^2
    + \nm{\nabla V}_{\infty}\nm{\nabla h}_{L^2(\Omega)}\nm{h}_{L^2(\Omega)}
    \right]
    \\[\alnspace]
    & \leq
    \Lambda \,L_0(\Phi,M) e^{\nm{V}_{\infty}}\,(1+C_P\nm{\nabla V}_{\infty})\nm{\nabla h}_{L^2(\Omega)}^2,
    \\
    & \quad
    \\[\alnspace]
    \jj^{\mathrm{II}}
    &=\int_{\Omega} 
    \sum_{ijk} e^{V_i}
    \left[ \nabla h_i + h_i\nabla V_i \right]
    \cdot D_{ij}
     ( \Phi_{jk}(u)-\Phi_{jk}(\us) ) \nabla \us_k
    \d x
    \\[\alnspace]
    & \leq
    \Lambda \,L_1(\Phi,M) \,e^{\nm{V}_{\infty}}\, \nm{\nabla \us}_{L^\infty(\Omega)}\,
    \left[ 
    \nm{\nabla h}_{L^2(\Omega)}\nm{h}_{L^2(\Omega)}
    +\nm{\nabla V}_{\infty}\nm{h}_{L^2(\Omega)}^2
    \right]
    \\[\alnspace]
    & \leq
    \Lambda \,L_1(\Phi,M)\, e^{\nm{V}_{\infty}}\,
    \nm{\nabla \us}_{L^\infty(\Omega)}\,
    C_P \,(1+C_P\nm{\nabla V}_{\infty})\nm{\nabla h}_{L^2(\Omega)}^2,
\end{align*}
as well as
\begin{align*}
    \jj^{\mathrm{III}}
    &=\int_{\Omega} 
    \sum_{ijk} e^{V_i}
    \left[ \nabla h_i + h_i\nabla V_i \right]
    \cdot D_{ij}
     \Psi_{jk}(u) h_k
    \d x
    \\[\alnspace]
    & \leq
    \Lambda\, L_0(\Psi,M) \,e^{\nm{V}_{\infty}} \,
    \left[ 
    \nm{\nabla h}_{L^2(\Omega)}\,\nm{h}_{L^2(\Omega)}
    \,+\,\nm{\nabla V}_{\infty}\,\nm{h}_{L^2(\Omega)}^2
    \right]
    \\
    & \leq
    \Lambda\, L_0(\Psi,M) \,e^{\nm{V}_{\infty}}\,C_P\,(1+C_P\nm{\nabla V}_{\infty})\nm{\nabla h}_{L^2(\Omega)}^2,
    \\
    & \quad
    \\[\alnspace]
    \jj^{\mathrm{IV}}
    &=\int_{\Omega} 
    \sum_{ijk} e^{V_i}
    \left[ \nabla h_i + h_i\nabla V_i \right]
    \cdot D_{ij}
    (\Psi_{jk}(u) - \Psi_{jk}(\us))\us_{k}
    \d x
    \\[\alnspace]
    & \leq
    \Lambda \,L_1(\Psi,M)\, e^{\nm{V}_{\infty}} \,
    \nm{\us}_{L^\infty(\Omega)}\,
    \left[ 
    \nm{\nabla h}_{L^2(\Omega)}\nm{h}_{L^2(\Omega)}
    +\nm{\nabla V}_{\infty}\nm{h}_{L^2(\Omega)}^2
    \right]
    \\[\alnspace]
    & \leq
    \Lambda\, L_1(\Psi,M) \,e^{\nm{V}_{\infty}}\,
    \nm{\us}_{L^\infty(\Omega)}
    \,C_P\, (1+C_P\nm{\nabla V}_{\infty})\nm{\nabla h}_{L^2(\Omega)}^2.
\end{align*}
Since $\exp(V_i) \geq 1$, combining equations \eqref{eq:testhV} and \eqref{eq:estim1} yields
\begin{align}
    \label{eq:hLtwoestimate}
    \begin{split}
        \ddt \int_{\Omega} \frac{1}{2} |h|^2 \d x
        \,+\,
        &\int_{\Omega} \lambda
        |\nabla h + h\,\diag(\nabla V) |^2
        \d x
        \\[\alnspace]
        &\leq
        \delta
        \Lambda e^{\nm{V}_{\infty}}
        \Gamma_M \,(1+C_P\nm{\nabla V}_{\infty})
        \nm{\nabla h}_{L^2(\Omega)}^2
        \\[\alnspace]
        & \leq
         \delta
        \Lambda e^{\nm{V}_{\infty}}
        \Gamma_M \,
        \frac{1+C_P\nm{\nabla V}_{\infty}}{1-C_P\nm{\nabla V}_{\infty}}
        \int_{\Omega}
        |\nabla h + h\,\diag(\nabla V) |^2
        \d x,
    \end{split}
\end{align}
where we used Remark \ref{rem:P} in the Appendix and the constant $\Gamma_M$ is given by
\begin{align}
    \label{const:gamma}
    \Gamma_M := 
    \max \big\{ &
    L_0(\Phi,M),\;\;
    L_1(\Phi,M)
    \nm{\nabla \us}_{L^\infty(\Omega)}C_P,\;\;
    L_0(\Psi,M) C_P,\;\;
    L_1(\Psi,M) 
    \nm{\us}_{L^\infty(\Omega)}
    C_P
    \big\}.
\end{align}
By assumption  \eqref{eq:deltabound2}, we have the bound
$$
    \delta
    \Lambda e^{\nm{V}_{\infty}}
    \Gamma_M
    \frac{1+C_P\nm{\nabla V}_{\infty}}{1-C_P\nm{\nabla V}_{\infty}} \leq \frac{\lambda}{2}.
$$
This bound in conjunction with estimate \eqref{eq:hLtwoestimate} yields
$$
    \ddt \int_{\Omega}  |h|^2 \d x
    +
    \lambda \int_{\Omega} 
    |\nabla h + h\,\diag(\nabla V) |^2
    \d x
    \leq 0.
$$
Letting $g=h \exp(V)$, we obtain
$$
    \ddt \int_{\Omega}  e^{-2V}|g|^2 \d x
    +
    \lambda \int_{\Omega} 
    e^{-2V}
    |\nabla g |^2
    \d x
    \leq 0.
$$
Using Poincar\'e's inequality for $g$  and integrating in time over the interval $(t_1,t_2)\subseteq [0,\infty)$ yields
$$
    \int_{\Omega}  |g(t_2)|^2 \d x
    +
    \frac{\lambda}{C_P^2} \int_{t_1}^{t_2}\int_{\Omega} 
    | g(\tau) |^2
    \d x \d \tau
    \leq e^{4\nm{V}_\infty} \int_{\Omega}  |g(t_1)|^2 \d x.
$$
Thanks to Gronwall's inequality we obtain the following continuous-dependence estimate
$$
    \int_{\Omega}  |g(t_2)|^2 \d x
    \leq 
    \exp\left(4\nm{V}_\infty- \frac{\lambda}{C_P^2} (t_2 - t_1)\right) \int_{\Omega}  |g(t_1)|^2 \d x.
$$
Finally, let us express this continuous-dependence estimate in terms of the original variables. To this end we recall that $$u - u^* =  h = g \exp(-V).$$ Substituting this relation into the preceding inequality, we deduce that
\begin{align*}
    \nm{u(t_2) - \us}_{L^2(\Omega)}
    &\leq
    \nm{h(t_2) e^V}_{L^2(\Omega)}
    \\[\alnspace]
    &\leq \exp\left(2\nm{V}_\infty- \frac{\lambda}{2C_P^2} (t_2 - t_1)\right) 
    \nm{h(t_1) e^V}_{L^2(\Omega)}
    \\[\alnspace]
    &\leq 
    \exp\left(3\nm{V}_\infty- \frac{\lambda}{2C_P^2} (t_2 - t_1)\right)
    \nm{u(t_1) - \us}_{L^2(\Omega)},
\end{align*}
and this concludes the proof.
\end{proof}

\section{Study of an asymptotic gradient flow for one or two species}\label{sec:agf}
 
In this section we focus on an asymptotic gradient flow system describing the macroscopic dynamics of diffusing species in the presence of volume-exclusion constraints for the agents or particles.
Such systems often have small cross-diffusion terms and therefore fall into the class of problems introduced in Section \ref{sec:model}. We will discuss the equilibration behaviour of their solutions and establish sharper $L^{\infty}$-bounds (recall that, thanks to Proposition \ref{prop:ABC}, we already know that solutions are essentially bounded if $\delta < \delta_0$).

\subsection{A cross-diffusion model for two interacting diffusing  species}\label{sec:agf:twospecies}

We consider the asymptotic gradient flow  system for two species that was introduced by Bruna and Chapman in \cite{Bruna:2012wu}. It describes the behaviour of the densities of two different types of hard spheres and it was derived from a stochastic system of hardcore-interacting Brownian particles using the method of matched asymptotic expansions. Few analytic results are available, and the little results that exist seem to rely on the assumption that the asymptotic gradient flow system is close to its stationary state (see \cite{bruna2017cross}) or that it is close to being a diagonal, decoupled linear problem; cf. \cite{alasio2019global}.

The asymptotic gradient flow system presented in \cite{Bruna:2012wu} can be written in the general form of \eqref{main-general}. In particular:

\begin{equation}
\label{eq:amatrix}
    \amatrix(x,u) = \left(
    \begin{array}{cc}
    	D_{1}&0\\[\alnspace]
    	0 & D_{2}
    \end{array}
    \right)
     \left[I +  
    \left(
    \begin{array}{cc}
    	\delta_{1,1} u_1 - \delta_{1,2} u_2 & \delta_{1,3} u_1\\[\alnspace]
    	\delta_{2,3} u_2 & \delta_{2,1} u_2 - \delta_{2,2} u_1
    \end{array}
    \right)
    \right],
\end{equation}
and
\begin{equation}\label{eq:drift}
	\drift(x, u)=
	\left(
 	\begin{array}{cc}
 		D_1 & 0\\[\alnspace]
 		0 & D_2
 	\end{array}
 	\right)
	\left(
	\begin{array}{cc}
		\nabla V_1 & (\delta_{2,2}\nabla  V_2-\delta_{1,2} \nabla V_1 )u_1\\[\alnspace]
		(\delta_{1,2}\nabla  V_1-\delta_{2,2} \nabla V_2 )u_2  &\nabla V_2
	\end{array}
	\right),
\end{equation}
where $\delta_{i,j}$ for $i\in\{1,2\}, \, j\in \{1,2,3\}$ are asymptotically small parameters with $\max_{i,j} \delta_{i,j}=:\delta <\delta_0$. The coefficients $\delta_{i,j}$ depend on the size of the particles and on the corresponding diffusion coefficients $D_1, D_2\geq 0$, for their specific values we refer to \cite{Bruna:2012wu}.
In \cite{bruna2017cross} it was noted that system \eqref{main-general} with the nonlinearities in \eqref{eq:amatrix} and \eqref{eq:drift} has an \emph{asymptotic gradient flow structure}.
In particular, it is well known that certain cross-diffusion systems possess a formal gradient-flow structure, that is, they can be formulated as
\begin{equation}
	\label{grad_flow_general}
	\partialt u - \nabla\cdot \left ( \mm \nabla \frac{\delta E}{\delta u} \right) = 0, 
\end{equation}
where $\mm \in \mathbb R^{m\times m}$ is usually referred to as the \emph{mobility matrix} and $\delta E/\delta u$ is the \emph{variational derivative} of the entropy functional, $E$. 
In order to highlight the connection between gradient flows and asymptotic gradient flows, let us consider the following entropy functional
\begin{subequations}
	\label{eq:grad_flow1}
\begin{equation}
	E[u] = \int_{\Omega} \bigg [ u_1\log u_1 + u_2\log u_2 + u_1 V_1 + u_2 V_2 
	+\frac{1}{2} \left(  \delta_{1,1} u_1^2 +2(d-1)(\delta_{1,2}+\delta_{2,2}) u_1 u_2 + \delta_{2,1} u_2^2\right)  \bigg ] \d x,\label{eq:entropy_general}
\end{equation}
as well as the mobility matrix 
\begin{equation}
    \mm(u)=\begin{pmatrix}D_1 u_1(1-\delta_{1,2} u_2) & D_1 \delta_{2,2} u_1 u_2\\[\alnspace]
    D_2 \delta_{1,2} u_1 u_2 & D_2 u_2(1- \delta_{2,2} u_1)
    \end{pmatrix}.\label{eq:mobility_general}
\end{equation}
\end{subequations}
The cross-diffusion system \eqref{main-general} with diffusion and drift matrices \eqref{eq:amatrix} and \eqref{eq:drift}, can be rewritten as 
\begin{equation}\label{gradflow_generalasy}
    \partialt u = \nabla\cdot\left ( \mm \nabla \frac{\delta E }{\delta u} -G\right),
\end{equation}

where $G$ is given by 
\begin{equation}
   G= \begin{pmatrix}
 D_1u_1u_2\left(\left(\delta_{1,1}\delta_{1,2}-(d-1)(\delta_{1,2}+\delta_{2,2})\delta_{1,2}\right)\nabla u_1+((d-1)\delta_{1,2}(\delta_{1,2}+\delta_{2,2})-\delta_{2,1}\delta_{2,2} )\nabla u_2 \right) \\[\alnspace]
 D_2u_1u_2\left(\left(\delta_{2,1}\delta_{2,2}-(d-1)(\delta_{1,2}+\delta_{2,2})\delta_{2,2}\right)\nabla u_2+((d-1)\delta_{2,2}(\delta_{1,2}+\delta_{2,2})-\delta_{1,1}\delta_{1,2} )\nabla u_1 \right) 
\end{pmatrix}.
\end{equation}
For a precise definition of all the parameters we refer to the derivation of the model in \cite{Bruna:2012wu}. 

The discrepancy between the system above and 
the gradient-flow induced by \eqref{eq:grad_flow1} is of order $\delta^{2}$, and one can show that they actually coincide if and only if both species have the same diffusivities $D_1 = D_2 \geq 0$ as well as the same particle sizes, cf. \cite{bruna2017cross}. In particular, having  the same size and same diffusivities implies that $\delta_{i,1}=(d-1)(\delta_{1,2}+\delta_{2,2})$ for $i=1,2$ as well as $\delta_{1,j}=\delta_{2,j}$ for $j=1,2,3$, cf. \cite{bruna2017cross} and therefore $G \equiv 0$. Moreover, one can show that this is the only possibility for $G$ to vanish. We refer to \cite{bruna2017cross} for more details as well as the existence proof of unique stationary solutions in both cases. More specifically, we recall the following result:
\begin{prop}
\label{prop:stationary}
    Let $\delta =  \max\{\delta_{i,j}\}$, assume $0<\delta<\delta_0$ and suppose that the potentials in \eqref{main-general} satisfy $V_i\in H^3(\Omega)$. Then Theorem \ref{lem:energysys} applies to systems \eqref{grad_flow_general} and \eqref{gradflow_generalasy} and they admit unique stationary states in $H^3(\Omega)$, which we denote by $u_\infty$ and $u_*$, respectively.
    Additionally, there exists a constant $C>0$ such that
    $$
        \nm{u_* - u_\infty}_{H^3(\Omega)} \leq C\delta^2.
    $$
\end{prop}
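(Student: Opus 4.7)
The plan has three steps: verify that hypotheses \textbf{H1}--\textbf{H4} hold for both systems so that Theorem \ref{lem:energysys} applies; obtain existence and uniqueness of stationary states via a perturbation-from-$\delta=0$ argument; and establish the $O(\delta^2)$ comparison by exploiting that the discrepancy $G$ in \eqref{gradflow_generalasy} is quadratic in the small parameters $\delta_{i,j}$.

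Step one is mostly bookkeeping. For both \eqref{grad_flow_general} and \eqref{gradflow_generalasy}, the diffusion tensor is $D=\diag(D_1,D_2)$ with constant positive entries, so \textbf{H1} holds with $\lambda=\min\{D_1,D_2\}$ and $\Lambda=\max\{D_1,D_2\}$. The nonlinearities $\Phi$ and $\Psi$ are polynomial in $u$ and vanish at $u=0$ (including the extra cubic contribution from $G$ in the asymptotic case), so \textbf{H2} is immediate. Since $V_i\in H^3(\Omega)$ and $d\leq 3$, Sobolev embedding yields $V_i\in C^{1,\eps}(\bar\Omega)$, which suffices for \textbf{H3}; \textbf{H4} is assumed directly. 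After shrinking $\delta_0$ if necessary so that \eqref{eq:deltabound2} is satisfied, Theorem \ref{lem:energysys} gives exponential $L^2$-convergence of solutions of both systems to stationary states, which in particular implies uniqueness.

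Step two constructs the stationary states via the implicit function theorem around $\delta=0$. At $\delta=0$ both systems reduce to the linear decoupled Fokker--Planck system \eqref{eq:ModelZero}, whose stationary state at prescribed mass is the Boltzmann profile $\bar u_i\propto e^{-V_i}$, lying in $H^3(\Omega)$ by elliptic regularity and the assumption $V_i\in H^3$. The stationary equations are second-order elliptic systems in divergence form whose coefficients depend smoothly on $(u,\delta)$; the Fr\'echet derivative at $(\bar u,0)$ is the diagonal self-adjoint operator $\phi_i\mapsto -\nabla\cdot(D_i e^{-V_i}\nabla(e^{V_i}\phi_i))$ with Neumann condition, which is invertible on the mass-zero subspace of $H^3(\Omega)$. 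The implicit function theorem then produces unique branches $\delta\mapsto u_\infty(\delta)$ and $\delta\mapsto u_*(\delta)$ in $H^3(\Omega)$ matching $\bar u$ at $\delta=0$ and depending smoothly on $\delta$.

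For step three, substitute $u_\infty$ into the stationary version of \eqref{gradflow_generalasy}: since $u_\infty$ annihilates the gradient-flow part, the only residual is $-\nabla\cdot G[u_\infty]$. Inspection of \eqref{eq:grad_flow1} and the definition of $G$ shows that every term in $G$ is a product of at least two of the parameters $\delta_{i,j}$, so $\nm{\nabla\cdot G[u_\infty]}_{H^1(\Omega)}\leq C\delta^2$ with $C$ depending on $\nm{u_\infty}_{H^3}$. Setting $w:=u_*-u_\infty$ and subtracting the two stationary equations, $w$ satisfies an elliptic system of the form $Lw=-\nabla\cdot G[u_\infty]+N(w)$, where $L$ is the linearization of the stationary asymptotic problem at $u_\infty$ (invertible on the mass-zero subspace for small $\delta$ by continuity from step two) and $N(w)$ is at least quadratic in $w$. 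Elliptic regularity combined with a standard fixed-point argument then yields $\nm{w}_{H^3(\Omega)}\leq C\delta^2$ as claimed. The main obstacle is precisely this last step: uniform invertibility of $L$ as $\delta\to 0$ and absorption of the nonlinear remainder; both follow from the coercivity of the linearization at $\delta=0$ established in step two.
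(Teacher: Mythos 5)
Your proposal cannot be compared against a proof in the paper itself, because the paper does not prove Proposition \ref{prop:stationary}: it is recalled from \cite{bruna2017cross}, where the stationary states of the gradient-flow and asymptotic-gradient-flow systems are constructed and compared. That said, your outline is sound and follows essentially the same perturbative strategy as the cited work: existence and local uniqueness of $u_\infty(\delta)$ and $u_*(\delta)$ as small-$\delta$ branches emanating from the Boltzmann state $\bar u_i\propto e^{-V_i}$, and the $O(\delta^2)$ comparison obtained from the fact that every entry of $G$ carries a product of two parameters $\delta_{i,j}$, so that $u_\infty$ solves the stationary asymptotic problem up to a residual of size $\delta^2$ which is then inverted through the (uniformly invertible) linearisation. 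Two points deserve more care in your write-up. First, the logical order between your Steps 1 and 2: Theorem \ref{lem:energysys} does not produce stationary states, it \emph{assumes} $\us\in W^{1,\infty}(\Omega)$; so you must first construct the branch $\delta\mapsto \us(\delta)$ in $H^3(\Omega)\hookrightarrow W^{1,\infty}(\Omega)$ (as you do in Step 2), note that its $W^{1,\infty}$-norm is bounded uniformly for small $\delta$, and only then can a $\delta_0$ independent of the state be chosen so that \eqref{eq:deltabound2} holds; uniqueness via the convergence estimate is then valid within this class (your IFT argument already gives local uniqueness, which is what is needed). Second, when you check \textbf{H2} you should acknowledge that the drift matrix \eqref{eq:drift} has off-diagonal entries proportional to $\nabla V_i(x)\,u_j$, i.e.\ the perturbation $\Psi$ depends on $x$ as well as $u$; this is a harmless extension of the framework (the estimates in Theorem \ref{lem:energysys} only use $L^\infty$-bounds on these coefficients), but calling it ``immediate'' glosses over it. With these adjustments your argument is a faithful reconstruction of the result recalled from \cite{bruna2017cross}.
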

As a consequence, we deduce that the stationary states $u_\infty$ and $u_*$ are in the space $W^{1,\infty}(\Omega)$; this fact will be useful in the next subsections.

\subsection{The scalar problem with a ``frozen'' component}

In the following we focus on a special case of system \eqref{gradflow_generalasy}, namely 
\begin{align}
    \label{eq:main-equation}
    \left\{
    \begin{array}{rl}
    \partialt r 
    -\nabla \cdot \left\{(1 + \delta_1 r - \delta_2 b) \nabla r + \delta_3 r \nabla b+r(1-\delta_2 b)\nabla V\right\}\remspace
    &=0
    \quad \text{ in } \Omega,
    \\[\eqnspace]
     \nu \cdot \left\{(1 + \delta_1 r - \delta_2 b) \nabla r + \delta_3 r \nabla b+r(1-\delta_2 b)\nabla V\right\}\remspace
    &= 0
    \quad \text{ on } \de\Omega,
    \\[\eqnspace]
    r(0,x)\remspace{}
    &=
    r_0(x),
    \end{array}
    \right.
\end{align}
where $r=r(t,x)$ describes the density of the mobile species diffusing in the presence of a given immobile species  $b(x)$ (the frozen species).
Notice that we have simplified the notation by letting $\delta_j = \delta_{i,j}$ and $\delta = \max_j \delta_j$ for $i,j=1,2$. 
We chose to use a different notation to emphasise that \eqref{eq:main-equation} is a scalar equation. It can be obtained from system \eqref{gradflow_generalasy} by setting $r=u_1, b=u_2$ and setting the diffusion coefficient as well as the external potential of the species $b$ to zero. 
Note that this equation can be derived as a macroscopic limit of a stochastic system with two types of particles, one species diffusing in a domain with fixed obstacles of a certain size interacting via hard-core collisions. 
Hence, the particles diffuse in a \quot{perforated domain} with obstructions distributed according to the density $b(x)$; cf. \cite{bruna2017asymptotic} for more details. 

\begin{rem}
As equation \eqref{eq:main-equation} is a special case of \eqref{gradflow_generalasy}, namely with immobile blue particles, Proposition \ref{prop:stationary} ensures that
equation \eqref{eq:main-equation} admits a stationary state $r^*\in H^3(\Omega)$ provided that that $\delta$ is sufficiently small, that $b, V \in H^{3}(\Omega)$ and that for some $M_0>0$ it holds that
$$
	0\leq r_0(x)\leq M_0.
$$ 
From the discussion in Section \ref{sec:agf:twospecies} we know that equation \eqref{eq:main-equation} does not exhibit a full gradient flow structure, but only an asymptotic one. Hence, taking the entropy functional 
\begin{align}
    \label{entropy_fp}
    E[r]&=\int_\Omega r\log r +\delta_1 \frac{r^2}{2}+\delta_3 r b +rV\, \d x,
\end{align}
and mobility matrix
$    \mathcal{M}(r)=r(1-\delta_2 b),$
we observe that equation \eqref{eq:main-equation} can be cast in the form \eqref{gradflow_generalasy}
where $G(r,\delta)=rb (\delta_1 \delta_2\nabla r+\delta_2 \delta_3 \nabla b).$
\end{rem}

\begin{rem}
As a consequence of Proposition \ref{prop:ABC}, we have that Problem \ref{eq:main-equation} admits bounded, regular solutions in the space $W$, however, in the rest of this section we will need less regularity for the solutions and we will work in a weaker \quot{parabolic} space, $Z$, that we introduce in Definition \ref{def:parabolic}.
\end{rem}

\begin{defi}[Parabolic space]
\label{def:parabolic}
Consider the space $Z = C^0([0,T]; L^2(\Omega))\cap L^2(0,T;H^1(\Omega))$.
We use the so-called \quot{parabolic norm} 
$$
    \nm{f}_{Z}^2
    = \|f\|_{L^\infty(0,T, L^2(\Omega))}^2 + \|\nabla f\|_{L^2(0,T; L^2(\Omega))}^2.
$$
\end{defi}

Similarly to Theorem \ref{lem:energysys}, we obtain a result on exponential convergence to equilibrium. Note that in the scalar case, we can avoid making the smallness assumption on the $\nabla V(x)$ (see \eqref{eq:deltabound2}), as well as the use of the $L^\infty$-bound on $r$.
\begin{theor}[Exponential convergence to equilibrium]
\label{thm1}
Let $d\in\{1,2,3\}$ and assume that $b(x), V(x) \in H^3(\Omega)$ are such that $V_l\leq V(x)\leq V_u$, for some $V_l,V_u \in \R$ and for all $x\in \Omega$.
Moreover, if $d=1,2$, we suppose that $r\in Z$ is a weak solution of problem \eqref{eq:main-equation} such that $\|r\|_Z \leq L$, for some constant $L>0$; 
if $d=3$ we also suppose that $0\leq r\leq M$ for some constant $M>0$ and a.e. $(t,x)\in[0,T]\times\Omega$. Let the stationary state $r^* \in W^{1,\infty}(\Omega)$.
Then the following inequality holds for any $t_1, t_2 \geq 0$: 
\begin{align*}
	\|r(t_2, \cdot) - r^*\|_{L^2(\Omega)} \leq  \|r(t_1,\cdot) - r^*\|_{L^2(\Omega)}
	\exp\left(
	\frac{3}{2}(V_u - V_l) - \frac{t_2-t_1}{2C_P^2}\right),
\end{align*}
for $\delta$ sufficiently small, satisfying relation \eqref{eq:kdelta} in the proof.
\end{theor}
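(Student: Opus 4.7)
The plan is to mimic the proof of Theorem \ref{lem:energysys} with adaptations exploiting the scalar nature of \eqref{eq:main-equation} and the fact that $b$ is frozen. I would set $h := r - r^*$, observe that $\int_{\Omega} h(t,x)\,\mathrm{d}x = 0$ for all $t$ (both $r$ and $r^*$ are conserved with common initial mass under no-flux boundary conditions), and write the equation satisfied by $h$ by subtracting the stationary version of \eqref{eq:main-equation}:
\begin{equation*}
\partial_t h = \nabla \cdot \bigl\{ (1 - \delta_2 b)\nabla h + \delta_1 \nabla(r h + r^* h) + \delta_3 h \nabla b + (1-\delta_2 b) h \nabla V \bigr\} + \text{(terms involving } r - r^* = h\text{)},
\end{equation*}
grouped so that the leading-order part is the linear Fokker–Planck operator $\nabla \cdot(\nabla h + h\nabla V)$ and the remainder is of order $\delta$. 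Testing against $h\,e^{V}$ will produce, after integration by parts, a term $\int_{\Omega} e^V |\nabla h + h\nabla V|^2\,\mathrm{d}x$ (which uses that the diffusion coefficient here is $1$ in place of the general $\lambda$), plus four $O(\delta)$ remainder integrals analogous to $\mathcal{J}^{\mathrm{I}},\dots,\mathcal{J}^{\mathrm{IV}}$ in Theorem \ref{lem:energysys}.

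Next I would bound each $\delta$-remainder using Remarks \ref{rem:L}, \ref{rem:V} and \ref{rem:P}. Since $b,V \in H^3(\Omega) \hookrightarrow W^{1,\infty}(\Omega)$ and $r^* \in W^{1,\infty}(\Omega)$ by Proposition \ref{prop:stationary}, the factors $\|b\|_\infty$, $\|\nabla b\|_\infty$, $\|r^*\|_\infty$, $\|\nabla r^*\|_\infty$ are controlled. The only non-trivial factor is $r$ itself, and this is where the dimensional split enters. For $d=3$ the hypothesis $r\leq M$ plays the same role as the $L^\infty$ bound \eqref{eq:Mbound2} in the system case. For $d=1,2$ I would use Gagliardo–Nirenberg/Sobolev embedding ($H^1\hookrightarrow L^\infty$ in 1D, $H^1 \hookrightarrow L^p$ for every $p<\infty$ in 2D) to convert $\|r\|_Z \leq L$ into control of $\|r(t)\|_{L^p}$, uniformly in $t$, and absorb terms of the form $\delta \int r|\nabla h|^2\,\mathrm{d}x$ into the dissipation using Hölder together with the elliptic piece $\|\nabla h\|_{L^2}^2$. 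This yields a smallness condition on $\delta$ of the form
\begin{equation}
\label{eq:kdelta}
\delta \cdot K(L, M, \|b\|_{W^{1,\infty}}, \|r^*\|_{W^{1,\infty}}, \|V\|_{W^{1,\infty}}, |\Omega|) \leq \tfrac{1}{2},
\end{equation}
so that after absorption one obtains
\begin{equation*}
\frac{\mathrm{d}}{\mathrm{d}t}\int_{\Omega} e^{V} h^2 \,\mathrm{d}x + \tfrac{1}{2} \int_{\Omega} e^{V} |\nabla h + h\nabla V|^2\,\mathrm{d}x \leq 0.
\end{equation*}

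With this differential inequality in hand I would perform the substitution $g := h\,e^V$ to rewrite it in the symmetric form $\frac{\mathrm{d}}{\mathrm{d}t}\int_\Omega e^{-V}|g|^2\,\mathrm{d}x + \tfrac{1}{2}\int_\Omega e^{-V}|\nabla g|^2\,\mathrm{d}x \leq 0$, use the two-sided bound $e^{-V_u}\leq e^{-V(x)}\leq e^{-V_l}$ to pull the weights outside, and apply the Poincaré-type inequality from Lemma \ref{lem:poincare}. Here the scalar setting pays off: since $\int_\Omega h\,\mathrm{d}x = 0$ I can invoke a weighted Poincaré (zero weighted-mean $\int g e^{-V}\,\mathrm{d}x = 0$) directly, so I do \emph{not} need the restriction $C_P \|\nabla V\|_\infty <1$ from Remark \ref{rem:P} that appears in the system case. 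Gronwall then gives exponential decay of $\int_\Omega e^{-V}|g|^2\,\mathrm{d}x$ with rate $1/C_P^2$, and translating back via $h = g e^{-V}$ and $\|\cdot\|_{L^2}$ picks up a prefactor of at most $e^{(3/2)(V_u - V_l)}$, yielding the claimed estimate.

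The main obstacle will be the $O(\delta)$ absorption in the low-dimensional regime $d=1,2$: converting the $Z$-control of $r$ into pointwise-in-time $L^p$ bounds that can dominate the cross-terms uniformly in $t$ requires a careful Gagliardo–Nirenberg interpolation between $\|r(t)\|_{L^2}$ and $\|\nabla r(t)\|_{L^2}$ (the latter only controlled in time-averaged sense), after which one uses Young's inequality to trade powers of $\nabla h$ against powers of $\nabla r$ and close the estimate. The remaining steps, including the symmetrisation and Gronwall argument, are parallel to Theorem \ref{lem:energysys} and essentially routine once \eqref{eq:kdelta} has been identified.
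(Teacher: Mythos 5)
Your outline follows the same broad strategy as the paper's proof: a weighted $L^2$ energy estimate for the perturbation, Poincar\'e for the mean-zero perturbation without the restriction $C_P\nm{\nabla V}_\infty<1$, a dimensional split (Ladyzhenskaya/Gagliardo--Nirenberg for $d\leq 2$ versus the $L^\infty$ bound for $d=3$), Gronwall, and the weight bookkeeping that produces the factor $e^{\frac{3}{2}(V_u-V_l)}$. The paper simply performs the change of variables $w=re^{V}$ first (equations \eqref{equ_potential}--\eqref{equ_linstab}), which is equivalent to your choice of test function $h\,e^{V}$. However, two specific steps of your plan would fail as written. First, your grouping of the nonlinear diffusion difference as $\delta_1\nabla(rh+r^*h)$ (which should in any case carry a factor $\tfrac12$, since $r\nabla r-r^*\nabla r^*=\tfrac12\nabla\big((r+r^*)h\big)$) produces, upon expansion, a term $h\nabla r$ involving the gradient of the \emph{evolving} solution; this is only controlled in the time-averaged sense through $\nm{r}_Z\leq L$ and cannot be paired with $\nabla(he^{V})$ without an $L^\infty$ bound on $h$. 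The workable grouping --- delivered automatically by the paper's change of variables --- is $r\nabla r-r^*\nabla r^*=r\nabla h+h\nabla r^*$, so that only $\nabla h$ and $\nabla r^*\in L^\infty(\Omega)$ (Proposition \ref{prop:stationary}) appear.

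Second, the term $\delta_1\int_\Omega e^{V} r\,|\nabla h|^2\,\d x$ should not, and cannot, be ``absorbed by H\"older'': $\nm{r}_Z\leq L$ yields uniform-in-time control of $\nm{r(t)}_{L^2(\Omega)}$ only (the $H^1$ part of the $Z$-norm is time-integrated), so no uniform-in-time $L^p$ bound with $p>2$ is available, and even granted one, H\"older would leave a factor $\nm{\nabla h}_{L^{2p/(p-1)}(\Omega)}^2$ that the dissipation does not control. The paper's treatment is structural rather than perturbative: since $r\geq 0$, this term is part of the nonnegative diffusion coefficient $1+\delta_1 r-\delta_2 b$ and is simply retained with its favourable sign, the smallness condition \eqref{eq:kdelta} being $K(\delta)<\tfrac12-\delta_2 b$. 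The genuinely quadratic error is the cross term $\delta_1\int e^{-2V}h^2\,\nabla V\cdot\nabla h\,\d x$ (in the transformed variables), which for $d=2$ is estimated via Ladyzhenskaya applied to $h$, $\nm{h}_{L^4(\Omega)}^2\leq C_{GN}\nm{h}_{L^2(\Omega)}\nm{\nabla h}_{L^2(\Omega)}$, deliberately keeping one factor of $\nm{\nabla h}_{L^2(\Omega)}$ so that the gradient appears exactly squared and is absorbed by the time-integrated dissipation (estimate \eqref{eq:someintermediateestimate2}); for $d=3$ one instead uses $0\leq r\leq M$ and the Sobolev embedding. Your closing paragraph gestures at this interpolation, but frames it as producing uniform-in-time $L^p$ bounds for $r$ and as trading against powers of $\nabla r$, neither of which is the available or the needed mechanism; once these two points are corrected, the rest of your argument (symmetrisation, Poincar\'e, Gronwall, and the passage back to $r$) coincides with the paper's.
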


\begin{proof}
In order to transform the terms in front of the potential $V$ to order $\delta$, we perform the change of variables $w= r\exp(V)$ implying $\nabla r =(\nabla w-w\nabla V) \exp(-V)$ and 
\begin{align}\label{equ_potential}
    e^{-V}\partialt w&=\nabla \cdot \left\{e^{-V}((1+\delta_1 e^{-V} w-\delta_2 b)\nabla w-\delta_1 e^{-V}w^2\nabla V+\delta_3 w\nabla b)\right\}.
\end{align}
Now let us consider a perturbation, $h$, of the stationary state $w^*$ of \eqref{equ_potential} (which exists by Proposition \ref{prop:stationary}), namely 
\begin{align*}
	w(x,t) = w^*(x) + h(x,t),
\end{align*}
where $\int_\Omega h(x,t) = 0$ since the mass is conserved.
We observe that $h$ satisfies the following equation:
\begin{align}\label{equ_linstab}
    e^{-V}\partialt h&= \nabla \cdot \left\{e^{-V} \left[(1+\delta_1 e^{-V}(h+w^*)-\delta_2 b)\, \nabla h\;+\;(\delta_1 e^{-V}\nabla w^*-\delta_1 e^{-V} (2w^*+h)\nabla V +\delta_3  \nabla b) \,h\right]\right\}.
\end{align}
We test equation \eqref{equ_linstab} against $h$ to obtain 
\begin{align}
    \label{equ4}
    \begin{aligned}
        \half \ddt \int_\Omega e^{-V} h^2 \d x 
        &=-\int_\Omega e^{-V}  (1+\delta_1 e^{-V}(h+w^*)-\delta_2 b)|\nabla h|^2\,\d x\\[\alnspace]
        & \quad-\int_\Omega e^{-V}(\delta_1 e^{-V}\nabla w^*-\delta_1 e^{-V} (2w^*+h)\nabla V +\delta_3  \nabla b) h\cdot \nabla h \,\d x.
    \end{aligned}
\end{align}

Integration in time over $(t_1,t_2)\subset [0,T]$ yields

\begin{align}
    \label{equ5}
    \begin{aligned}
        \half\int_\Omega e^{-V} h^2(t_2) \d x 
        &=-\int_{t_1}^{t_2}\int_\Omega e^{-V}  (1+\delta_1 e^{-V}(h+w^*)-\delta_2 b)|\nabla h|^2\,\d x\, \d t\\[\alnspace]
        & \quad-\int_{t_1}^{t_2}\int_\Omega e^{-V}(\delta_1 e^{-V}\nabla w^*-\delta_1 e^{-V} (2w^*+h)\nabla V +\delta_3  \nabla b) h\cdot \nabla h \,\d x
        + \half\int_\Omega e^{-V} h^2(t_1) \d x.
    \end{aligned}
\end{align}

Using the fact that $\nabla V, w^*, \nabla w^*, \nabla b \in L^\infty(\Omega),$ 
we deduce that 
\begin{align}
\label{eq:someintermediateestimate}
\begin{split}
-&\int_{t_1}^{t_2}\int_\Omega e^{-V}
\left(\delta_1 e^{-V}\nabla w^*-2\delta_1 e^{-V} w^*\nabla V +\delta_3  \nabla b\right) h\cdot \nabla h   \d x\, \d t
\\[\alnspace]
&\leq \delta\, e^{-V_l}
\left[
e^{-V_l}\|\nabla w^*\|_{L^\infty(\Omega)}+ 
2e^{-V_l}
\|w^*\|_{L^\infty(\Omega)}
\|\nabla V\|_{L^\infty(\Omega)}
+\|\nabla b\|_{L^\infty(\Omega)}
\right]
\int_{t_1}^{t_2} \|h\|_{L^2(\Omega)}
\|\nabla h\|_{L^2(\Omega)} \d t
\\[\alnspace]
&\leq \delta\, K_1
\int_{t_1}^{t_2}\|\nabla h\|_{L^2(\Omega)}^2 \d t,
\end{split}
\end{align}
where
$K_1 = C_P\left[
e^{-V_l}\|\nabla w^*\|_{L^\infty(\Omega)}+ 
2e^{-V_l}
\|w^*\|_{L^\infty(\Omega)}
\|\nabla V\|_{L^\infty(\Omega)}
+\|\nabla b\|_{L^\infty(\Omega)}
\right]$.
Furthermore, we have that
\begin{align}
\label{eq:someintermediateestimate2}
\begin{split}
\int_{t_1}^{t_2}\int_\Omega 
\delta_1 e^{-2V} h^2 \nabla V \cdot \nabla h   \d x\, \d t 
&\leq 
\delta e^{-2V_l}\|\nabla V\|_{L^\infty(\Omega)}
\int_{t_1}^{t_2}\int_\Omega 
h^2 |\nabla h|   \d x\, \d t
\\[\alnspace]
& \leq
\delta e^{-2V_l}\|\nabla V\|_{L^\infty(\Omega)}
\int_{t_1}^{t_2}\nm{h}_{L^4(\Omega)}^2\nm{\nabla h}_{L^2(\Omega)}\, \d t
\\[\alnspace]
& \leq
\delta e^{-2V_l}\|\nabla V\|_{L^\infty(\Omega)}
C_{GN}\int_{t_1}^{t_2}\nm{h}_{L^2(\Omega)}\nm{\nabla h}_{L^2(\Omega)}^2\, \d t
\\[\alnspace]
& \leq
\delta K_2 \int_{t_1}^{t_2}\nm{\nabla h}^2_{\Omega} \d t,
\end{split}
\end{align}
where, for $d=2$, we set $K_2 = e^{-2V_l}\|\nabla V\|_{L^\infty(\Omega)}
C_{GN} L$ and we used a special case of Gagliardo-Nirenberg interpolation inequality known as Ladyzhenskaya's inequality to bound $\nm{h}_{L^4(\Omega)}$, as well as $\|h\|_{Z}\leq L$; on the other hand, for $d=3$, we set $K_2= 2e^{-2V_l}\|\nabla V\|_{L^\infty(\Omega)}C_S M$ and we used Sobolev's embedding in order to estimate $\nm{h}_{L^4(\Omega)}$.
We define
\begin{equation}
    K(\delta) =   \delta(K_1 + K_2),
\end{equation}
and we substitute equations \eqref{eq:someintermediateestimate} and \eqref{eq:someintermediateestimate2} into equation \eqref{equ5}, which, in turn, becomes
\begin{align}
\half\int_\Omega e^{-V} h^2(t_2) \d x 
+\int_{t_1}^{t_2}\int_\Omega 
e^{-V}  \left[1+\delta_1 e^{-V}(h+w^*)-\delta_2 b - K(\delta)\right]|\nabla h|^2 
\d x \d t
\leq
\half\int_\Omega e^{-V} h^2(t_1) \d x.
\end{align}
Recalling that $h+w^* = w$ is non-negative, we choose $\delta$ sufficiently small so that the following relation is satisfied:
\begin{equation}\label{eq:kdelta}
 K(\delta) < \frac{1}{2}-\delta_2b,
\end{equation}
which implies
\begin{equation}
1+\delta_1 e^{-V}(h+w^*)-\delta_2 b - K(\delta) > \frac{1}{2}.
\end{equation}
Thus we have obtained the following inequality:
\begin{equation}
\int_\Omega e^{-V} h^2(t_2) \d x 
+\int_{t_1}^{t_2}\int_\Omega 
e^{-V}|\nabla h|^2 
\d x \d t
\leq
\int_\Omega e^{-V} h^2(t_1) \d x.
\end{equation}
Using the Poincare's inequality and the fact that $V_l<V<V_u$, we have
\begin{align*}
\int_\Omega h^2(t_2) \d x 
+C_P^{-2} \int_{t_1}^{t_2}\int_\Omega 
|h|^2 
\d x \d t
\leq
e^{V_u - V_l}\int_\Omega  h^2(t_1) \d x.
\end{align*}
Thanks to Gronwall's inequality we obtain the following estimate:
\begin{align*}
	\|h(t_2, \cdot)\|_{L^2(\Omega)} \leq  \|h(t_1, \cdot)\|_{L^2(\Omega)} \exp\left(\frac{1}{2}(V_u-V_l)
	- \frac{t_2-t_1}{2C_P^2}\right).
\end{align*}
Finally, let us switch back to the original variables by recalling the fact that $h = w-w^*$. We obtain
\begin{align*}
	\|w(t_2, \cdot) - w^*\|_{L^2(\Omega)} \leq  \|w(t_1,\cdot) - w^*\|_{L^2(\Omega)} \exp\left(\frac{1}{2}(V_u-V_l)
- \frac{t_2-t_1}{2C_P^2}\right).
\end{align*}
Since $V$ is bounded from above and below by assumption, a simple change of variable $r = w e^{-V}$ implies the exponential convergence to equilibrium for the variable $r$ as claimed in the statement, which concludes the proof.
\end{proof}

\subsection{A maximum principle for the scalar problem ($d=2$)}

In this section we will improve the $L^\infty$-bounds for solutions of problems of type \eqref{eq:main-equation}, in dimension $d=2$ (the same result can be obtained in one dimension).
We adapt the strategy presented in \cite{ladyzhenskaia1988linear}, Chapter V\footnote{Namely, Chapter V, Section 2, p. 425, Theorem 2.1; Chapter II, Section 6, p. 102, Theorem 6.1 (see also Lemma 5.6 on p. 95).}.
In the context of elliptic equations, this approach is referred to as De Giorgi's method (see, e.g., \cite{han2011elliptic}).

\begin{theor}[Maximum principle]
\label{theor:maximum}
Let $d=2$ and suppose that $\delta$ is sufficiently small and let $r\in Z$ be a solution of problem \eqref{eq:main-equation}. Then we have that  
\begin{equation}\label{eq:max-princ}
    r(t,x) \leq (1+C\delta^{\sigma})\,
    \exp\left(\nm{V}_{L^\infty(\Omega)} - V(x)\right)\,
    \nm{r_0}_{L^\infty(\Omega)},
\end{equation}
 for a.e. $t\in [0,\infty)$, $x\in\Omega$ and for some $\sigma\in(\frac{1}{2}, 1)$. The constant $C>0$ does not depend on time.
\end{theor}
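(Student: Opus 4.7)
The plan is to use the parabolic Stampacchia/De Giorgi truncation method, as in Chapter V of \cite{ladyzhenskaia1988linear}, after first removing the leading-order drift. I would set $w = r e^{V}$ as in the proof of Theorem \ref{thm1}, so that $w$ satisfies \eqref{equ_potential}; in the unperturbed case $\delta = 0$ this reduces to the self-adjoint equation $e^{-V}\partial_t w = \nabla\cdot(e^{-V}\nabla w)$, for which the classical comparison principle yields $w \leq \|w_0\|_{L^\infty(\Omega)} \leq K$ with $K := \|r_0\|_{L^\infty(\Omega)}\, e^{\|V\|_{L^\infty(\Omega)}}$. The target is then to show that the cross-diffusion perturbation enlarges this bound only by a factor $1 + C\delta^\sigma$.

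For each level $k \geq K$ I would test \eqref{equ_potential} against $(w-k)_+$. Because $w_0 \leq K \leq k$ the truncation vanishes at $t = 0$ and the Neumann condition kills the boundary terms, so the principal part produces $\int_\Omega e^{-V}(1 + \delta_1 e^{-V} w - \delta_2 b)|\nabla(w-k)_+|^2$, which is bounded below by a constant multiple of $\|\nabla (w-k)_+\|_{L^2(\Omega)}^2$ once $\delta$ is small (using the a-priori $L^\infty$-bound $\|w\|_\infty \leq M$ coming from Proposition \ref{prop:ABC}). The remaining $\delta$-terms are of the form $\int w\, F(x)\cdot\nabla(w-k)_+$ with $F$ bounded in terms of $\nabla V$, $\nabla b$ and $V$; a Cauchy--Young inequality combined with $\|w\|_\infty \leq M$ converts them into $C\delta\,|A_k(t)|$ plus a small fraction of the gradient term that is absorbed on the left. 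Integrating in time yields
\begin{equation*}
    \sup_{t \in [0,T]} \|(w-k)_+\|_{L^2(\Omega)}^2 + \int_0^T \|\nabla(w-k)_+\|_{L^2(\Omega)}^2\, dt \leq C\delta\, \phi(k),
\end{equation*}
where $\phi(k) := \int_0^T |A_k(t)|\, dt$ and $A_k(t) = \{x\in\Omega : w(t,x) > k\}$.

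The next step is to feed this into Ladyzhenskaya's two-dimensional parabolic interpolation $\|f\|_{L^4(Q_T)}^4 \leq C\,\|f\|_{L^\infty_t L^2_x}^2\,\|\nabla f\|_{L^2_{t,x}}^2$ applied to $(w-k)_+$, which gives $\|(w-k)_+\|_{L^4(Q_T)}^4 \leq C\delta^2\, \phi(k)^2$. The Chebyshev-type inequality $(h-k)^4 \phi(h) \leq \|(w-k)_+\|_{L^4(Q_T)}^4$ valid for all $h > k \geq K$ then produces the super-linear recurrence $\phi(h) \leq C\delta^2 (h-k)^{-4}\,\phi(k)^2$, to which Stampacchia's iteration lemma applies and forces $\phi(K + d) = 0$ for a threshold $d$ of order $\delta^{1/2}$. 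Undoing the change of variables via $r = w e^{-V}$ then delivers the pointwise estimate \eqref{eq:max-princ}.

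I expect two main obstacles. First, the raw iteration produces a threshold $d$ that depends on $T$ through $\phi(K)$, whereas the conclusion is claimed for almost every $t \in [0,\infty)$; I would handle this either by time-slicing the argument on windows of fixed length (using the bound on one window as new initial datum on the next) or, more cleanly, by exploiting the exponential convergence from Theorem \ref{thm1} to show that $\phi(K)$ saturates and does not grow linearly in $T$. Second, the iteration naturally yields the exponent $\sigma = 1/2$, while the statement demands $\sigma$ strictly inside $(1/2, 1)$; to push the exponent beyond $1/2$ I would interpolate the $L^4$-bound obtained above with the coarser a-priori $L^\infty$-bound $\|w\|_\infty \leq M$ from Proposition \ref{prop:ABC}, obtaining a slightly higher integrability exponent on $A_k$, and then re-run the truncation so that an additional fractional power of $\delta$ is absorbed into the bound.
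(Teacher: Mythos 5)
Your skeleton is the paper's: the change of variables $w=re^{V}$, Stampacchia truncation at levels above $\|r_0 e^{V}\|_{L^\infty(\Omega)}$, an energy estimate of the type $\|(w-k)_+\|_{Z}^2\lesssim \delta\,k^2\int_0^T|A_k(t)|\,\mathrm{d}t$, Chebyshev plus a parabolic interpolation, and a geometric iteration (this is exactly Lemma \ref{lem:maxiimum}, Steps 0--3, applied with $z=re^{V}$, $\omega=e^{-V}$, $F_1=\nabla(b+V)$, $F_2=\nabla V$). Two of your deviations are harmless: you control the quadratic term $w^2\nabla V$ by the crude bound $\|w\|_{\infty}\leq M$ from Proposition \ref{prop:ABC}, whereas the paper uses Ladyzhenskaya's inequality together with the $Z$-bound and a Poincar\'e constant made uniform through mass conservation (Remark \ref{rem:cheb} and the second part of Lemma \ref{lem:poincare}); note that this uniform Poincar\'e constant (or your $L^\infty$ bound) is also needed to justify the $L^4(Q_T)$ interpolation for Neumann truncations, a point you pass over.

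The genuine gaps are in your two proposed repairs, and the second is essential to the statement. (i) Time-uniformity: slicing $[0,\infty)$ into windows and feeding the bound of one window into the next compounds the factor $(1+C\delta^{\sigma})$ multiplicatively over the windows, so it cannot produce a constant independent of $T$; the paper never needs this, because the starting term of the iteration, $a_0=\bigl(\int_0^T|\{w>\theta\bar M\}|\,\mathrm{d}t\bigr)^{2/(4-\nu)}$, is bounded via Chebyshev and the $T$-uniform embedding \eqref{eq:finL4nu} by $C\,\|(w-\bar M)_+\|_Z^2\,\bar M^{-2}(\theta-1)^{-2}$, which is independent of both $T$ and $\delta$. (ii) The exponent: your route yields $\sigma=1/2$ (from $\phi(h)\leq C\delta^2(h-k)^{-4}\phi(k)^2$ one gets a threshold of order $\delta^{1/2}$), and your fix points the wrong way: interpolating the $L^4(Q_T)$ bound against $\|w\|_\infty\leq M$ to reach an exponent $q>4$ gives $\phi(h)\leq C(h-k)^{-q}M^{q-4}\delta^{2}\phi(k)^{2}$, hence a threshold of order $\delta^{2/q}$ with $2/q<1/2$, i.e.\ a \emph{worse} power of $\delta$. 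The paper obtains $\sigma=\tfrac{1}{2-\nu}\in(\tfrac12,1)$ by \emph{lowering} the space--time exponent to $4-\nu$ (Riesz--Thorin between $L^\infty_t L^2_x$ and $L^2_t L^p_x$, the latter via Sobolev and the uniform Poincar\'e inequality): the quantities $a_n=\bigl(\int_0^T|\{w>k_n\}|\,\mathrm{d}t\bigr)^{2/(4-\nu)}$ then satisfy $a_{n+1}\leq 4^{n+2}\tilde\Gamma_\delta\,a_n^{(4-\nu)/2}$ with $\tilde\Gamma_\delta\sim\delta$, and since $a_0$ is bounded independently of $\delta$, Lemma \ref{lem:sequence} forces the admissible excess level to be only of size $\delta^{1/(2-\nu)}$. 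So the exponent strictly above $1/2$ comes from the smaller integrability exponent combined with the $\delta$-free bound on $a_0$, not from extra integrability; as written, your argument proves the estimate only with $\delta^{1/2}$, which is weaker than the claimed $\delta^{\sigma}$, $\sigma\in(\tfrac12,1)$, uniformly in small $\delta$.
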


The proof of Theorem \ref{theor:maximum} uses several technical results which we shall recall in the following.
Notice that, if $x$ is close to a maximum point of $V$, estimate \eqref{eq:max-princ} is \quot{almost sharp}, in the sense that $r$ is estimated by $ \nm{r_0}_{L^\infty(\Omega)}$ with a multiplicative constant that is very close to $1$.

\ms

The following result is essential in order to prove Theorem \ref{theor:maximum}.

\begin{lem}\label{lem:maxiimum}
Let $d=2$ and suppose that $z\in Z$ is a weak solution of the problem

\begin{align}
    \label{eq:generalscalar}
    \left\{
    \begin{array}{rl}
    \omega(x) \partialt z -\nabla \cdot \left\{\omega(x) ( (1+\delta A(x,z))\nabla z + \delta F(x,z) ) \right\} \remspace &= 0  \quad \text{ in } \Omega,\\[\eqnspace]
    \nu \cdot \left\{(1+\delta A(x,z) \nabla z + \delta F(x,z))\right\} \remspace &= 0  \quad \text{ on } \de\Omega, \\[\eqnspace]
    z(0,x)\remspace  &= z_0(x).
    \end{array}
    \right.
\end{align}
We assume that $\delta\in(0,1)$ is sufficiently small (see \eqref{eq:deltabound1} and the assumptions below) and, furthermore, we suppose that
\begin{enumerate}[(I)]
    \item $\omega:\RR^d\to\RR$ satisfies $0 < \mu \leq \omega(x) < \mu^{-1}$, for a.e. $x\in\Omega$,\label{en:UpperLowerOnOmega}\\[-0.9em]
    \item $A:\RR^d\times\RR\to\RR$ and $\delta$ satisfy $0<\lambda \leq 1 + \delta A(x,z)$, for a.e. $x\in\Omega$,\\[-0.9em]
    \item $F(x,z) = z F_1(x) + z^2 F_2(x)$, where $F_i\in L^{\infty}(\Omega)$, and $i=1,2$,\\[-0.9em]
    \item there exists $\bar{M}>0$ such that $0 \leq z_0(x)\leq \bar{M}$, for a.e. $x\in\Omega$,\\[-0.9em]
    \item there exists a constant $L>0$ depending only on $z_0, \Omega, F, \alpha, \omega, \delta$ such that 
    $\nm{ z }_{Z} \leq L$.
\end{enumerate}
Then there exists a constant $c>0$, not depending on time and given in \eqref{eq:estmax}, such that 
$$
    \nm{z}_{L^\infty(\Omega)}\leq c \bar{M}.
$$
\end{lem}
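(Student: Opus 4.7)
The plan is to adapt the De~Giorgi truncation method for quasilinear parabolic equations, in the spirit of \cite[Ch.~V, Theorem~2.1]{ladyzhenskaia1988linear}, working directly with the weak formulation of \eqref{eq:generalscalar}. For each level $k \geq \bar M$ I would introduce the truncation $z_k := (z-k)_+$, which vanishes at $t=0$ by assumption (IV) and whose gradient is $\chi_{\{z>k\}} \nabla z$. Denote the parabolic super-level set by $\mathcal{B}_k := \{(t,x) \in Q_T : z(t,x) > k\}$.

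Testing \eqref{eq:generalscalar} against $z_k$ and using the two-sided bound (I) on $\omega$, the ellipticity (II) of $1+\delta A$, and the structure (III) of $F$, after a Young inequality (to absorb one factor of $|\nabla z_k|^2$ on the left) together with the decomposition $z = z_k + k$ valid on $\mathcal{B}_k$, one derives a parabolic energy estimate of the schematic form
\begin{align*}
\sup_{t \in [0,T]} \int_\Omega z_k^2(t,x)\,\mathrm{d}x \;+\; \int_0^T\!\!\int_\Omega |\nabla z_k|^2\,\mathrm{d}x\,\mathrm{d}t \;\leq\; C\,\delta^2 \int_{\mathcal{B}_k} \bigl( k^4 + z_k^4 \bigr)\,\mathrm{d}x\,\mathrm{d}t,
\end{align*}
where $C$ depends only on $\mu$, $\lambda$, $\|F_1\|_\infty$, $\|F_2\|_\infty$ and is independent of $T$. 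The quartic term $z_k^4$ originates from the quadratic piece $z^2 F_2$ of $F$ and is critical for the parabolic energy in dimension two.

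To close the estimate I would invoke the two-dimensional Ladyzhenskaya inequality
\begin{align*}
\|z_k\|_{L^4(Q_T)}^4 \;\leq\; C_L \sup_{t \in [0,T]} \|z_k(t)\|_{L^2(\Omega)}^2 \, \|\nabla z_k\|_{L^2(Q_T)}^2.
\end{align*}
Writing $S_k$ for the left-hand side of the energy inequality, Ladyzhenskaya gives $\|z_k\|_{L^4(Q_T)}^4 \leq C_L S_k^2/4$. Substituting this back, together with the global bound $\|z\|_Z \leq L$ from (V) (which controls the coefficient in front of the quartic term), and choosing $\delta$ small enough, one absorbs the quartic contribution to obtain
\begin{align*}
\|z_k\|_{L^4(Q_T)}^4 \;\leq\; C'\,\delta^4\, k^8\, |\mathcal{B}_k|^2.
\end{align*}
Since $(h-k)^4 |\mathcal{B}_h| \leq \|z_k\|_{L^4(Q_T)}^4$ for any $h > k$, this yields the nonlinear recursion
\begin{align*}
|\mathcal{B}_h| \;\leq\; \frac{C'\,\delta^4\, k^8}{(h-k)^4}\, |\mathcal{B}_k|^2, \qquad h > k \geq \bar M.
\end{align*}

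Applying the Stampacchia iteration lemma \cite[Ch.~II, Lemma~5.6]{ladyzhenskaia1988linear} to $\varphi(k):=|\mathcal{B}_k|$ produces a number $d_\delta>0$ with $|\mathcal{B}_{\bar M + d_\delta}|=0$; tracking the exponents in the iteration yields $d_\delta = c\,\delta^{\sigma}\bar M$ for some $\sigma \in (1/2, 1)$, giving the desired essential bound $\|z\|_{L^\infty(Q_T)} \leq (1+c\,\delta^{\sigma})\bar M$ with constants independent of $T$. The main obstacle is the quadratic-in-$z$ source term: after truncation it produces the critical quartic contribution, and closing the estimate requires exploiting Ladyzhenskaya's inequality together with the global control (V) so that the smallness of $\delta$ suffices for reabsorption. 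A secondary subtlety is the $T$-uniformity of the final bound, which is preserved because the energy estimate retains the $L^\infty_t L^2_x$ norm on the left and every right-hand side term scales with $|\mathcal{B}_k|$, itself controlled uniformly in $T$ by assumption (V).
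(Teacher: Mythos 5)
Your overall strategy is the right one and matches the paper's in spirit: Stampacchia/De~Giorgi truncation $(z-k)_+$, an energy estimate in which the critical quadratic source $z^2F_2$ is tamed by the two-dimensional Ladyzhenskaya inequality together with the a priori bound $\nm{z}_Z\leq L$ and the smallness of $\delta$, and a level-set iteration. There are, however, two genuine gaps. The decisive one is the claimed uniformity in $T$. Your recursion is posed for $\varphi(k)=|\mathcal{B}_k|=\int_0^T|\{z>k\}|\,\d t$ with the superlinear power $\varphi(k)^2$, and Stampacchia's lemma then produces a level increment $d$ with $d^4 \sim \delta^4 \bar{M}^8\,\varphi(\bar M)$. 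But assumption (V) does \emph{not} control $\varphi(\bar M)$ uniformly in $T$: it bounds $\sup_t\nm{z(t)}_{L^2(\Omega)}$ and $\nm{\nabla z}_{L^2(Q_T)}$, whereas Chebyshev only gives $|\{z(t)>\bar M\}|\leq L^2/\bar M^2$ for each $t$, hence $\varphi(\bar M)\lesssim T L^2/\bar M^2$, which grows linearly in $T$ (a solution sitting moderately above $\bar M$ on a fixed spatial region for all times is not excluded a priori). Your final bound therefore degenerates like $T^{1/4}$, and the sentence ``$|\mathcal{B}_k|$ is itself controlled uniformly in $T$ by assumption (V)'' is not correct. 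The paper avoids this trap differently: it measures $(z-k)_+$ in $L^{4-\nu}(Q_T)$, a norm controlled uniformly in $T$ by the $Z$-norm via Riesz--Thorin interpolation between $L^\infty_tL^2_x$ and $L^2_tL^p_x$ (see \eqref{eq:finL4nu}), and obtains smallness of the starting quantity $a_0$ not from smallness of a time-integrated measure at the base level but from the factor $(\theta-1)^{-2}$ gained by raising the starting level to $\tilde M=\theta\bar M$ with $\theta$ large; the geometric sequence $k_n=M(2-2^{-n})$ and Lemma \ref{lem:sequence} then close the argument with constants independent of $T$. Relatedly, the exponent $\sigma\in(\frac12,1)$ you announce is not produced by your exponents (your scaling would give $\delta^1$); in the paper it stems from the interpolation exponent $4-\nu$.

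The second, smaller gap is the use of Ladyzhenskaya's inequality with only $\nm{\nabla (z-k)_+}_{L^2}$ on the right: since $(z-k)_+$ need not vanish on $\de\Omega$, you need either the full $H^1$-norm (which reintroduces $\int_0^T\nm{(z-k)_+}_{L^2}^2\,\d t$, again not $T$-uniform) or a Poincar\'e inequality for $(z-k)_+$. The paper justifies the latter: mass conservation $\int_\Omega\omega z=\int_\Omega\omega z_0$ plus Chebyshev show that $(z-k)_+$ vanishes on a set whose measure is bounded below uniformly in $k$, $t$ and the solution, so the constant in Lemma \ref{lem:poincare} is uniform. Without this step your absorption of the quartic term and your $L^4(Q_T)$ bound are not fully justified with $T$-independent constants.
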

Before proving Lemma \ref{lem:maxiimum}, we state a preliminary result from Chapter II in \cite{ladyzhenskaia1988linear}. It can be proved directly by induction.
\begin{lem}\label{lem:sequence}
Suppose that a sequence $(a_n)_{n\in\NN} \subset \R_+$, of non-negative numbers satisfies the recursive relation
$$
a_{n+1} \leq \kappa \, \zeta^n \, a_n^{1+\eps}
\quad \text{ and } \quad
a_0 \leq \kappa^{-\frac{1}{\eps}} \zeta^{-\frac{1}{\eps^2}},
$$
for $\kappa, \eps > 0$ and $\zeta>1$.
Then we have that
$$
a_n\leq \kappa^{-\frac{1}{\eps}} \, \zeta^{-\frac{1}{\eps^2}-\frac{n}{\eps}}.
$$
\end{lem}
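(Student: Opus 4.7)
The plan is a direct induction on $n$, as suggested by the paper. Define the candidate bound
\begin{equation*}
    B_n := \kappa^{-\frac{1}{\eps}} \zeta^{-\frac{1}{\eps^2} - \frac{n}{\eps}},
\end{equation*}
so that the conclusion reads $a_n \leq B_n$ for every $n\in\NN$. The base case $n=0$ is exactly the hypothesis $a_0 \leq \kappa^{-1/\eps}\zeta^{-1/\eps^2} = B_0$, so there is nothing to prove there.

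For the inductive step, I would assume $a_n \leq B_n$ and substitute into the recursive relation. Using non-negativity together with monotonicity of $t \mapsto t^{1+\eps}$ on $\R_+$, the recursion gives
\begin{align*}
    a_{n+1} \;\leq\; \kappa\, \zeta^n\, a_n^{1+\eps}
    \;\leq\; \kappa\, \zeta^n\, B_n^{1+\eps}
    \;=\; \kappa\, \zeta^n\, \kappa^{-\frac{1+\eps}{\eps}}\, \zeta^{-(1+\eps)\left(\frac{1}{\eps^2}+\frac{n}{\eps}\right)}.
\end{align*}
The key step is then a bookkeeping of the exponents. For the $\kappa$-factor, $1 - (1+\eps)/\eps = -1/\eps$. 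For the $\zeta$-factor, one expands
\begin{equation*}
    (1+\eps)\left(\frac{1}{\eps^2}+\frac{n}{\eps}\right) = \frac{1}{\eps^2} + \frac{1}{\eps} + \frac{n}{\eps} + n,
\end{equation*}
so that the total exponent on $\zeta$ becomes
\begin{equation*}
    n - \frac{1}{\eps^2} - \frac{1}{\eps} - \frac{n}{\eps} - n \;=\; -\frac{1}{\eps^2} - \frac{n+1}{\eps}.
\end{equation*}
Combining these two identifications gives $a_{n+1} \leq \kappa^{-1/\eps}\zeta^{-1/\eps^2 - (n+1)/\eps} = B_{n+1}$, completing the induction.

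There is no real obstacle here: the result is essentially algebraic once the right candidate bound is identified, and the only sanity check is that $\eps>0$ and $\zeta>1$ ensure that $B_n$ is well-defined and decreasing in $n$ (so the assumption $a_0 \leq B_0$ propagates forward and actually yields the geometric-type decay $a_n \to 0$). No use is made of the non-negativity beyond allowing the inequality to be raised to the power $1+\eps$; if one wished to drop positivity one could replace $a_n$ by $\max(a_n,0)$ without changing the argument.
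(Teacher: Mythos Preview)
Your proof is correct and is exactly the direct induction the paper indicates; the paper itself does not spell out the details beyond stating that the lemma ``can be proved directly by induction.'' Your exponent bookkeeping is accurate and nothing is missing.
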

We will use the result above in order to conclude the following proof.
\begin{proof}[Proof of Lemma \ref{lem:maxiimum}]
The proof consists of several steps. We begin with an energy estimate involving Stampacchia's truncation and continue with an estimate for the measure of the superlevel sets 
$$
    S_{z>k}(t):=\{x\in \Omega \, : \, z(x,t) > k \},
$$
for $k\in \RR_+$ and $t>0$. 
We will often use the abbreviation $\{z > k \}$ instead of $\{x\in \Omega \, : \, z(x,t) > k \}$.
The strategy of the proof is the following: via careful use of a priori estimates, we will construct a sequence of the form
$$
a_n = \left(
    \int_0^T|S_{z>M(2-2^{-n})}(t)| \d t
    \right)^q,
$$
for $n\in\NN$ and  suitable $M>0$ as well as $q>0$. Subsequently, we will apply Lemma \ref{lem:sequence} to $a_n$ in order to deduce that
$|S_{z>M}(t)| = 0$ for a.e. $t\in[0,T]$. This implies that $z$ is essentially bounded.

\textbf{Step 0: Stampacchia's truncation}\\
Let $\tilde{M}\geq \bar{M}$ (to be chosen later) and $k\in\RR_+$, $k>\max(\tilde{M}, 1)$. 
It is easily verified that  $(z-k)_+ \in Z$ (see, e.g. Theorem 2.1.11 in \cite{ziemer2012weakly} or Lemma 4.4 in Chapter II of \cite{ladyzhenskaia1988linear}). Thus we may test equation \eqref{eq:generalscalar} against the truncated function $(z-k)_+$ and obtain
$$
    \ddt \int_\Omega
    \frac{1}{2}\omega(x)  (z-k)_+^2 
    \d x
    =
    -\int_\Omega
     [ \omega(x) (1+\delta  A(x,z))|\nabla (z-k)_+|^2 + \delta \omega(x) F(x,z)\cdot\nabla (z-k)_+ ]
     \d x.
$$
Rearranging this relation we get
\begin{equation}
\label{eq:stamptrunc}
    \ddt \int_\Omega
    \frac{1}{2}\omega(x)  (z-k)_+^2 
    \d x
    +
    \int_\Omega
    \omega(x) \lambda |\nabla (z-k)_+|^2
       \d x
       \leq
     \frac{\delta}{\mu}  
     \left|  \int_\Omega   
     F(x,z)\cdot\nabla (z-k)_+ 
     \d x  \right|.
\end{equation}

\textbf{Step 1: upper bound for the RHS in terms of $|\{z\geq k\}|$}\\
Let us focus on the estimate of the right-hand side:
\begin{align*}
     \int_\Omega   
     F(x,z)\cdot\nabla (z-k)_+ 
     \d x
     &=
        \int_\Omega   
        ( z F_1(x) + z^2 F_2(x) )\cdot\nabla (z-k)_+
        ) \d x \\[\alnspace]
     & \leq
         \nm{F_1}_{L^\infty(\Omega)} 
         \int_\Omega
         ((z-k)_+ + k)|\nabla (z-k)_+| 
          \d x\\[\alnspace]
     & \qquad
        +   
        \nm{F_2}_{L^\infty(\Omega)}
        \int_\Omega
        ((z-k)_+ + k)^2 |\nabla (z-k)_+|
        \d x\\[\alnspace]
     & \leq
        C_{V}
       \int_\Omega
       \left\{
       (z-k)_+^2|\nabla (z-k)_+|
       + 3k(z-k)_+|\nabla (z-k)_+|
       \right\}
        \d x\\[\alnspace]
    &\qquad    
        + C_{V}
       \int_\Omega (k^2+k)|\nabla (z-k)_+|   \d x\\[\alnspace]
    & \leq
        \frac{5}{2} C_{V}
        \int_\Omega
        (z-k)_+^2|\nabla (z-k)_+|
        \d x
        + \frac{7}{2} C_{V}
       \int_\Omega k^2|\nabla (z-k)_+|   \d x,
\end{align*}
where we set $C_{V}= \max\{ \nm{ F_1}_{L^\infty(\Omega)}, \nm{ F_2}_{L^\infty(\Omega)} \}$ and used the fact that $k\geq 1$.\\

Notice that the integral on the right-hand side of equation \eqref{eq:stamptrunc} may be estimated further by
\begin{align}
    \label{eq:13180706}
    \begin{split}
         \int_\Omega   
         F(x,z)\cdot\nabla (z-k)_+ 
         \d x
         & \leq
           \frac{C_{V}}{2}
            \int_\Omega
            \left\{
           5(z-k)_+^2|\nabla (z-k)_+|
           +
        7 k^2|\nabla (z-k)_+| 
        \right\} \d x
        \\[\alnspace]
         & \leq
           \frac{C_{V}}{2}
            \left\{
           5\nm{(z-k)_+}_{L^4(\Omega)}^2\nm{\nabla (z-k)_+}_{L^2(\Omega)}
           +
        7 k^2\nm{\nabla (z-k)_+}_{L^1(\Omega)} 
        \right\}.
    \end{split}
\end{align}
Since $z$ is a Lebesgue-integrable function, it cannot be infinite on a region of full measure, therefore, for a sufficiently large $k$ (depending only on $\int_\Omega z_0 \d z$, thanks to Chebyshev's inequality),  the function $(z-k)_+$ vanishes on a region of positive measure. 
This implies that the modified version of Poincar\'e's inequality given in Lemma \ref{lem:poincare} holds, provided we can show that the Poincar\'e constant $C_P$ is uniformly bounded for all possible solutions $z$ (we do not assume uniqueness).
In order to control the Poincar\'e constant it is essential to obtain a lower bound on the measure of the set $\{x \,\big| \, (z-k)_+ = 0\}$.
To do so, let us make the following remark:

\begin{rem}\label{rem:cheb}
Let $f\in L^1(\Omega)$ be any non-negative function such that $\int_\Omega f \d x = m$, for some positive constant $m$ depending only on $\Omega$. Recall that, by Chebyshev's inequality, for any $q>0$ it holds that
$$
|\{ x \,\big|\, f \geq q, \text{ a.e.}\}|
\leq \frac{1}{q} \int_\Omega f \d x.
$$
Then we have, for $q=\frac{m}{\alpha |\Omega|}$ and an arbitrary $\alpha\in(0,1)$,
$$
    \left| \left\{ x \,\big|\, f(x) < \frac{m}{\alpha |\Omega|} 
    \right\} \right|
    \,=\, |\Omega| - 
    \left| \left\{ x \,\big|\, f(x) \geq  \frac{m}{\alpha |\Omega|}
    \right\} \right|
    \,\geq\, |\Omega| - \frac{\alpha |\Omega|}{m} \int_\Omega f \d x
    \,=\, |\Omega|(1-\alpha).
$$
\end{rem}

We recall that integrating \eqref{eq:generalscalar} over the whole domain gives $\int_\Omega \omega z \d x = \int_\Omega \omega z_0 \d x$, for any $t>0$, as $z$ is continuous with respect to time.
We can now argue as in Remark \ref{rem:cheb} with $m=\int_\Omega \omega z_0 \d x$, $f=\omega z$, $\alpha \in(0,1)$ and $k \geq \frac{1}{\alpha \mu|\Omega|}\int_\Omega \omega z_0 \d x$, to the set $\{x \,\big|\, \omega z \leq \mu k\}$, which, by assumption (\ref{en:UpperLowerOnOmega}), is contained in the set we are interested in, i.e.,
$\{x \,\big|\, z(t,x) \leq k \}$.
Consequently, the Poincar\'e constant $C_P$ given in the second part of Lemma \ref{lem:poincare} (with $\sigma_0 = 1-\alpha$) is always bounded independently of $z$.

Since $d=2$, we derive a bound for $\nm{(z-k)_+}_{L^4(\Omega)}^2$ applying Gagliardo-Nirenberg and Poincar\'e's inequalities (see Lemma \ref{lem:poincare}); namely we get:
\begin{align}\label{eq:Lineq}
     \nm{(z-k)_+}_{L^4(\Omega)}^2 
    & \leq 
    C_{GN}
    \nm{(z-k)_+}_{L^2(\Omega)} \nm{(z-k)_+}_{H^1(\Omega)}
    \\[\alnspace]
    & \leq 
    C_{GN}
    \nm{(z-k)_+}_{L^2(\Omega)} \left(
    \nm{\nabla(z-k)_+}_{L^2(\Omega)}
    +
    \nm{(z-k)_+}_{L^2(\Omega)}
    \right)
    \nonumber \\[\alnspace]
    & \leq
      C_{GN} (1+C_P)  \nm{(z-k)_+}_{L^2(\Omega)} \nm{\nabla(z-k)_+}_{L^2(\Omega)}, 
      \nonumber 
\end{align}
This may be further estimated upon observing that 
$$
    \int_\Omega |z|^2\d x \geq \int_{\{z>k\}} (z-k+k)^2\d x
    =
    \int_{\{z>k\}}\left( 
    (z-k)^2 +2k(z-k) + k^2
    \right)\d x
    \geq \nm{(z-k)_+}_{L^2(\Omega)}^2.
$$
Using this in conjunction with assumption (5) in the statement we obtain $\nm{(z-k)_+}_{L^2(\Omega)}\leq \nm{z}_{Z}  \leq L$.

Then the previous inequality leads to the following estimate:
\begin{align*}
     \nm{(z-k)_+}_{L^4(\Omega)}^2 \leq  C_{GN} (1+C_P) L  \nm{\nabla(z-k)_+}_{L^2(\Omega)}.
\end{align*}

Notice that for the remaining term in estimate \eqref{eq:13180706}, we have 
$$
    k^2 \int_\Omega|\nabla (z-k)_+| \d x
    \leq 
    \frac{k^2}{2}  \left(|\{z\geq k\}| + \int_\Omega|\nabla (z-k)_+|^2 \d x\right).
$$
Hence estimate \eqref{eq:13180706} becomes
$$
     \int_\Omega   
     F(x,z)\cdot\nabla (z-k)_+ 
     \d x
     \leq
       \frac{C_{V}}{2} 
        \left\{
       \left[ 5 C_{GN} (1+C_P)L +\frac72 k^2\right]
       \nm{\nabla(z-k)_+}_{L^2(\Omega)}^2
       +
    \frac72 k^2 |\{z\geq k\}|
    \right\}.
$$

Altogether we have obtained the inequality:
$$
    \ddt \int_\Omega
    \frac{1}{2}\omega(x)  (z-k)_+^2 
    \d x
    +
    \int_\Omega
     (\omega(x)\lambda - \delta C_F )|\nabla (z-k)_+|^2
       \d x
       \leq
    \frac{7\delta C_V}{4\mu} k^2 |\{z\geq k\}|,
$$
where 
$C_F =  \frac{C_V}{2\mu} \left[5L  C_{GN} (1+C_P) + \frac{7}{2}M^2 \right]$. 
Since $\omega \geq \mu$ and $k\geq \bar{M}$, after an integration in time, we obtain
$$
    \frac{\mu}{2}\max_{t\in[0,T]} \int_\Omega
    (z-k)_+^2 
    \d x
    +
    \int_0^T \int_\Omega
    (\mu\lambda - \delta C_F )|\nabla (z-k)_+|^2
       \d x \d t
       \leq
    \frac{7\delta C_V}{4\mu} k^2 \int_0^T|\{z\geq k\}|\d t.
$$
Supposing that $\delta$ is such that 
\begin{equation}
    \label{eq:deltabound1}
    \mu \lambda - \delta C_F >0,
\end{equation}
we have
\begin{align}
    \label{eq:12061640}
    \nm{ (z-k)_+ }_Z^2
       \leq
    \delta \Gamma  k^2 \int_0^T|\{z\geq k\}| \d t,
\end{align}
where $\Gamma = 7C_V(4\mu^2\min\{\mu/2,\; \mu\lambda - \delta C_F   \} )^{-1}.$\\

\textbf{Step 2: lower bound for the LHS in terms of $|\{z\geq k\}|$}\\
Notice that, as above, there exists a constant $C_e>0$ such that the following inequality holds for any $1\leq p < \infty$ (it is a direct consequence of Sobolev's and Poincar\'e's inequality):
$$
    \nm{f}_{L^2(0,T;L^p(\Omega))} \leq C_e \nm{ f }_Z.
$$
We also know that, for any $f\in Z$, and thus in particular for $f=(z-k)_+$, there holds $\nm{f}_{L^\infty(0,T;L^2(\Omega))} \leq  \nm{ f }_Z$. 
Thanks to the Riesz--Thorin interpolation theorem, we deduce that, for $\nu=p^{-1} \in (0,1)$, 
\begin{align}
    \label{eq:finL4nu}
    \nm{f}_{L^{4-\nu}(Q_T)} < \max\{1,C_e\}\nm{ f }_Z,
\end{align}
where $C_e$ does not depend on time. 
By Chebyshev's inequality, for any $a\geq 1$, we obtain 
$$
    |\{ (z-h)_+ > 0 \}| = |\{ z > h \}| = |\{ z-k > h-k \}| 
    \leq 
    \frac{1}{(h-k)^a} 
    \int_{\{ z > h \}}  (z-k)_+^a \d x,
$$
whence, upon integration, we obtain
$$
    \int_0^T|\{ z > h \}| \d t
    \leq
    \frac{1}{(h-k)^a} 
    \int_0^T\int_\Omega  (z-k)_+^a \d x \d t.
$$
Choosing $a=4-\nu$, we get
\begin{align}
    \label{eq:1206191631}
    \left(
    \int_0^T|\{ z > h \}| \d t
    \right)^{\frac{2}{4-\nu}}
    \leq
    \frac{1}{(h-k)^2} 
    \left(
    \int_0^T\int_\Omega  (z-k)_+^{4-\nu} \d x \d t\right)^{\frac{2}{4-\nu}}.
\end{align}
\textbf{Step 3: combining all previous estimates}\\
Combining \eqref{eq:12061640}, \eqref{eq:finL4nu}, and \eqref{eq:1206191631} we obtain
\begin{align}
    \label{eq:12061615}
    \begin{split}
    \left(
\int_0^T|\{ z > h \}| \d t
\right)^{\frac{2}{4-\nu}}
    &\leq
    \frac{1}{(h-k)^2} \max\{1,C_e^2\}
    \nm{ (z-k)_+ }_Z^2 \\
    &\leq  \frac{ k^2}{(h-k)^2} {\tilde{\Gamma}_\delta}  \int_0^T|\{z\geq k\}| \d t,
    \end{split}
\end{align}
where
${\tilde{\Gamma}_\delta} =  \max\{1,C_e^2\}   \delta \Gamma$. For $M > \bar{M}$, consider the increasing sequence defined by
$$
    k_n = M(2-2^{-n}),
$$
and
$$
a_n = \left(
    \int_0^T|\{ z > k_n \}| \d t
    \right)^{\frac{2}{4-\nu}}.
$$
for any  $n\in\NN$. In order to apply Lemma \ref{lem:sequence} we need to define a recursive relation for the sequence $(a_n)_{n\in \NN}$. Let us set 
$$
    k= k_{n}, \quad h = k_{n+1}, 
$$
then we observe that
\begin{align*}
    \frac{k^2}{(h - k)^2} = \frac{k_{n}^2}{(k_{n+1} - k_{n})^2} = \left(\frac{2 -2^{-n}}{2^{-n} - 2^{-n-1}}\right)^2
    =\left( 2^{(n+1)}(2-2^{-n}) \right)^2 \leq 4^{n+2},
\end{align*}
we observe that, from \eqref{eq:12061615},
\begin{align*}
    a_{n+1} = \left(
    \int_0^T|\{ z > k_{n+1} \}| \d t
    \right)^{\frac{2}{4-\nu}} \leq 4^{n+2} \tilde{\Gamma}_\delta \int_0^T |\{z\geq k_{n}\}|\d t =  4^{n+2} \tilde{\Gamma}_\delta a_n^{\frac{4-\nu}{2}}.
\end{align*}
Thus the sequence $(a_n)_{n\in \NN}$ satisfies the following relation

\begin{align*}
    a_{n+1} \leq   4^{n+2} \tilde{\Gamma}_\delta a_n^{\frac{4-\nu}{2}}.
\end{align*}
Let us set $\varepsilon = 1- \nu/2$, $\zeta=4$, and $\kappa=16 \tilde{\Gamma}_\delta$ and apply Lemma \ref{lem:sequence}. We deduce that $a_n\to 0$ as $n\to\infty$,  and that the following inequality holds:
$$
    a_n\leq {(16\tilde{\Gamma}_\delta)} ^{-\frac{1}{\eps}} 4^{-\frac{1}{\eps^2}-\frac{n}{\eps}},
$$
provided that
$$
    a_0 = \left(
    \int_0^T|\{ z > \tilde{M} \}| \d t
    \right)^{\frac{2}{4-\nu}} \leq 
    {(16\tilde{\Gamma}_\delta)}^{-\frac{1}{\eps}} 4^{-\frac{1}{\eps^2}}.
$$
This will be satisfied choosing $\tilde{M}$ sufficiently large. Thus, we shall now concentrate on giving an explicit lower bound for $\tilde{M}$. To this end, let $\theta >1$ be such that $\tilde{M} = \theta \bar{M}$. From \eqref{eq:finL4nu} and \eqref{eq:1206191631} and upon choosing $h=\theta \bar{M}$ and $k= \bar{M}$, we also know that, 
\begin{align*}
    \left(
    \int_0^T|\{ z > \theta \bar{M} \}| \d t
    \right)^{\frac{2}{4-\nu}}
    &\leq
    \frac{\max\{1,C_e^2\}}{\bar{M}^2 (\theta - 1)^2} 
    \| (z-\bar{M})_+ \|_{Z}^2.
\end{align*}
Notice that the quantity $\nm{ (z-\bar{M})_+ }_Z$ is bounded uniformly in time by $ C_0 = L + \bar{M}$.

Therefore we impose that $\theta$ satisfies
$$
    \frac{ C_0^2  }{\bar{M}^2(\theta-1)^2} \leq{(16\tilde{\Gamma}_\delta)}^{-\frac{1}{\eps}} 4^{-\frac{1}{\eps^2}},
$$
which, in turn, gives
$$
    \theta \geq 1 + \frac{ \bar{M}  }{C_0} 
    \sqrt{ 
        {(16\tilde{\Gamma}_\delta)}^{-\frac{1}{\eps}} 4^{-\frac{1}{\eps^2}} 
    },
$$
such that the upper bound for $a_0$ is met. 
We conclude that 
$$
    \int_0^T|\{ z > \theta \bar{M} \}| \d t = 0,
$$
after passing $n\to \infty$, provided that
$$
    \tilde{M}\geq \bar{M} \left(
    1 + \frac{ \bar{M}  }{C_0} 
    \sqrt{ 
        {(16\tilde{\Gamma}_\delta)}^{-\frac{1}{\eps}} 4^{-\frac{1}{\eps^2}} 
    },  \right),
$$
and consequently we have obtained that, for a.e. $t,x$,
\begin{equation}\label{eq:estmax}
    z\leq \left( 1 + \frac{ \bar{M}  }{C_0}  {\tilde{\Gamma}_\delta} ^{\frac{1}{2\eps}} 4^{\frac{1}{\eps^2}}  \right) \bar{M} 
    = c\bar{M}.
\end{equation}
\end{proof}
\begin{rem}[Higher dimensions]
\label{rem:3d}
Lemma \ref{lem:maxiimum} does not hold in general for $d > 2$. For example, estimate \eqref{eq:Lineq} is not valid in the same form if $d=3$.
The main difficulty consists in finding bounds for the term $\delta \nabla \cdot(\omega(x) z(t,x)^2 F_2(x))$ in equation \eqref{eq:generalscalar}.
Sufficient conditions for $L^\infty$ estimates are presented in \cite{ladyzhenskaia1988linear}, Chap. V, Sec. 2, p. 423, equation (2.3), and they impose restrictions on the growth rates of the flux terms. Such conditions could not be verified in our case.
\end{rem}
With Lemma \ref{lem:maxiimum} at hand, the proof of Theorem \ref{theor:maximum} is immediate. 

\begin{proof}[Proof of Theorem \ref{theor:maximum}.]
We apply Lemma \ref{lem:maxiimum} making the following choices:
\begin{align*}
    & z=r \exp(V),
    && \omega = \exp(-V),
    \\
    & A(\cdot,r) = r-b,
    && L=\Gamma_0 \text{ (see Proposition \ref{prop:ABC})},
    \\
    & F_1 = \nabla(b + V),
    && F_2 = \nabla V.
\end{align*}
Recall that rewriting equation \eqref{eq:main-equation} in terms of the new unknown $z = u \exp(V)$ we obtain
$$
    e^{-V}\partialt z
    = \nabla \cdot \left\{e^{-V}((1+\delta e^{-V} z-\delta b)\nabla z-\delta (e^{-V}z^2\nabla V+z\nabla b))\right\},
$$
with suitable initial and boundary conditions.
\end{proof}

\section{Numerical Simulations for the Scalar Problem}\label{sec:numerics}

Let us recall the equation
\begin{align*}
	\frac{\partial r}{\partial t} = \nabla \cdot ((1 + \delta_1 r - \delta_2 b) \nabla r + \delta_3 r \nabla b + r (1-\delta_2 b)\nabla V),
\end{align*}
with zero flux conditions at the boundary. We note that the equation can be cast into a transport part and a remainder part, i.e.,
\begin{align*}
	\frac{\partial r}{\partial t}  = \nabla \cdot \left(r [\nabla \left(\log(r) + \delta_1 r + \delta_3 b\right) + (1-\delta_2 b)\nabla V]   - \delta_2 b \nabla r \right).
\end{align*}
This formulation is the basis of the finite volume scheme used for the following section. The scheme is based on the schemes studied in \cite{carrillo2018fvconvergence, CHS18, bessemoulin2012finite}. For simplicity we introduce it here in one space dimension but an extension to two (or more) dimensions is immediate. We note that we could have used a different formulation to define the numerical fluxes, for example, based on \eqref{grad_flow_general} with entropy \eqref{entropy_fp}. However, the formulation chosen by us, is somewhat more natural, since it clearly highlights the transport part. \\

In order to discretise the spatial domain, $\Omega = (-L,L)$, for some $L>0$, we introduce the computational mesh consisting of the control volumes $C_i=[x_\imhalf, x_\ihalf)$, for all $i \in I:=\{1,\ldots, N\}$. The measure of each control volume is given by $|C_i| = \Delta x_i = x_{i+1/2} - x_{i-1/2}>0$, for all $i\in I$. Note that $x_{1/2}=-L$, and $x_{N+1/2}=L$. We also define $x_i = (x_{i+1/2} + x_{i-1/2})/2$ the centre of cell $C_i$ and set $\Delta x_{i+1/2}=x_{i+1}-x_i$ for  $i=1,\ldots,N-1$. 

Next, we discretise the initial data by computing the cell averages of the continuous initial data on each cell, i.e.,
\begin{align*}
	r_i^0 := \frac{1}{\Delta x_i}\int_{C_i} r_{\mathrm{init}}(x)\,\d x,
\end{align*}
for all $i\in I$. Upon integrating the equation over $[t^n, t^{n+1}) \times C_i$, we obtain the following finite volume approximation
\label{eq:scheme}
\begin{align}
	\label{eq:scheme_evol}
	\frac{r_i^{n+1} - r_i^n}{\Delta t} = - \frac{\mF_{\ihalf}^n - \mF_{i-1/2}^n}{\Delta x_i},
\end{align}
for $i\in I$. Here the numerical fluxes are given by
\begin{align*}
	\label{eq:numerical_fluxes}
	\mF_{\ihalf}^n &= r_i^n \left[( -\d \xi)_{\ihalf,+} + (1 -\delta_2 \,b_{\ihalf}\,) \left(-\frac{V_{i+1} - V_i}{\Delta x_\ihalf}\right)_+\right]\\
	&\quad + r_{i+1}^{n} \left[(-\d \xi)_{\ihalf,-} + (1 -\delta_2 \,b_{\ihalf}\,) \left(-\frac{V_{i+1} - V_i}{\Delta x_\ihalf}\right)_-\right]\\
	&\quad + \delta_2 \,b_{\ihalf}\, \frac{r_{i+1}^n - r_i^n}{\Delta x_{\ihalf}},
\end{align*}
for $i = 1,\ldots, N-1$, and 
\begin{align*}
	b_\ihalf = b(x_\ihalf),	
\end{align*}
 as well as
\begin{align*}
	\xi_i^n := \log(r_i^n + \varepsilon) \,+\, \delta_1 \, r_i^n \,+\, \delta_3 \, b(x_i),
\end{align*}
where $0< \varepsilon = 10^{-7} \ll 1$ is a small constant that is commonly chosen to regularise the logarithm. In addition, we impose the numerical no-flux boundary conditions $\mF_{1/2} = \mF_{N+1/2} =0$. As usual, we use $(z)_\pm$ to denote the positive (resp. negative) part of $z$, i.e.,
\begin{align*}
	(z)_+ := \mathrm{max}(z,0), \qquad \text{and} \qquad (z)_-:=\mathrm{min}(z,0).
\end{align*}

\subsection{One Dimensional Explorations}

Let us fix $\Omega = [-5,5]$ as computational domain and set $\delta_1 = 10^{-1}$ in this subsection. The immobile species is given by $b(x) = 1/\sqrt{2\sigma^2 \pi}  \exp(- x^2/(2\sigma^2))$, with $\sigma = 10^{-1}$. The active species is initially distributed according to $r_0(x) = 1/\sqrt{2 \sigma^2 \pi}  \exp(-(x + 3)^2 / (2\sigma^2))$, with $\sigma = \sqrt{1/2}$. The external potential is chosen as 
$V(x)= (7/2 - x)^2 \chi_{\{x < 7/2\}}(x) + (x - 7/2)^5 \chi_{\{x\geq 7/2\}}(x)$; cf. Figure \ref{fig:explanation_choices}. This choice forces the mobile species to penetrate the immobile species as it migrates towards the minimum of the external potential. In Figure \ref{fig:voleffects_inhibition} we present the evolution of the mobile species for three different choices of $\delta_2 = \delta_3$. It is apparent that an increase in the parameters $\delta_2= \delta_3$ leads to a more and more inhibited migration. This behaviour is perfectly physical. In fact, the parameters $\delta_i$ are directly linked to the radii of particles of the mobile (resp. immobile) species. Since it is harder for large particles to traverse other large particles the motion is increasingly slowed down.
For the second numerical exploration we choose $\tilde V(x)=x^2/2$ and assume that the mobile species is initially distributed according to $\tilde r_0(x) = \frac12 r_0(x-3) + \frac12 r_0(x+3)$, with $r_0$ as above; cf. Figure \ref{fig:explanation_choices}. Figure \ref{fig:voleffects_inhibition2} shows the migration of the two initial bumps of the mobile species towards the minimum of the external potential. Since the immobile species inhabits this region we observe a competition for space leading to an indentation in the density of the mobile species, showcasing the strong effect of finite size exclusion effects. In Figure \ref{fig:expconvoneD} we show the exponential convergence of the solution towards equilibrium.
\begin{figure}[ht!]
    \centering
    \subfigure[Initial data $r_0$ and $\tilde r_0$.]{
    \includegraphics[width=0.3\textwidth]{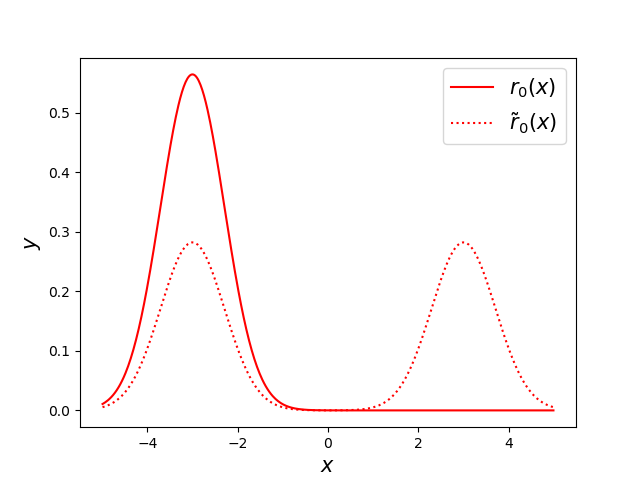}
    }
    \subfigure[External potentials $V$ and $\tilde V$.]{
    \includegraphics[width=0.3\textwidth]{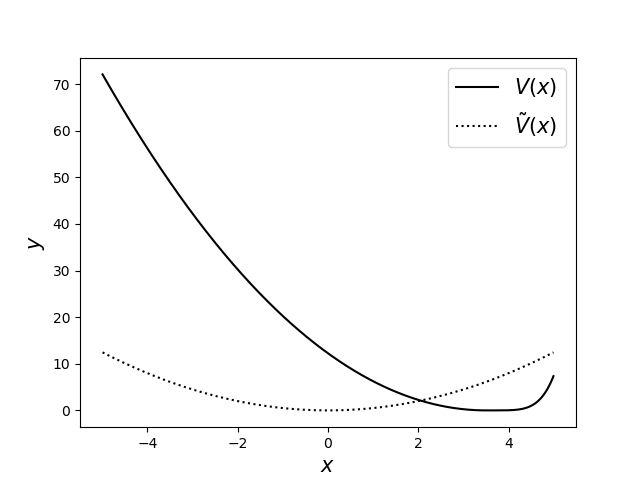}
    }
    \subfigure[Immobile species $b$.]{
    \includegraphics[width=0.3\textwidth]{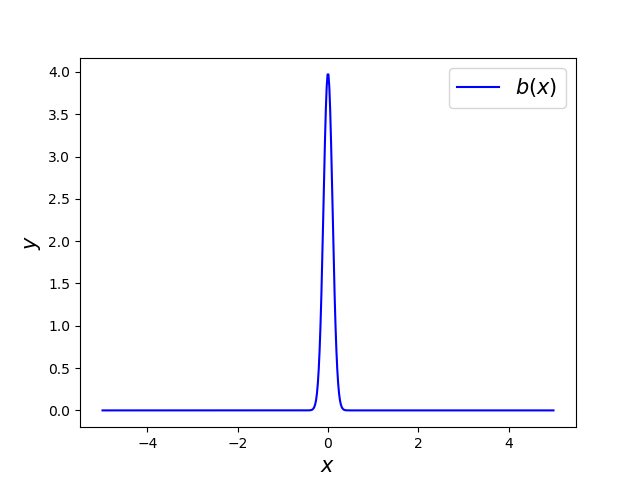}
    }
    \caption{We provide two simulations based on the same set of parameters for two types of initial data (left), external potential (centre), and a given immobile species (right).}
    \label{fig:explanation_choices}
\end{figure}

\begin{figure}[ht!]
    \centering
    \subfigure[$\delta_2= \delta_3 = 1 \times 10^{-1}$.]{
    \includegraphics[width=0.3\textwidth]{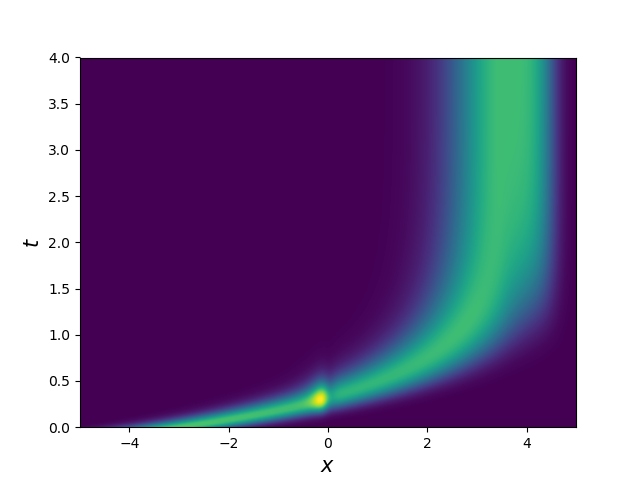}
    }
    \subfigure[$\delta_2= \delta_3 = 2 \times 10^{-1}$.]{
    \includegraphics[width=0.3\textwidth]{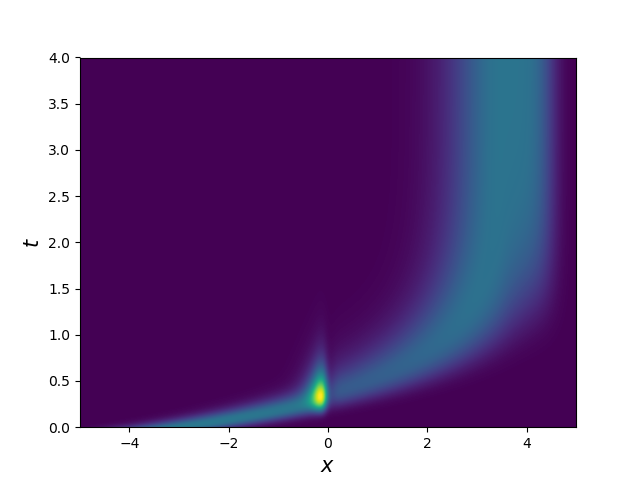}
    }
    \subfigure[$\delta_2= \delta_3 = 3 \times 10^{-1}$.]{
    \includegraphics[width=0.3\textwidth]{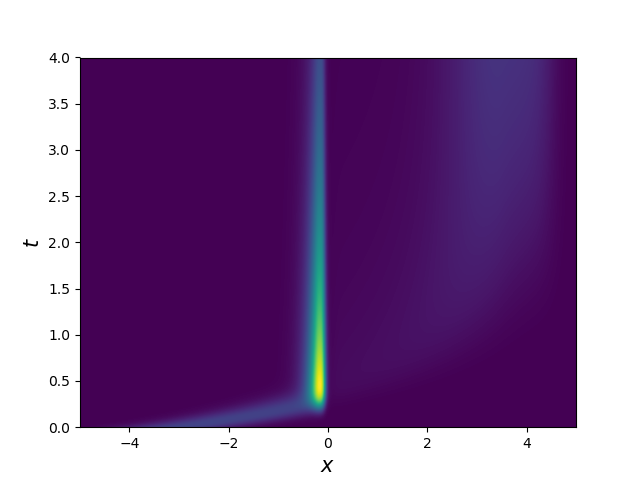}
    }
    \caption{The influence of finite size effects upon the evolution of the mobile species are displayed. At $x=0$, the maximum of the immobile species, we observe the formation of a bump of the mobile species. As the radii of red and blue particles are increased (left to right) the mobile species peaks increasingly higher as it encounters the inactive species due to the inhibited traversal caused by the size exclusion effects.}
    \label{fig:voleffects_inhibition}
\end{figure}

\begin{figure}[ht!]
    \centering
    \subfigure[$\delta_2= \delta_3 = 1 \times 10^{-1}$.]{
    \includegraphics[width=0.3\textwidth]{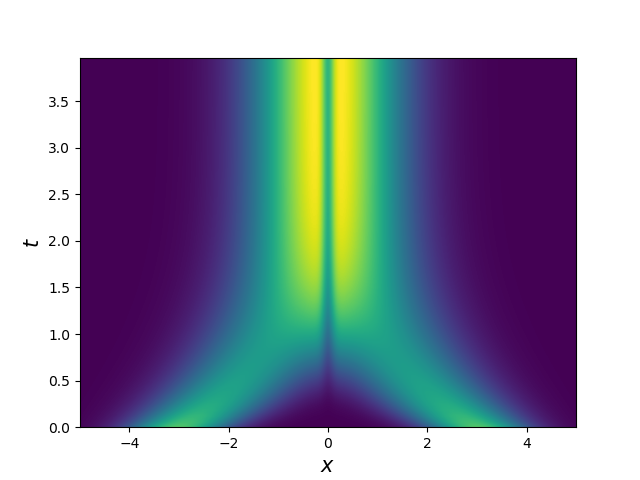}
    }
    \subfigure[$\delta_2= \delta_3 = 2 \times 10^{-1}$.]{
    \includegraphics[width=0.3\textwidth]{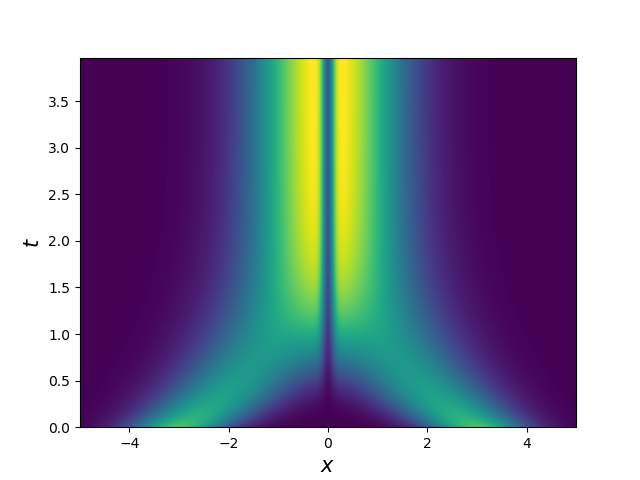}
    }
    \subfigure[$\delta_2= \delta_3 = 3 \times 10^{-1}$.]{
    \includegraphics[width=0.3\textwidth]{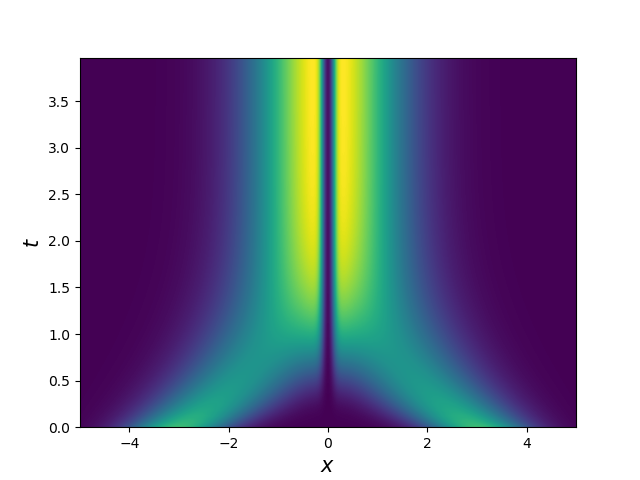}
    }
    \caption{Again, we can observe the impact of the size exclusion effect. The immobile species is located at $x=0$. As we increase the size of the particles of both species (left to right) the mobile species attains its maximum at a lower level. This is in perfect agreement with the physial derivation of the model as the size exclusion affects the density in regions of coexistence, i.e., around $x=0$.}
    \label{fig:voleffects_inhibition2}
\end{figure}

\begin{figure}
    \centering
    \includegraphics[width=0.6\textwidth]{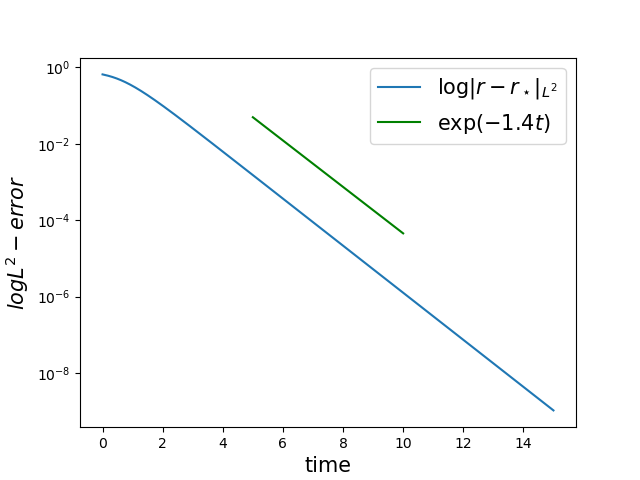}
    \caption{The $L^2$-distance (blue) between the evolution and the (numerically computed) minimiser $u^\star$ on a semi-log scale. A line of slope $-1.4$ (green) is superimposed to highlight the exponential convergence rate. The parameter choice corresponds to that of Figure \ref{fig:voleffects_inhibition2} (a).}
    \label{fig:expconvoneD}
\end{figure}

\subsection{Two Dimensional Explorations}

In this subsection we set $\Omega = (-1.75,1.75)^2$ and $\delta_2 = \delta_3 = 10^{-1}$. Note that this way both parameters are in the physical range in the sense of \cite{Bruna:2012wu}.

\subsubsection{Immobile species as porous medium}

In this example we assume the passive species is spread `heterogeneously' according to the distribution $b(x,y) = N^{-1}(1 + \cos(5 x) + \cos(5y))$, where $N$ is chosen such that $\int_{\Omega} b(x,y)\, dxdy = 1$. Initially the active species is spread around the origin, i.e., $r_0(x,y)=\frac{1}{\sqrt{2 \pi \sigma^2}}\exp\left(- \frac{r^2}{2 \sigma^2}\right)$, with $\sigma^2 = 0.5$. Moreover, we also choose $\delta_1 = 10^{-1}$ and we assume the absence of any external forces. Thus the dynamics are dictated only by the internal dispersal of the mobile species and its interaction with the immobile species. As the time evolution continues the volume exclusion effects imposed on the active species by the passive species become visible. In the final state, complementary regions are occupied by the passive species and the active species, respectively; cf. Figure \ref{fig:sim_4_evolution}.
\begin{figure}[ht!]
	\centering
	\includegraphics[width=0.4\textwidth]{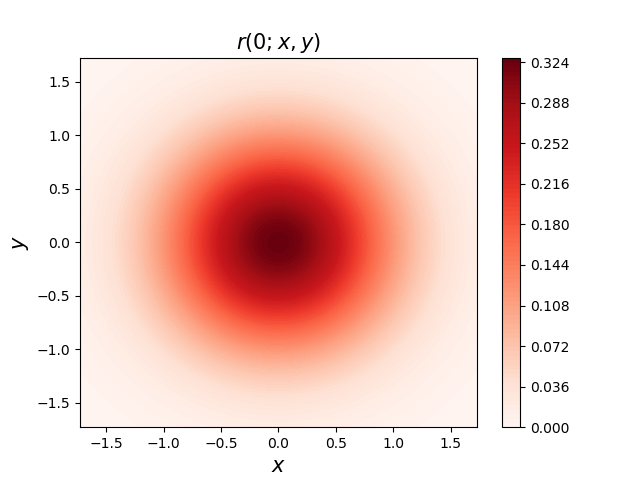}
	\includegraphics[width=0.4\textwidth]{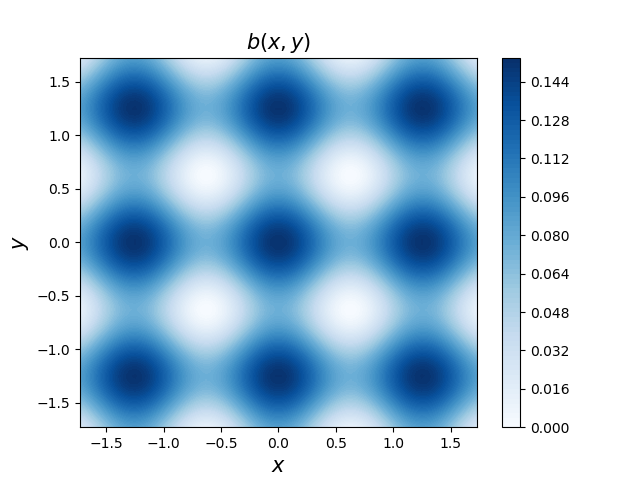}
	\includegraphics[width=0.4\textwidth]{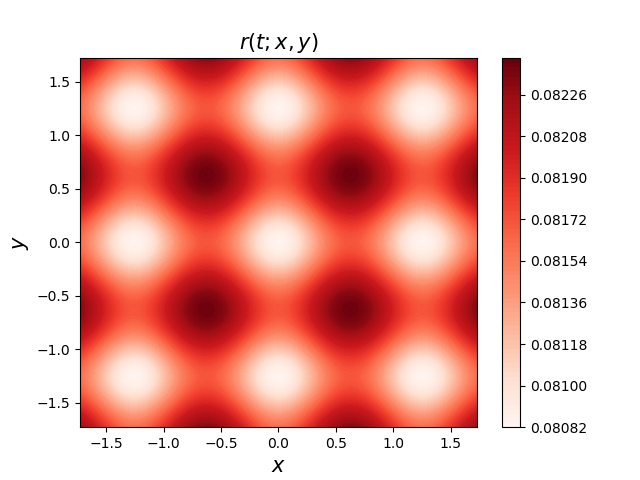}
	\includegraphics[width=0.4\textwidth]{sim_4_b_immobile}
	\caption{Initially the active species (red, left) is spread around the origin while the passive species is spread `heterogeneously' (blue, right). As the time evolution continues the volume exclusion effects imposed on the active species by the passive species become visible. In the final state, complementary regions are occupied by the passive species and the active species, respectively.}
	\label{fig:sim_4_evolution}
\end{figure}
After some time the evolution slows down as the mobile species reaches a stationary state. In Figure \ref{fig:sim_4_asymptotics} we show the numerical long-time asymptotics on a semi-log scale. It can be seen that the dynamic relaxes to the numerically computed stationary state at an exponential rate.
\begin{figure}
	\centering
	\includegraphics[width=0.6\textwidth]{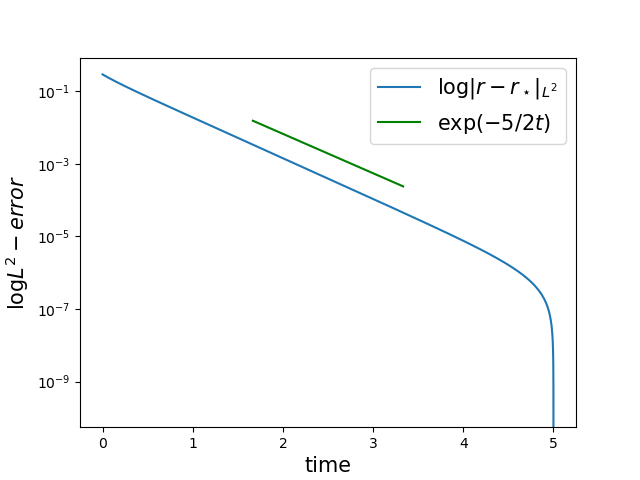}
	\caption{The $L^2$-distance (blue) between the evolution and the (numerically computed) minimiser $u^\star$ on a semi-log scale. A line of slope $-5/2$ (green) is superimposed to highlight the exponential convergence rate.}
	\label{fig:sim_4_asymptotics}
\end{figure}

\subsubsection{Immobile species as barrier}

In this section we present another stunning example of how the immobile species can inhibit the evolution of the mobile species. We set $\delta_1 = 10^{-2}$ and consider the initial datum
\begin{align*}
    r_0(x,y)=\frac{1}{\sqrt{2 \pi \sigma^2}}\exp\left(- \frac{(x+3/4)^2 + (y+3/4)^2}{2 \sigma^2}\right),
\end{align*}
with $\sigma = 0.3$. The immobile species is fixed and given by
\begin{align*}
    b(x,y) = &N^{-1}\left(\sin\left(5 \rho(x,y) \right)\right) \chi_{\{-7/4 +7\pi /5 \leq \rho(x,y) \leq -7/4 + 7.5\pi /5\}}\\[0.3em]
			&\times 
				(1 + \cos(4\pi/3(y+1)))\chi_{\{-1\leq y \leq 1\}}(x,y)\\[0.3em]
			&\times 
				(1 + \cos(4\pi/3(x+1)))\chi_{\{-1\leq x \leq 1\}}(x,y),
\end{align*}
where $\rho(x,y) = \sqrt{(x+7/4)^2 + (y+7/4)^2}$ and $N$ normalises the mass to one; cf. Figure \ref{fig:sim_8_b_acts_as_barrier}.
\begin{figure}
	\centering
	\includegraphics[width=0.45\textwidth]{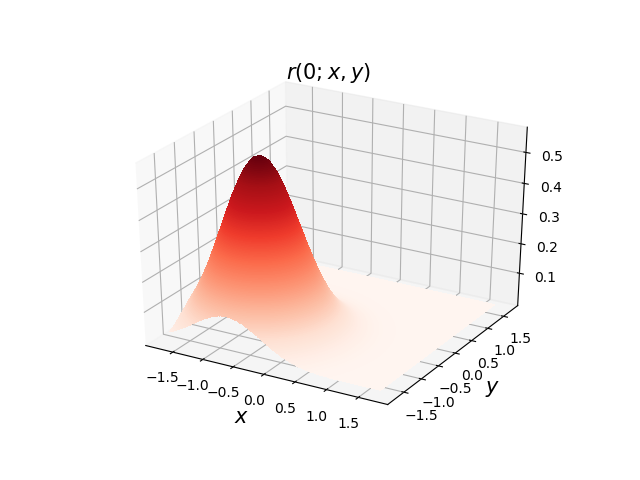}
	\includegraphics[width=0.45\textwidth]{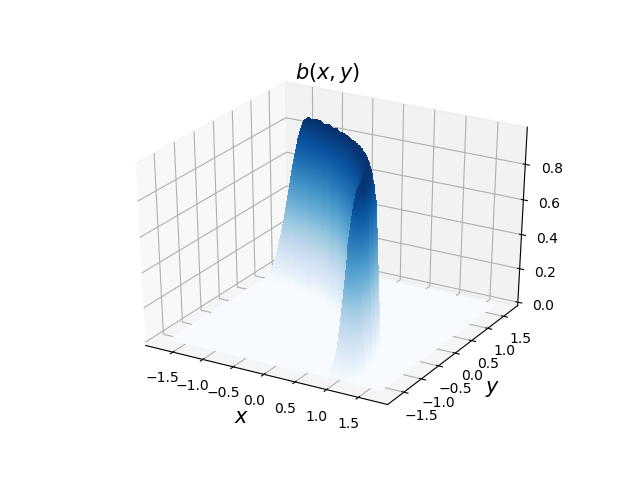}
	\caption{Initial data of the active species (red, left) and the immobile species (blue, right) acting as a barrier.}
	\label{fig:sim_8_b_acts_as_barrier}
\end{figure}
\newpage
Below we consider two different choices of external potentials: $V_\mathrm{s}(x) =  \frac1{20} \left((x-7/4)^2 + (y-7/4)^2\right)$ or $V_\mathrm{w}(x) = 1/10 \, V_\mathrm{s}(x)$. In the subsequent simulations the immobile species acts as a barrier as the active species tries to reach the minimum of the external potential centred at the upper right corner of the domain. 

In Figure \ref{fig:barrier_strong_potential} it can be observed that the motion of the active species is slowed down and that its density is lower in the region occupied by the passive species compared to the unoccupied regions. Since the potential is relatively strong the active species moves through the immobile one only slightly changing its shape. In Figure \ref{fig:barrier_weak_potential} we rescaled the potential by a factor of ten. 
Again, we see that the motion of the mobile species is slowed down and that its density is reduced in the region occupied by the passive species in comparison with unoccupied regions. Since, now,  the potential is relatively weak the motion of the active species is inhibited by the passive species which incentivises circumnavigating the immobile species instead of penetrating it. This behaviour is reflected in the numerical simulation; cf. Figure \ref{fig:barrier_weak_potential}. Compared to Figure \ref{fig:barrier_strong_potential} the active species closes in from the side as it approaches  the minimum of $V$, rather than directly moving into it.

\begin{figure}
	\centering
	\subfigure[$t=0$]{
		\includegraphics[width=0.3\textwidth]{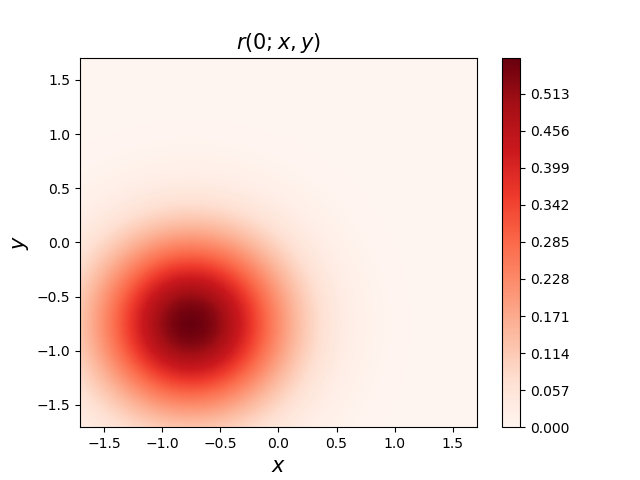}
	}
	\subfigure[$t=0.01$]{
		\includegraphics[width=0.3\textwidth]{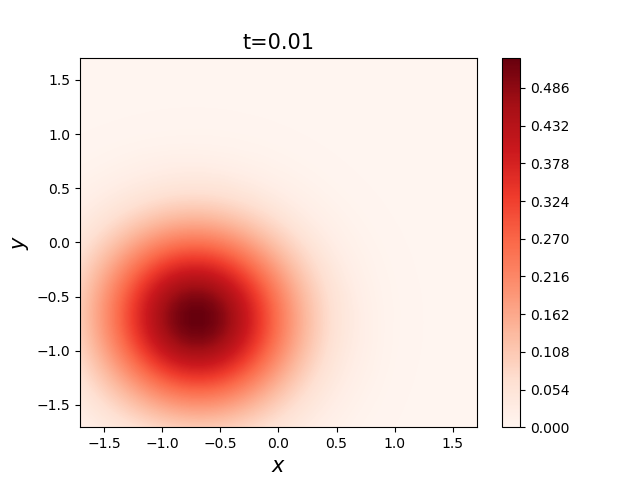}
	}
	\subfigure[$t=0.05$]{
		\includegraphics[width=0.3\textwidth]{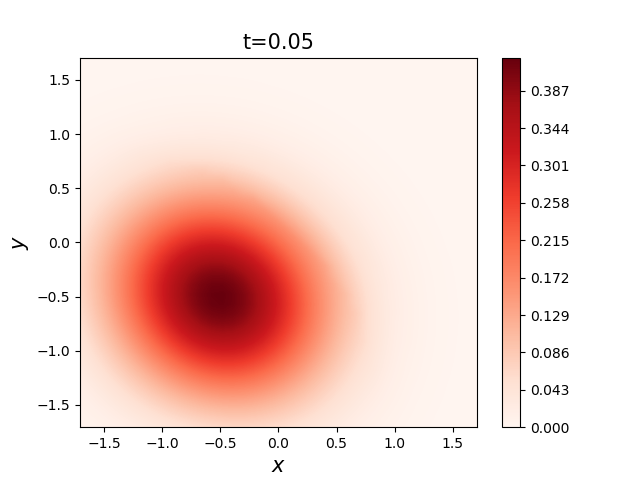}
	}
	\subfigure[$t=0.1$]{
		\includegraphics[width=0.3\textwidth]{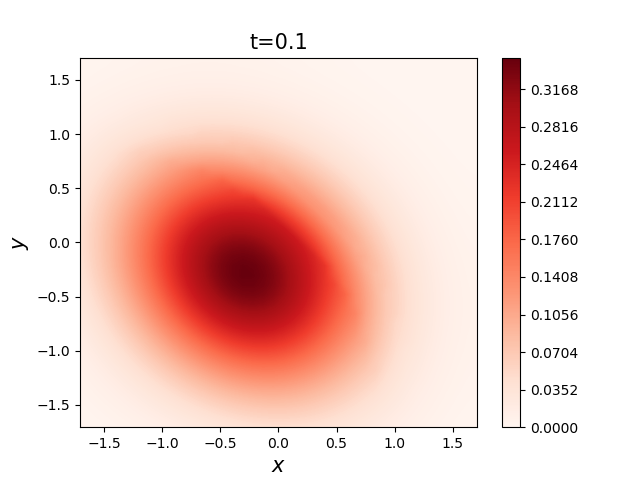}
	}
	\subfigure[$t=0.15$]{
		\includegraphics[width=0.3\textwidth]{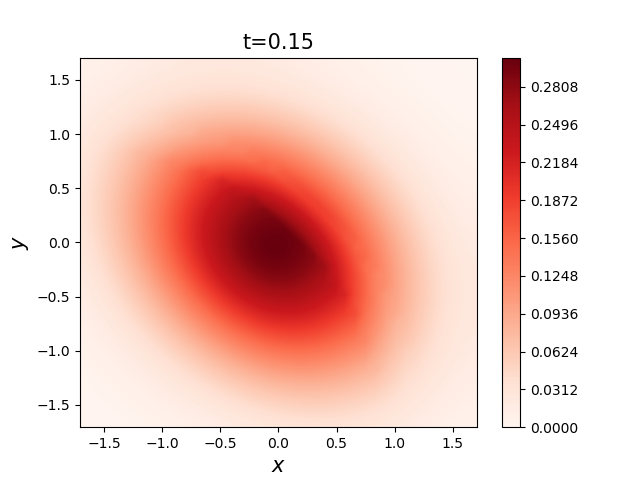}
	}
	\subfigure[$t=0.2$]{
		\includegraphics[width=0.3\textwidth]{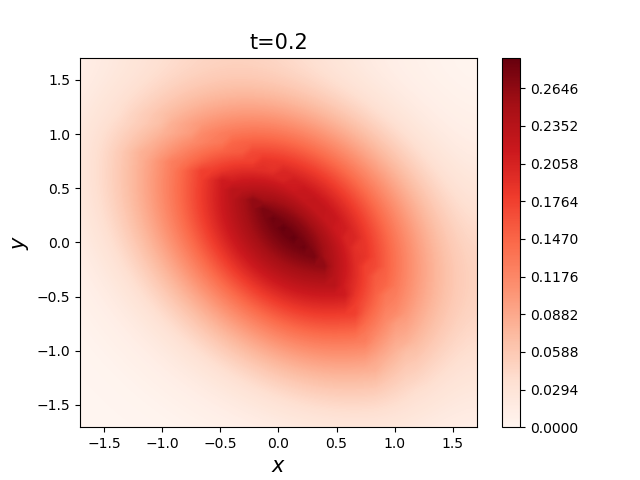}
	}
	\subfigure[$t=0.25$]{
		\includegraphics[width=0.3\textwidth]{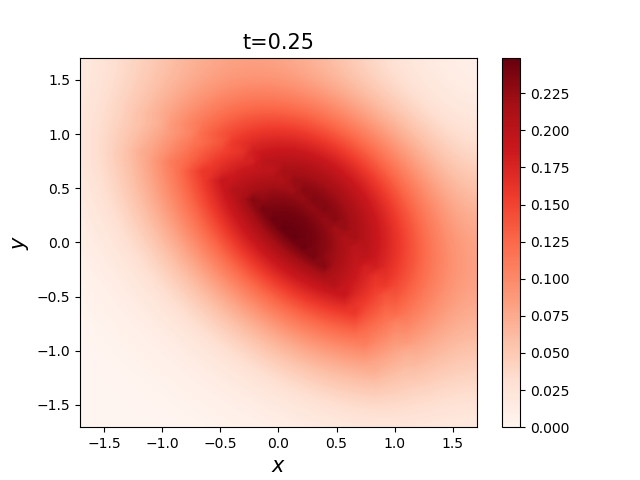}
	}
	\subfigure[$t=0.3$]{
		\includegraphics[width=0.3\textwidth]{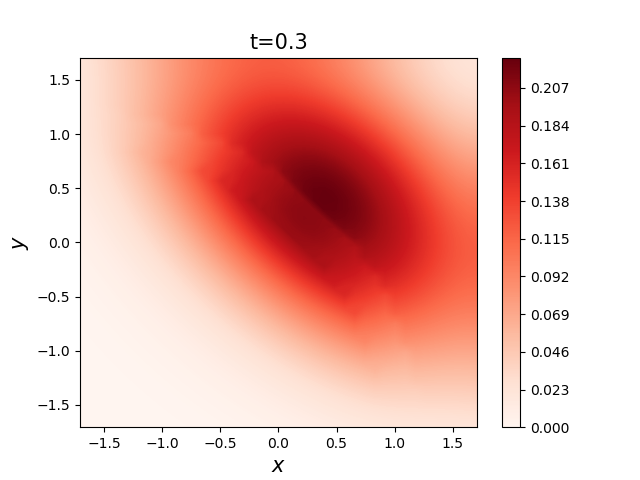}
	}
	\subfigure[$t=0.35$]{
		\includegraphics[width=0.3\textwidth]{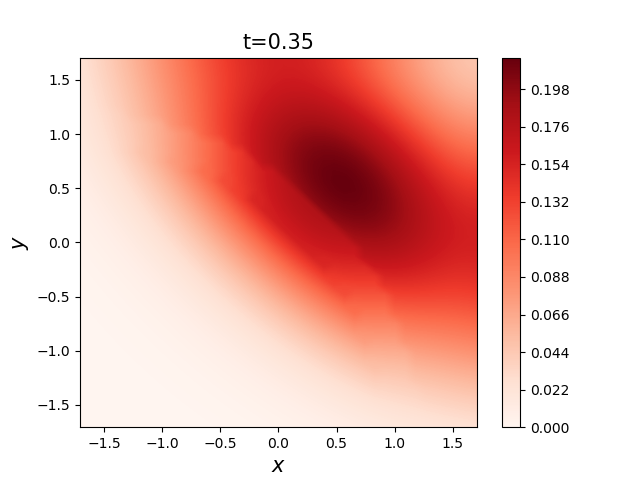}
	}
	\subfigure[$t=0.4$]{
		\includegraphics[width=0.3\textwidth]{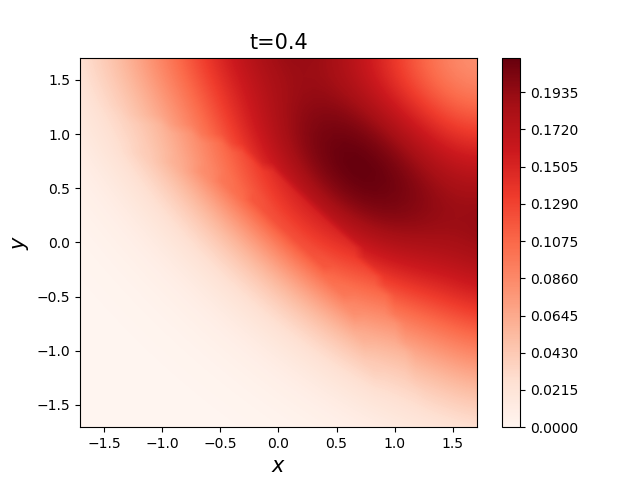}
	}
	\subfigure[$t=0.45$]{
		\includegraphics[width=0.3\textwidth]{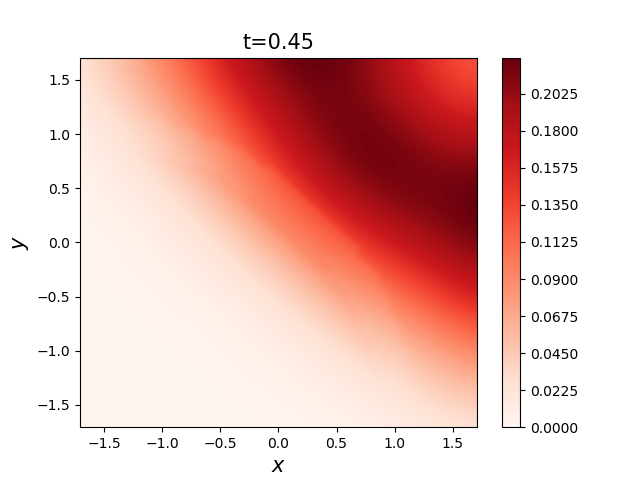}
	}
	\subfigure[$t=0.5$]{
		\includegraphics[width=0.3\textwidth]{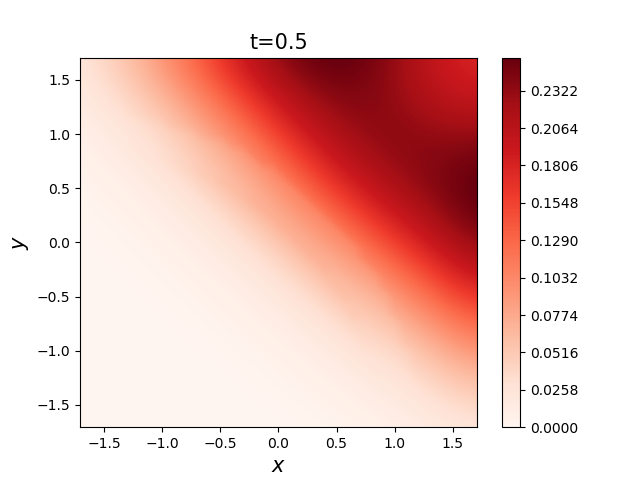}
	}
	\caption{Evolution of the mobile species, $r$, with $b$ acting as a barrier for the potential $V= V_\mathrm{s}$.}
	\label{fig:barrier_strong_potential}
\end{figure}

\begin{figure}
	\centering
	\subfigure[$t=0$]{
		\includegraphics[width=0.3\textwidth]{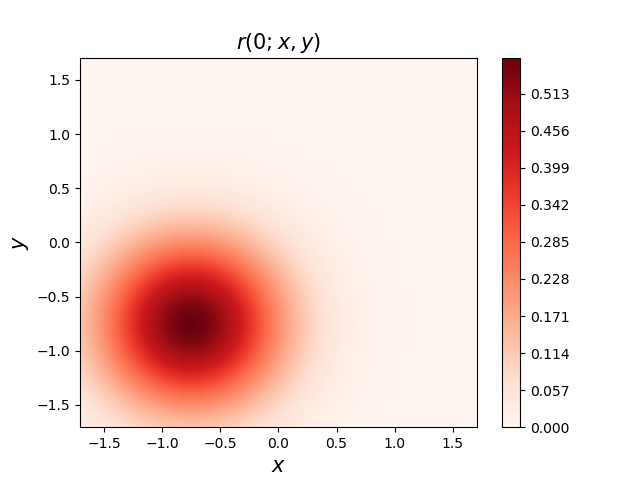}
	}
	\subfigure[$t=0.1$]{
		\includegraphics[width=0.3\textwidth]{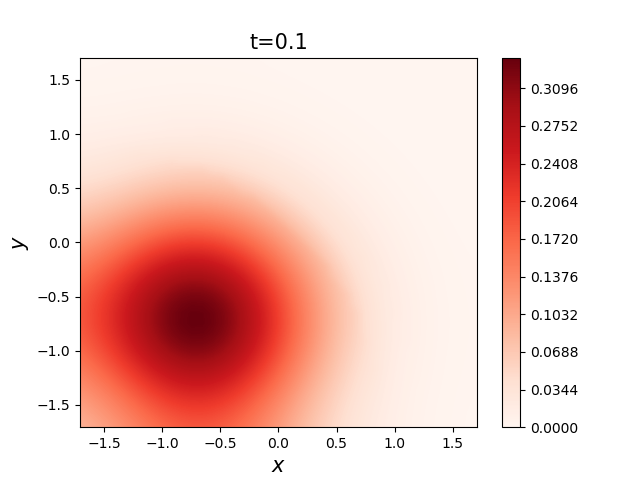}
	}
	\subfigure[$t=0.3$]{
		\includegraphics[width=0.3\textwidth]{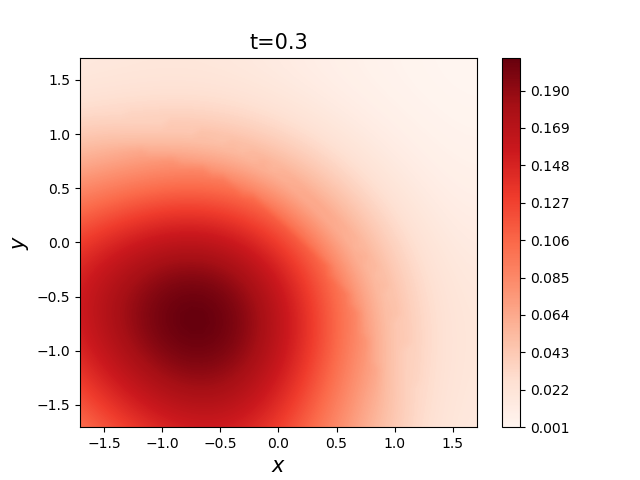}
	}
	\subfigure[$t=0.4$]{
		\includegraphics[width=0.3\textwidth]{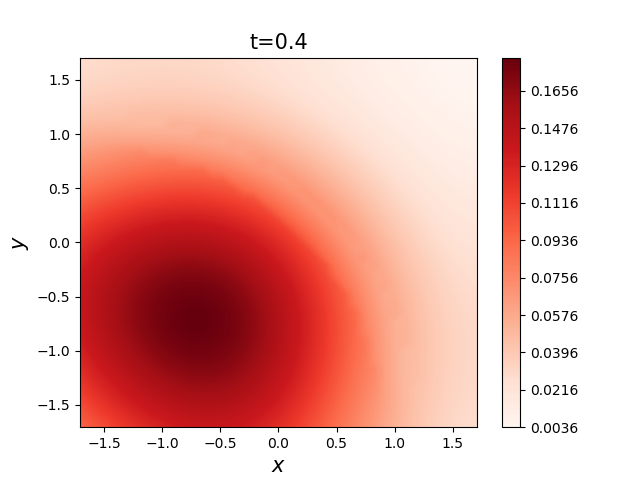}
	}
	\subfigure[$t=0.6$]{
		\includegraphics[width=0.3\textwidth]{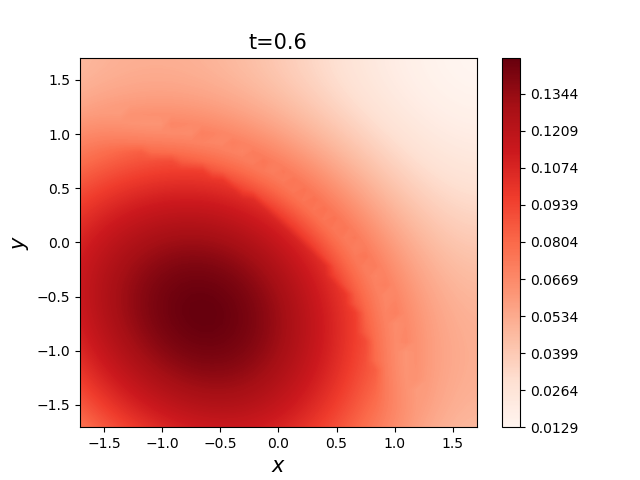}
	}
	\subfigure[$t=0.7$]{
		\includegraphics[width=0.3\textwidth]{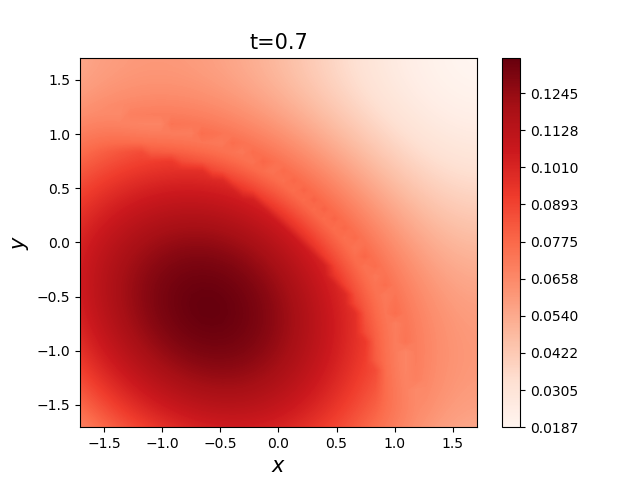}
	}
	\subfigure[$t=0.8$]{
		\includegraphics[width=0.3\textwidth]{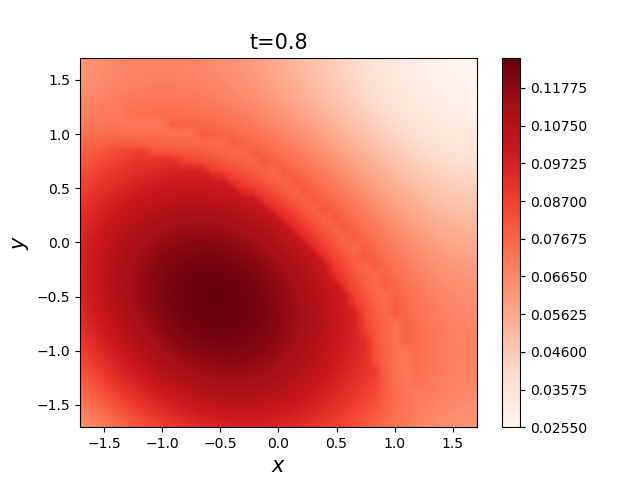}
	}
	\subfigure[$t=0.9$]{
		\includegraphics[width=0.3\textwidth]{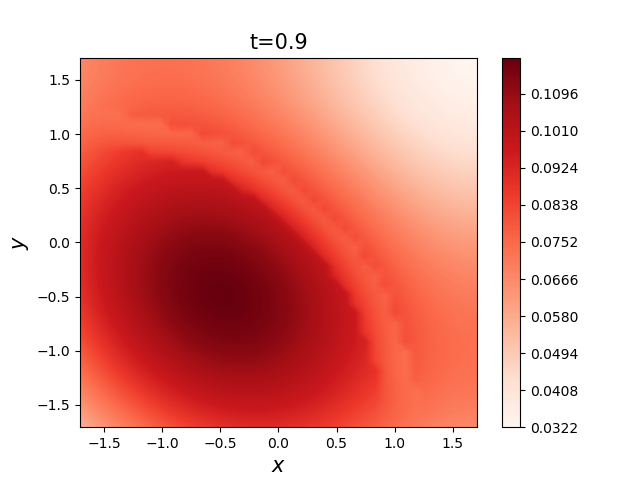}
	}
	\subfigure[$t=1.0$]{
		\includegraphics[width=0.3\textwidth]{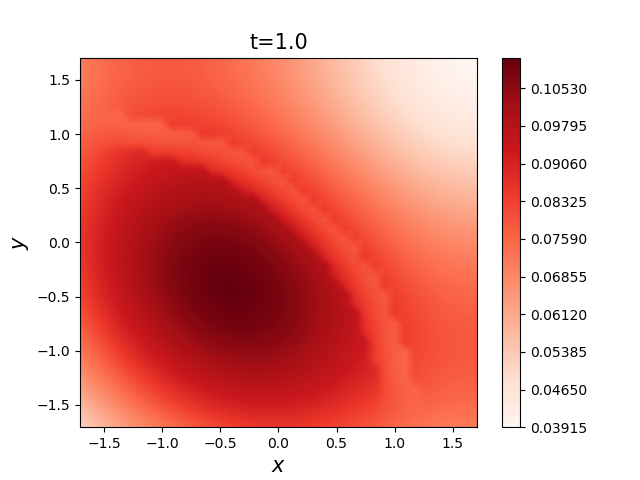}
	}
	\subfigure[$t=1.1$]{
		\includegraphics[width=0.3\textwidth]{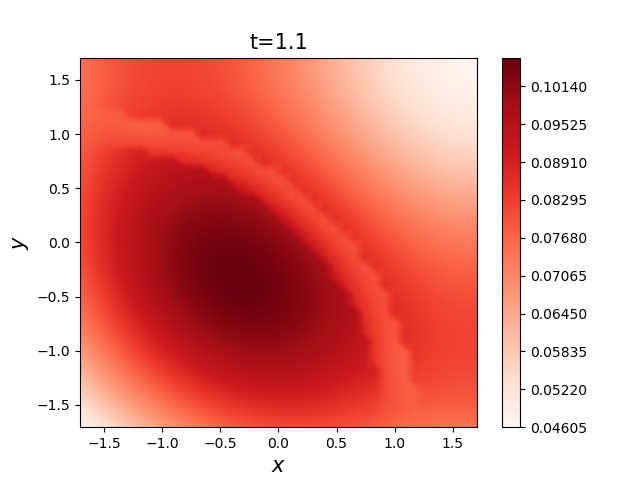}
	}
	\subfigure[$t=1.2$]{
		\includegraphics[width=0.3\textwidth]{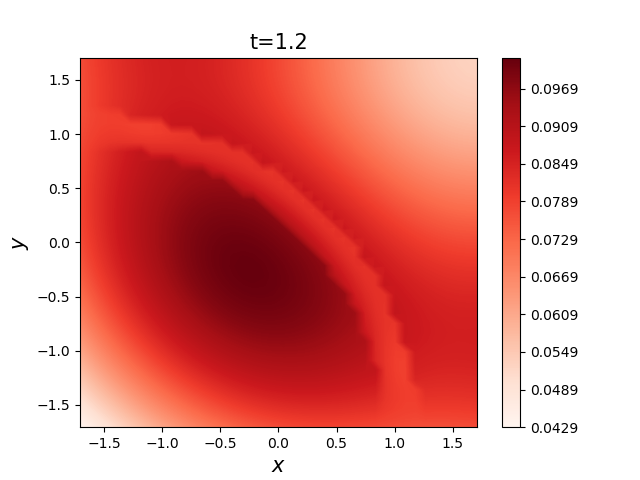}
	}
	\subfigure[$t=1.3$]{
		\includegraphics[width=0.3\textwidth]{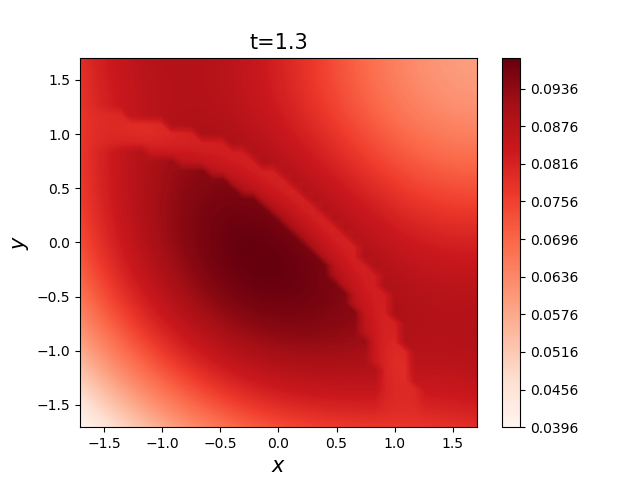}
	}
	\caption{The active species is incentivised to move around the passive species.}
	\label{fig:barrier_weak_potential}
\end{figure}

\FloatBarrier


\textbf{Acknowledgements.\,}
The basic ideas of this paper were developed at the Oxford working group meeting \quot{Asymptotic gradient flows} in 2017. The authors are very grateful to Maria Bruna, Jos\'e Antonio Carrillo, Jon Chapman and Ende S\"uli for the inspiring and valuable discussions during this meeting. M.T. Wolfram acknowledges partial support by the Austrian Academy of
Sciences via the New Frontier's Group NST-001 and by the EPSRC vis the First grant EP/P01240X/1. H. Ranetbauer was funded by the Austrian Science Fund (FWF) project F 65.


\bibliographystyle{amsplain}

\end{document}